\newtheorem*{theoA}{Theorem A}
\newtheorem*{theoB}{Theorem B}
\newtheorem{theo}{Theorem}[section]
\newtheorem{prop}[theo]{Proposition}
\newtheorem{defi}[theo]{Definition}
\newtheorem{lemm}[theo]{Lemma}
\newtheorem{coro}[theo]{Corollary}
\theoremstyle{definition}
\newtheorem{rema}[theo]{Remark}
\newtheorem{exam}[theo]{Example}
\newcommand{\mc}{\mathcal}
\newcommand{\mf}{\mathfrak}
\newcommand{\mbf}{\mathbf}
\newcommand{\ra}{\rightarrow}
\newcommand{\mrm}{\mathrm}
\DeclareMathOperator{\Res}{Res}
\DeclareMathOperator{\Ad}{Ad}
\DeclareMathOperator{\Ext}{Ext}
\DeclareMathOperator{\Hom}{Hom}
\DeclareMathOperator{\Ind}{Ind}
\DeclareMathOperator{\GL}{GL}
\DeclareMathOperator{\soc}{soc}
\DeclareMathOperator{\an}{an}
\DeclareMathOperator{\sm}{sm}
\DeclareMathOperator{\bal}{bal}
\DeclareMathOperator{\alg}{alg}
\DeclareMathOperator{\lp}{lp}
\DeclareMathOperator{\pol}{pol}
\DeclareMathOperator{\ad}{ad}
\title{An adjunction formula for the Emerton--Jacquet functor}
\date {\today}
\author{John Bergdall and Przemys\l aw Chojecki}
\address{John Bergdall\\Department of Mathematics and Statistics \\ Boston University \\ 111 Cummington Mall \\ Boston, MA 02215\\USA}
\email{bergdall@math.bu.edu}
\urladdr{http://math.bu.edu/people/bergdall}
\address{Przemys\l aw Chojecki\\Mathematical Institute\\
University of Oxford\\
Andrew Wiles Building\\
Radcliffe Observatory Quarter\\
Woodstock Road\\
Oxford OX2 6GG\\
England}
\email{chojecki@maths.ox.ac.uk}
\urladdr{http://people.maths.ox.ac.uk/chojecki/}
\thanks{The authors would like to thank David Hansen, Keenan Kidwell, Sascha Orlik, Matthias Strauch and an anonymous referee for helpful comments and suggestions. We especially M.\ Strauch who kindly pointed out an embarrassing error in an earlier version of this paper and for discussing a number of revisions with the first author. Some of this work was carried out while one of us (J.B.) visited the other (P.C.) in Oxford. It is our pleasure to thank the mathematics department of the University of Oxford for extraordinary hospitality during this visit. The first-named author's research was partially supported by NSF award DMS-1402005. The second-named author's research was partially funded by EPSRC grant EP/L005190/1}
\subjclass[2000]{22E50, 11F85 (20G25, 11F33).}
\begin{document}

\begin{abstract}
The Emerton--Jacquet functor is a tool for studying locally analytic representations of $p$-adic Lie groups. It provides a way to access the theory of $p$-adic automorphic forms. Here we give an adjunction formula for the Emerton--Jacquet functor, relating it directly to locally analytic inductions, under a strict hypothesis that we call {\em non-critical}. We also further study the relationship to socles of principal series in the non-critical setting.
\end{abstract}

\maketitle

\addtocontents{toc}{\protect\setcounter{tocdepth}{1}}
\tableofcontents

\section{Introduction}
Let $p$ be a prime. Throughout this paper we fix a finite extension $K/\mbf Q_p$ and let $\mbf G$ be a connected, reductive and split algebraic group defined over $K$. We fix a maximal split torus $\mbf T$ and a Borel subgroup $\mbf B$ containing $\mbf T$. We also let $\mbf P$ denote a standard parabolic containing $\mbf B$. The parabolic subgroup $\mbf P$ has a Levi decomposition $\mbf P = \mbf N_{\mbf P} \mbf L_{\mbf P}$, with $\mbf N_{\mbf P}$ being the maximal unipotent subgroup of $\mbf P$. We denote the opposite subgroup of $\mbf{P}$ by $\mbf P^-$. If $\mbf H$ is an algebraic group defined over $K$ we use the Roman letters $H := \mbf H(K)$ to denote the corresponding $p$-adic Lie group which we consider as a locally $\mathbf Q_p$-analytic group (not as a locally $K$-analytic group). Let $L$ be another finite extension of $\mbf Q_p$, which we will use as a field of coefficients.

\subsection{The Emerton-Jacquet functor}
If $V$ is a smooth and admissible $L$-linear representation of $G$ then one can associate a smooth and admissible $L$-linear representation $J_P(V):= V_{N_P}$ of the Levi factor $L_P$, called the Jacquet module of $V$. The functor $J_P(-)$ is exact, and it turns out that the irreducible constituents of $J_P(V)$ give rise to  irreducible constituents of $V$ via adjunction with smooth parabolic induction $\Ind_{P^-}^G(-)^{\sm}$. Specifically, if $U$ is a smooth and admissible representation of $L_P$, seen as a representation of $P^{-}$ via inflation, then there is a natural isomorphism
\begin{equation}\label{eqn:old-jacquet-adjunction}
\Hom_{G}(\Ind_{P^-}^G(U)^{\sm}, V) \overset{\simeq}{\longrightarrow} \Hom_{L_P}(U(\delta_P), J_P(V)),
\end{equation}
where $\delta_P$ is the modulus character of $P$.

In \cite{em2}, Emerton extended the functor $J_P(-)$ to certain categories of locally analytic representations of $G$ on $L$-vector spaces (throughout the paper, locally analytic refers to locally $\mathbf Q_p$-analytic). We will call this extended functor the Emerton--Jacquet functor but still denote it by $J_P(-)$. If $U$ is a suitable locally analytic representation of $L_P$ and $V$ is a suitable locally analytic representation of $G$, one could ask for an adjunction formula in the spirit of \eqref{eqn:old-jacquet-adjunction}, relating $\Hom_{L_P}(U(\delta_P), J_P(V))$ to $\Hom_G(\Ind_{P^-}^G(U)^{\an}, V)$. Here, $\Ind_{P^-}^G(-)^{\an}$ is the locally analytic induction and suitable refers to, for example, the hypotheses in the introduction of \cite{em3}. The $\Hom$-spaces are meant to be continuous morphism (a convention that remains throughout the paper).

It was explained by Emerton that the na\"ive generalisation of \eqref{eqn:old-jacquet-adjunction} is not generally correct. Indeed, the main result of Emerton's paper \cite{em3} is an isomorphism
\begin{equation}\label{eqn:new-adjunction}
\Hom_G(I_{P^-}^G(U), V) \overset{\simeq}{\longrightarrow} \Hom_{L_P}(U(\delta_P),J_P(V))^{\bal}
\end{equation}
where $I_{P^-}^G(U)$ is a certain {\em subrepresentation} of $\Ind_{P^-}^G(U)^{\an}$ and $\Hom_{L_P}(U(\delta_P),J_P(V))^{\bal}$ is the ``balanced'' subspace of $\Hom_{L_P}(U(\delta_P),J_P(V))$ (see \cite[Definition 0.8]{em3}). 

Breuil showed in \cite{br} that one can remove the balanced condition on the right-hand side of  Emerton's formula \eqref{eqn:new-adjunction} at the expense of replacing the locally analytic induction $\Ind_{P^-}^G(U)^{\an}$, or its subrepresentation $I_{P^-}^G(U)$, with a closely related locally analytic representation defined and studied by Orlik and Strauch \cite{os-old} (we recall work of Orlik--Strauch and Breuil in Section \ref{section:socle}).

\subsection{Statement of theorem}

Our main goal is to give sufficient practical conditions under which a na\"ive adjunction formula holds. Let us state our main theorem, and comment on the hypotheses (especially the notion of non-critical) afterwards. Below German fraktur letters refer to Lie algebras. See Section \ref{subsec:notations} for all the notations.

\begin{theoA}[Theorem \ref{theo:adj-theo}]\label{theoremA}
Suppose that $V$ is a very strongly admissible locally analytic representation of $G$ which is f-$\mf p$-acyclic, $U$ is a finite-dimensional locally analytic representation of $L_P$ which is irreducible as a module over $\mrm U({\mf l_{P,L}})$ and a sum of characters when restricted to $\mf t_L$, and $\pi$ is a finite length smooth representation of $L_P$ which admits a central character. Then, if $(U,\pi)$ is non-critical with respect to $V$, there is a canonical isomorphism
\begin{equation*}
\Hom_G(\Ind_{P^-}^G(U\otimes \pi(\delta_P^{-1}))^{\an}, V) \overset{\simeq}{\longrightarrow} \Hom_{L_P}(U\otimes \pi, J_P(V)).
\end{equation*}
\end{theoA}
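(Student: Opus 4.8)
The plan is to build on Emerton's adjunction formula \eqref{eqn:new-adjunction} and Breuil's refinement, reducing the proposed "naïve" adjunction to a comparison of three things: the balanced subspace, the full Hom-space, and the distinction between $\Ind_{P^-}^G(U \otimes \pi(\delta_P^{-1}))^{\an}$ and its Orlik–Strauch-type subrepresentation. First I would unpack the right-hand side. Since $V$ is very strongly admissible and f-$\mf p$-acyclic, the Emerton–Jacquet module $J_P(V)$ is a suitable essentially admissible locally analytic representation of $L_P$, and I would apply \eqref{eqn:new-adjunction} with the inducing object $W := U \otimes \pi(\delta_P^{-1})$ to get $\Hom_G(I_{P^-}^G(W), V) \simeq \Hom_{L_P}(W(\delta_P), J_P(V))^{\bal} = \Hom_{L_P}(U \otimes \pi, J_P(V))^{\bal}$. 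So the target isomorphism splits into two claims: (a) under the non-criticality hypothesis, $\Hom_{L_P}(U \otimes \pi, J_P(V))^{\bal} = \Hom_{L_P}(U \otimes \pi, J_P(V))$, i.e. every $L_P$-equivariant map is automatically balanced; and (b) again under non-criticality, the inclusion $I_{P^-}^G(W) \hookrightarrow \Ind_{P^-}^G(W)^{\an}$ induces an isomorphism on $\Hom_G(-, V)$, i.e. the "missing" constituents of the full induction do not contribute maps to $V$.

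For claim (a), I would use the explicit description of the balanced condition from \cite[Definition 0.8]{em3} in terms of the action of the center (or of $\mf l_P$/$\mf t$) and the weights appearing. Because $U$ is irreducible over $\mrm U(\mf l_{P,L})$ and a sum of characters over $\mf t_L$, and $\pi$ has a central character, the relevant $\mf t_L$-weights (resp. $Z_{L_P}$-characters) occurring in $W(\delta_P)$ are pinned down; the balanced condition amounts to a dominance/positivity inequality between these weights and the Hodge–Tate–Sen-type weights controlling $J_P(V)$ — and this inequality is precisely what I expect the definition of non-critical to encode. So this step should be essentially a matter of matching definitions, once non-criticality is stated in Section \ref{subsec:notations}: non-criticality forces the finitely many numerical conditions defining "balanced" to hold for every element of the Hom-space. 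I would check that the finite-length and central-character hypotheses on $\pi$ guarantee there are only finitely many such conditions and that they are uniform across the whole Hom-space.

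For claim (b), the strategy is to filter the quotient $\Ind_{P^-}^G(W)^{\an} / I_{P^-}^G(W)$ (or dually, to use the Orlik–Strauch description of the constituents of $\Ind_{P^-}^G(W)^{\an}$ recalled in Section \ref{section:socle}) and show each graded piece has no nonzero continuous map to $V$. Concretely, the constituents of the full locally analytic induction that lie outside $I_{P^-}^G(W)$ are built from $\mf g$-representations $F$ (in category $\mc O$-type data) with highest weight strictly larger than that of $U$ in the appropriate ordering, twisted by smooth inductions of $\pi$; by Frobenius reciprocity on the $\mf g$-side together with the adjunction between smooth induction and the smooth Jacquet functor, a nonzero map from such a constituent to $V$ would produce a vector in $J_P(V)$ of a $\mf t_L$-weight that non-criticality excludes. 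The main obstacle — and the technical heart of the argument — is exactly this step: one must carefully organize the Orlik–Strauch filtration of $\Ind_{P^-}^G(W)^{\an}$, control the infinitesimal characters and $\mf t_L$-weights of each graded piece, and show that f-$\mf p$-acyclicity of $V$ lets one compute $\Hom_G$ out of each piece via the Jacquet module without higher cohomological obstructions. I would expect to invoke a spectral sequence or long exact sequence argument (as in Emerton's proof of \eqref{eqn:new-adjunction}) to pass from the graded pieces back to the whole induction, and then combine (a) and (b) to conclude that the inclusion $\Hom_G(\Ind_{P^-}^G(W)^{\an}, V) \hookrightarrow \Hom_G(I_{P^-}^G(W), V) \simeq \Hom_{L_P}(U \otimes \pi, J_P(V))^{\bal} = \Hom_{L_P}(U \otimes \pi, J_P(V))$ is in fact an isomorphism, which is the claimed canonical isomorphism.
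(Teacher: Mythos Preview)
Your route is genuinely different from the paper's, and it has two real gaps.

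First, your claim (a) rests on a guess about the definition of non-critical that does not match the paper. Non-critical (Definition~\ref{defi:badness}) is \emph{not} a numerical dominance condition designed to force Emerton's balanced criterion; it is the vanishing of eigenspaces $J_P(V)^{Z(L_P)=\chi'\omega_\pi}=(0)$ for every $\chi'\neq\chi$ strongly linked to the highest weight $\chi$ of $U$ with $d\chi'\in\Lambda_{\mf p_L}^+$. There is no direct argument in the paper (and none is obvious) that this eigenspace vanishing is equivalent to, or implies, the balanced condition of \cite[Definition~0.8]{em3}. So step (a) as you wrote it is not justified.

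Second, and more seriously, the surjectivity half of your claim (b) --- that every map out of $I_{P^-}^G(W)$ extends to $\Ind_{P^-}^G(W)^{\an}$ --- is exactly where the hard work lies, and ``spectral sequence or long exact sequence'' does not capture the mechanism. The paper does not go through $I_{P^-}^G$ or the balanced subspace at all. Instead it identifies, via Proposition~\ref{prop:poly-internal-dual}, the locally polynomial functions $C^{\pol}(N_P,U)$ with the internal dual $M_{\mf p_L}(U)^\vee$, so that the two sides of the adjunction become $\Hom_{(\mf g,P)}$ out of $M_{\mf p_L}(U)^\vee\otimes\mc C_c^{\sm}(N_P,\pi(\delta_P^{-1}))$ and out of $M_{\mf p_L}(U)\otimes\mc C_c^{\sm}(N_P,\pi(\delta_P^{-1}))$ respectively, connected by the canonical map $\alpha_U:M_{\mf p_L}(U)\to M_{\mf p_L}(U)^\vee$. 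One then factors $\alpha_U$ through the simple quotient $L(U)$ and must show the two induced maps (3a), (3b) on Hom-spaces are isomorphisms. Injectivity and the isomorphism (3b) come from the composition series analysis in $\mc O^P$ (Theorem~\ref{theo:constituents}) together with non-criticality (Proposition~\ref{prop:iso-and-inj}). The genuinely delicate point is the \emph{surjectivity} of (3a): given $M\subset M'$ in $\mc O^P$ with $M'/M\simeq L(\chi')$, one must extend a map $M\otimes\mc C_c^{\sm}\to V$ across the pushout. This is done in Proposition~\ref{prop:be} and Theorem~\ref{theo:sur}: f-$\mf p$-acyclicity gives a $\mf p_L$-splitting of the relevant extension, which integrates to a splitting over a compact open $P^0$; one then twists by $U_{\chi'}^{\ast}$, takes $N_P^0$-invariants, and uses that a suitable $z\in Z(L_P)^+$ acts compactly on $(U_{\chi'}^{\ast}\otimes V^{N_P^0})^H$ to pass to eigenspaces and produce a $P$-equivariant section. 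None of this is visible in your sketch, and it is precisely here that both hypotheses (f-$\mf p$-acyclic and non-critical) are used in an essential, intertwined way.
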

The first hypothesis, that $V$ is very strongly admissible, is a relatively natural one. For example, it appears in \cite[Theorem 0.13]{em3}. We refer to Definition \ref{defi:vsa} for this and the definition of f-$\mathfrak p$-acyclic. See Example \ref{example:acyclic} for an example (the localisation of $p$-adically completed and compactly supported cohomology of modular curves at non-Eisenstein ideals is also an example; see \cite[Corollarie 5.1.3]{be}).

The hypotheses on $U$ and $\pi$ taken individually should be self-explanatory. The crucial hypothesis in Theorem A then is that the pair $(U,\pi)$ be {\em non-critical} with respect to $V$. We note immediately that there is a sufficient condition, the condition of having {\em non-critical slope}, which depends only on $U$ and $\pi$ (see Remark \ref{rema:non-critical-slope}) but the definition of non-critical is more general and depends on $V$. We will give a brief explanation here and refer to Definition \ref{defi:badness} for more details.

Suppose for the moment that $\mbf P = \mbf B$ is a Borel subgroup so that $\mbf L_{\mbf P} = \mbf T$ is the torus. The hypothesis on $U$ in Theorem A implies that $U$ is a locally analytic character $\chi$ of $T$, and we assume that $\pi$ is trivial (by absorbing $\pi$ into $\chi$). If $\chi'$ is a locally analytic character of $T$ then there is the notion of $\chi'$ being strongly linked to $\chi$ (see Definition \ref{defi:strongly-linked}) which generalizes the well-known notion of strongly linked weights coming from the representation theory of the Lie algebra of $G$ (see \cite{hum}). The Emerton--Jacquet module $J_P(V)$ for $V$ as in Theorem A has a locally analytic action of $T$.  We say that the character $\chi$ (or the pair $(\chi,1)$) is non-critical with respect to $V$ if $J_P(V)^{T=\chi'} = (0)$ for every character $\chi' \neq \chi$ strongly linked to $\chi$. The definition is inspired by the definition of ``not bad'' that appears in  \cite{be} and \cite{bcho}.

If $\mbf P$ is not a Borel subgroup, the hypothesis on $U$ still allows us to obtain a character $\chi$ of $T$, a ``highest weight'' for $U$, which we may  restrict to the center $Z(L_P)$ of the Levi factor. The definition of non-critical is then phrased in terms of the eigenspaces of $J_P(V)$ under the action of the center $Z(L_P)$. The subtlety of which characters to qualify over is already present in the intricacy of so-called {\em generalized} Verma modules \cite[Section 9]{hum}. However, we suggest in Section \ref{subsec:motivation} why one should consider characters of the center, and one possible strategy to checking the non-critical hypothesis.

\subsection{Relationship with locally analytic socles}
Theorem A is meant to capture a strong relationship between representations appearing in an Emerton--Jacquet module $J_P(V)$ and principal series appearing in $V$. However, in contrast with the classical theory of smooth and admissible representations, the principal series $\Ind_{P^-}^G(U\otimes \pi)^{\an}$ may be highly reducible. For example, it is easy to construct examples of $U,\pi$ and $V$ such that every non-zero map $\Ind_{P^-}^G(U\otimes \pi)^{\an} \ra V$ factors through a proper quotient. And so, for Theorem A to be useful, one needs to prove a stronger version. For that, we restrict to the case where $U$ is an irreducible finite-dimensional {\em algebraic} representation of $L_P$, i.e. the induced representation of $L_P$ on an irreducible finite-dimensional algebraic representation of the underlying algebraic group $\Res_{K/\mathbf Q_p}\mbf L_P$.
\begin{theoB}[Theorem \ref{coro:socle}]
Suppose that $U$ is an irreducible finite-dimensional algebraic representation of $L_P$ and $\pi$ is a finite length smooth representation of $L_P$ admitting a central character, such that $\Ind_{P^-}^G(\pi)^{\sm}$ is irreducible. Let $V$ be a very strongly admissible, f-$\mf p$-acyclic representation of $G$ such that $(U,\pi)$ is non-critical with respect to $V$. Then the containment $\soc_G \Ind_{P^-}^G(U\otimes \pi(\delta_P^{-1}))^{\an} \subset \Ind_{P^-}^G(U\otimes \pi(\delta_P^{-1}))^{\an}$ induces a natural isomorphism
\begin{equation*}
\Hom _G(\Ind _{P^-} ^G(U \otimes \pi(\delta_P^{-1})) ^{\an}, V) \simeq \Hom _G (\soc _G \Ind _{P^-} ^G(U \otimes \pi(\delta_P^{-1})) ^{\an}, V)
\end{equation*}
\end{theoB}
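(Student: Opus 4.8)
The plan is to deduce Theorem B from Theorem A by analyzing the structure of the locally analytic principal series $\Ind_{P^-}^G(U\otimes\pi(\delta_P^{-1}))^{\an}$ and showing that, under the non-critical hypothesis, any continuous map to $V$ automatically kills everything outside the socle. Write $W := \Ind_{P^-}^G(U\otimes\pi(\delta_P^{-1}))^{\an}$. Since the natural restriction map $\Hom_G(W,V)\to\Hom_G(\soc_G W,V)$ is always defined, the content is its bijectivity. Injectivity and surjectivity will both come from understanding which irreducible subquotients of $W$ can map nontrivially to $V$, and for that the Orlik--Strauch theory (recalled in Section \ref{section:socle}) is the right tool: because $U$ is irreducible algebraic and $\Ind_{P^-}^G(\pi)^{\sm}$ is irreducible, the Jordan--H\"older factors of $W$ are of Orlik--Strauch type $\mathcal F_{Q}^{G}(M, \sigma)$ for generalized Verma modules $M$ attached to weights strongly linked to the highest weight $\chi$ of $U$, tensored with smooth pieces built from $\pi$. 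The socle consists precisely of the factors attached to $\chi$ itself (the "dominant" end of the linkage poset).

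The key step is the following dichotomy for an irreducible subquotient $S$ of $W$: either $S$ already occurs in $\soc_G W$, or the infinitesimal/central character data forces $\Hom_G(S,V)=(0)$ given that $(U,\pi)$ is non-critical with respect to $V$. Concretely, by the Emerton--Jacquet adjunction in the balanced form \eqref{eqn:new-adjunction} (or directly by Theorem A applied with the highest weight $\chi'$ of the relevant subquotient in place of $U$), a nonzero map out of a non-socle constituent would produce a nonzero vector in $J_P(V)^{Z(L_P)=\chi'}$ for some $\chi'\neq\chi$ strongly linked to $\chi$, contradicting non-criticality. I would make this precise by dévissage: filter $W$ so that the successive quotients are isotypic for the $Z(L_P)$-action on their "highest weight", apply $\Hom_G(-,V)$ to the short exact sequences, and use the non-critical vanishing to collapse the resulting long exact sequence down to the socle term. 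Here one must be careful that $V$ being f-$\mf p$-acyclic is exactly what guarantees the relevant higher $\Ext$-type obstructions (coming from the fact that $W$ is built by a Bruhat-type filtration, not a direct sum) vanish, so that the collapse is clean; this is where the hypotheses of Theorem A are genuinely used rather than just quoted.

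The main obstacle I anticipate is bookkeeping the passage between the "character of $T$" language in which non-criticality is phrased and the "generalized Verma module / central character of $Z(L_P)$" language governing the constituents of $W$ when $\mbf P$ is not a Borel: a single strongly linked character $\chi'$ of $T$ can contribute to $J_P(V)$ only through its restriction to $Z(L_P)$, and several Verma data may share a central character, so one needs the precise compatibility (promised in Section \ref{subsec:motivation} and Definition \ref{defi:badness}) between strong linkage for $G$ and the block decomposition for $\mf l_P$. A secondary subtlety is that $\soc_G W$ may itself be reducible (a direct sum of several copies or several non-isomorphic irreducibles), so "induces a natural isomorphism" must be checked to mean the full restriction map, not merely a map onto one summand; this is handled by noting that Theorem A, applied with $U\otimes\pi$ unchanged, already computes the right-hand side, and the socle is by construction the maximal subrepresentation all of whose constituents carry the central character $\chi|_{Z(L_P)}$ — so the two sides of the claimed isomorphism are literally computed by the same $\Hom_{L_P}(U\otimes\pi, J_P(V))$, and the remaining work is to verify that the restriction map realizes this identification. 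Once the dévissage is set up, the argument is short; essentially all the difficulty has been front-loaded into Theorem A and the Orlik--Strauch description of $W$.
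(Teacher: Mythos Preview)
Your d\'evissage plan correctly identifies the Jordan--H\"older factors of $W=\Ind_{P^-}^G(U\otimes\pi(\delta_P^{-1}))^{\an}$ via Orlik--Strauch, and the vanishing $\Hom_G(S,V)=0$ for non-socle constituents $S$ is indeed obtainable from non-criticality (though not by ``Theorem A applied with $\chi'$ in place of $U$'': the constituents $S$ are not principal series, so one needs Breuil's adjunction formula, Proposition~\ref{br-adj}, to identify $\Hom_G(\mathcal F_{P^-}^G(L,\pi),V)$ with a $(\mathfrak g,P)$-module Hom which was already shown to vanish in Proposition~\ref{prop:iso-and-inj}). This suffices for injectivity of the restriction map.

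The genuine gap is surjectivity. Extending a map $\soc_G W\to V$ to all of $W$ is obstructed by something that behaves like $\Ext^1_G(W/\soc_G W,V)$ in the locally analytic category, and your assertion that ``f-$\mathfrak p$-acyclic is exactly what guarantees the relevant higher Ext-type obstructions vanish'' is not justified: the hypothesis controls $\Ext^1_{\mathfrak p_L}(U,V)$ for finite-dimensional $U$, which is several steps removed from any $G$-equivariant Ext group. The paper does not attempt such an $\Ext^1_G$ computation. Instead it uses Breuil's adjunction formula (Proposition~\ref{br-adj}) to translate \emph{both} $\Hom_G(W,V)$ and $\Hom_G(\soc_G W,V)$ into $(\mathfrak g,P)$-module Homs, namely the source and target of the map labelled (3a) in the big diagram on page~\pageref{eqn:big_diagram}. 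That map was already proven to be an isomorphism in the course of Theorem~A (Proposition~\ref{prop:iso-and-inj} plus Corollary~\ref{coro:final-corollary}); the surjectivity, where f-$\mathfrak p$-acyclicity actually enters, was the delicate splitting argument of Proposition~\ref{prop:be} and Theorem~\ref{theo:sur}, carried out at the level of $P$-representations rather than $G$-representations. So the missing idea in your proposal is precisely this translation via Proposition~\ref{br-adj}: it is what lets one recycle the hard part of Theorem~A rather than redo an analogous Ext computation at the $G$-level.
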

Here, the notation $\soc_G(-)$ refers to the locally analytic {\em socle}, i.e. the sum of the topologically irreducible subrepresentations (under these assumptions, it is actually irreducible). The hypothesis on $\pi$ is sufficient, but not necessary (see the statement of Theorem \ref{coro:socle} and Remark \ref{rema:irreducibility}). The socles of principal series play a central role in recent conjectures of Breuil \cite{br}. Combining Theorems A and B results in an obvious corollary, which we omit.

\subsection{Methods}
The proof of Theorem A comes in two steps. The first step is to understand composition series of certain $(\mf g, P)$-modules within the category $\mc O^P$ recently introduced by Orlik and Strauch \cite{os}. The composition series we consider are clear generalizations of composition series of classical Verma modules. This is carried out in Section \ref{sec:verma-mods-fd} and constitutes a significant portion of our work (see Section \ref{subsec:verma2} for the results). It should also be of independent interest to have information on the composition series in $\mathcal O^P$.

The second step is to use the description of the composition series  to prove Theorem A. This is done in Section \ref{sec:adj-formula}. The crux of the argument follows from the $p$-adic functional analysis studied by Emerton in his two papers on the Emerton--Jacquet functor \cite{em2,em3}.

The proof of Theorem B, which we give in Section \ref{section:socle}, naturally falls out of the techniques we use to prove Theorem A, together with work of Orlik and Strauch \cite{os-old, os}, and Breuil \cite{br1}, on locally analytic principal series. A suitable generalization of some results in \cite{br1} would possibly remove the algebraic hypothesis on $U$. see Remark \ref{rema:irreducible}.

Breuil also proved an adjunction formula \cite[Th\'eor\`eme 4.3]{br} without any non-critical hypotheses, but replacing the locally analytic induction with the Orlik--Strauch representations discussed in Section \ref{section:socle}. A generalization of this formula to $\mathcal O^P$ would be interesting, and any application of such a formula would certainly require the results in Section \ref{sec:verma-mods-fd}.

\subsection{Global motivation}\label{subsec:motivation}
From a purely representation-theoretic point of view, one might wonder why the definition of non-critical qualifies over characters of the center, rather than representations of the Levi subgroup. In order to explain this, let us finish the introduction by placing our theorem and its hypotheses within the still emerging $p$-adic Langlands program. 

A rich source of locally analytic representations of $p$-adic Lie groups comes from the theory of $p$-adically completed cohomology. See \cite{ce} for a summary and further references. In the theory, one constructs $p$-adic Banach spaces, typically denoted by $\widehat H^i$, arising from the $p$-adically completed cohomology of locally symmetric spaces for an adelic group which locally at a place above $p$ is the $p$-adic Lie group $G$. The completion process equips $\widehat{H}^i$ with a natural continuous action of $G$. Passing to locally analytic vectors $\widehat H^i_{\an}$, we get a very strongly admissible locally analytic representation of $G$. The cohomology $\widehat{H}^i$ also has an auxiliary action of a Hecke algebra which we denote by $\mc H$. After localizing at a so-called non-Eisenstein maximal ideal $\mf m \subset \mc H$, we get a representation $V = \widehat H^i_{\an,\mf m}$ which will satisfy the hypotheses of Theorem A (in practice---the localization here is meant to force the cohomology to be acyclic for the derived action of the Lie algebra).

Already in the case of degree zero cohomology, there has been a significant amount of energy spent predicting the principal series representations of $G$ which should appear in spaces $\widehat H^0_{\an,\mf m}$ attached to certain definite unitary groups. See \cite[Section 4]{bh} and \cite[Section 6]{br}. More precisely, \cite[Conjectures 6.1 and 6.2]{br} predicts the principal series whose {\em socle} appears as a subrepresentation of certain cohomology spaces and \cite[Conjecture 4.2.2]{bh} gives a prediction for which continuous principal series (which arise by unitary completion from our setting) should appear. When the non-critical hypothesis is satisfied, Theorems A and B illuminates those conjectures.

The remaining link between Theorem A and $p$-adic Langlands is provided by the theory of {\em eigenvarieties}. Emerton showed in \cite{em1}, see also the work of Loeffler and Hill \cite{hl}, how to use the Emerton--Jacquet module $J_P(\widehat H^i_{\an})$  to construct a $p$-adic rigid analytic space $\mc E$ called an eigenvariety. The eigenvariety $\mc E$ parameterizes pairs $(\chi,\mf m)$, where $\chi$ is a locally analytic character of the center $Z(L_P)$ of the Levi and $\mf m \subset \mc H$ is a maximal ideal, such that the $\chi$-eigenspace for the action of $T$ on $J_P(\widehat H^i_{\an,\mf m})$ is non-zero. The eigenvarieties constructed this way vastly generalize the $p$-adic {\em eigencurve} constructed by Coleman and Mazur \cite{cm}. 

Taking $V = \widehat H^i_{\an,\mf m}$, the obstruction to applying Theorem A to $V$ can be reduced to the existence or non-existence of certain points on the eigenvariety $\mc E$. This final question belongs in the theory of {\em $p$-adic companion forms}, a topic studied previously by the authors in \cite{bcho}, which provides a test case for the motivation laid out here. For more on constructing companion forms, see \cite{bhs,ding}.

\subsection{Notations}\label{subsec:notations}
We summarize our notations for reference. We fix an algebraic closure $\overline{\mbf Q}_p$ and a finite extension $L/\mbf Q_p$ contained in $\overline{\mbf Q}_p$ (our field of coefficients). Let $K$ be another finite extension of $\mbf Q_p$ (our base field). Write $\mc{S}$ for the set of all $\mathbf Q_p$-linear embeddings $\sigma: K \ra \overline{\mbf Q}_p$. We assume throughout that $\sigma(K) \subset L$ for all $\sigma \in \mc S$. We then have a natural equality $K \otimes _{\mbf{Q}_p} L \simeq \prod _{\sigma \in \mc{S}} L_{\sigma}$, via $x \otimes y \mapsto (\sigma(x))$, of $K$-algebras, where $L_{\sigma}$ is the $K$-algebra $\sigma: K \rightarrow L$.  

If $V$ is a $\mbf Q_p$-vector space then we define $V_L := V\otimes_{\mbf Q_p} L$. If $V$ is a $K$-vector space, we can view it as a $\mbf{Q}_p$-vector space and we get $V _L = \prod_{\sigma \in \mc S} V_{\sigma}$ where $V_{\sigma} := V \otimes _{K} L_{\sigma}$. If $V$ is an $L$-vector space we use $V^{\ast}$ to denote the $L$-linear dual. 

If $\mbf H$ is an algebraic group over $K$ then we use a German letter $\mf h$ for its Lie algebra. The notation $\mrm U(\mf h)$ refers to the universal enveloping algebra of $\mf h$. We write $\Res_{K/\mbf Q_p} \mbf H$ for the restriction of scalars of $\mbf H$ down to $\mbf Q_p$. The Lie algebra of $\Res_{K/\mbf Q_p} \mbf H$ is naturally identified with $\mf h$. We also write $\mbf H_{/L}$ for the product $\mbf H_{/L} := (\Res_{K/\mbf Q_p} \mbf H) \times_{\mbf Q_p} L$. This is an algebraic group over $L$ now; its Lie algebra is $\mf h_L = \mf h \otimes_{\mbf Q_p} L$ and $\mbf H_{/L}$ decomposes into a finite product $\mbf H_{/L} \simeq \prod_{\sigma \in \mc S} \mbf H \times_{K} L_{\sigma}$.

We always write $\mbf G$ for a split connected reductive group defined over $K$. We fix a maximal split torus $\mbf T$ and write $\mbf B$ for the choice of a Borel subgroup containing $\mbf T$. We use $\mbf P$ for a standard parabolic subgroup with respect to $\mbf B$. If $\mbf P$ is a standard parabolic then we write $\mbf P = \mbf L_{\mbf P} \mbf N_{\mbf P}$ for its Levi decomposition, $\mbf N_{\mbf P}$ being the maximal unipotent subgroup of $\mbf P$ and $\mbf L_{\mbf P}$ being the correspondng Levi factor. We use the evident notations $\mf g, \mf p$, etc. for the Lie algebras of $\mbf G, \mbf P$, etc. The Lie algebras of $\mbf L_{\mbf P}$, resp. $\mbf N_{\mbf P}$, is written $\mf l_{P}$, resp. $\mf n_P$. Write $\mbf B^{-}$ for the Borel subgroup of $\mbf G$ opposite to $\mbf B$. Then if $\mbf P$ is a standard parabolic we write $\mbf P^{-} \supset \mbf B^-$ for its opposite. The group $\mbf G_{/L}$ is also a split connected reductive group over $L$ with maximal split torus $\mbf T_{/L}$ contained in the Borel subgroup $\mbf B_{/L}$, etc. The Lie algebra of ${\mbf L_{\mbf P}}_{/L}$, resp. ${\mbf N_{\mbf P}}_{/L}$, will be written $\mf l_{P,L}$, resp. $\mf n_{P,L}$.

We use Roman letters $G, P$, etc. for the $K$-points $\mbf G(K)$, $\mbf P(K)$. We view these as locally $\mbf Q_p$-analytic groups. Thus we may study locally $\mbf Q_p$-analytic representations of $G, P$, etc. on $L$-vector spaces. We say plainly locally analytic to mean locally $\mbf Q_p$-analytic; the context should always make this clear.

If $V$ is an $L$-linear locally analytic representation of $G$ then there is a natural induced $\mbf Q_p$-linear action of $\mf g$ on $V$, which extends to the structure of a left module over $\mrm U(\mf g_L)$. An $L$-linear $\mbf Q_p$-analytic representation of $G$ is said to be $\mbf Q_p$-algebraic if it is the induced action of $G \subset \mbf G_{/L}(L)$ on an algebraic representation $W$ of $\mbf G_{/L}$.

Let $X(\mbf T)$ be the abelian group of algebraic characters $\mbf T$ and $Y(\mbf T)$ be the abelian group of co-characters. We let $\Phi \subset X(\mbf T)$ be the set of roots of $\mbf T$ and $\Phi^+$ be the subset of $\Phi$ consisting of roots positive with respect to the choice of Borel $\mbf B$. If $\alpha \in \Phi$ then we write $\alpha^\vee \in Y(\mbf T)$ for the corresponding co-root. Let $\Phi^+ \subset \Phi$ be the set of positive roots corresponding to our choice of Borel subgroup $\mbf B$. The standard parabolics $\mbf P \supset \mbf B$ correspond bijectively with choices of subsets $I \subset \Delta$. The choice $I = \emptyset$ gives $\mbf P = \mbf B$ and the choice $I = \Delta$ gives $\mbf P = \mbf G$. We will also use the notation $\Phi_{/L} \subset X(\mbf T_{/L})$ for the set of roots of $\mbf T_{/L}$, $\Phi_{/L}^+$ for those positive with respect to $\mbf B_{/L}$, etc.

The dual $\mf t_L^\ast = \Hom_L(\mbf t_L, L) = \Hom_{\mbf Q_p}(\mf t, L)$ is the set of weights of $\mf t_L$. The roots $\Phi_{/L}$ of $\mbf T_{/L}$ will be naturally identified with the subset of $\mf t_L^\ast$ consisting of the roots of $\mf t_L$ acting on $\mf g_L$ in the sense that we will use the {\em same} letter (usually $\alpha$) for either.  If $\lambda \in \mf t_L^\ast$ is a weight and $\eta: L \rightarrow \mf t_L$ is a co-weight then we write $\langle \lambda , \eta \rangle$ for the canonical element of $L$ given by $(\lambda \circ \eta)(1)$. If $\chi: T \rightarrow L^\times$ is a locally analytic character we write $d\chi: \mf t_L \rightarrow L$ for its derivative.

\section{Reminders}

\subsection{Verma modules}
\label{subsec:verma-reminder}
Throughout this subsection, we let $\mf g$ be a split reductive Lie algebra over a field $L$ of characteristic zero.\footnote{All the results in this subsection will be applied to $\mf g\otimes_{\mbf Q_p} L$ for $\mf g$ a $\mbf Q_p$-Lie algebra as in Section \ref{sec:verma-mods-fd}.} We will recall the theory of Verma modules for $\mf g$. Write $\mf t$ for a split Cartan subalgebra and $\mf b \supset \mf t$ for a Borel subalgebra containing $\mf t$. A standard parabolic is a parabolic subalgebra $\mf p \subset \mf g$ such that $\mf p \supset \mf b$. We write $\mf t^{\ast}$ for the $L$-linear dual of $\mf t$. We let $\Phi \subset \mf t^{\ast}$ be the set of roots of $\mf t$. The choice of Borel $\mf b$ determines a set of positive roots $\Phi^+$. Fix a set of simple roots $\Delta \subset \Phi^+$. Then each standard parabolic $\mf p$ is determined by a subset $I \subset \Delta$ as in \cite[Section 9.1]{hum}. The choice $I = \emptyset$ gives $\mf p = \mf b$ and the choice $I = \Delta$ gives $\mf p = \mf g$. Write $\rho_0 \in \mf t^{\ast}$ for the half sum of the positive roots.

If $\mf p$ is a standard parabolic then we write $\mf p = \mf n_{\mf p} \oplus \mf l_{\mf p}$ for its decomposition into the maximal nilpotent subalgebra and the Levi factor. Following \cite{os}, we define the category $\mathcal O^{\mathfrak p}$ to be the full subcategory of left $\mrm U(\mf g)$-modules $M$ which satisfy the following properties:
\begin{enumerate}
\item$M$ is finite-type over $\mrm U(\mf g)$.
\item $M$ is the direct sum of weight spaces $M_{\lambda} := \{m \in M \mid X\cdot m = \lambda(X)m \text{ for all $X \in \mf t$}\}$ as $\lambda$ runs over elements of $\mf t^{\ast}$.
\item For each vector $m \in M$, the $L$-vector space $\mrm U(\mf p)\cdot m$ is finite-dimensional.
\end{enumerate}
This an analog of the category $\mc O^{\mf p}$ defined in \cite[Chapter 9]{hum} which is suited to the possibility that $L$ itself may not be algebraically closed. For example, the second and third axioms together imply that $M$ is also a direct sum of finite-dimensional $\mrm U(\mf l_{\mf p})$-modules which are themselves absolutely irreducible. As a consequence, the results in \cite{hum} are valid to cite in our setting. Note that if $\mf p \supset \mf q$ are two choices of parabolic subgroups then $\mc O^{\mf p}$ is a full subcategory of $\mc O^{\mf q}$. In particular, $\mc O^{\mf b} \supset \mc O^{\mf p}$ for all standard parabolics $\mf p$ ($\mc O^{\mf b}$ is usually written plainly as $\mc O$). The elements in $\mc O^{\mf g}$ are the finite-dimensional representations of $\mathfrak g$ on which $\mathfrak t$ acts diagonalizably.

\begin{exam}
Let $W$ be an $L$-linear finite dimensional representation of $\mf l_{\mf p}$ on which $\mf t$ acts diagonalizably. Consider $W$ as a left module over $\mrm U(\mf l_{\mf p})$. By inflation it defines a left $\mathrm U(\frak p)$-module. We define the {\em generalized Verma module}
\begin{equation*}
M_{\mf p}(W) := \mrm U(\mf g) \otimes_{\mrm U(\mf p)} W.
\end{equation*}
The $\mrm U(\mf g)$-module $M_{\mf p}(W)$ lies in the full subcategory $\mc O^{\mf p}$. Note that we could also use the opposite parabolic $\mf p^-$, in which case we get $M_{\mf p^{-}}(W)$. 
\end{exam}

If $\alpha \in \Phi$ is a root of $\mf t$ then write $\alpha^\vee$ for its co-root. If $\mu \in \mf t^{\ast}$ is a weight then write $\langle \mu, \alpha^\vee \rangle := (\mu\circ\alpha^\vee)(1) \in L$. If $\mf p$ corresponds to the choice of simple roots $I \subset \Delta$ then write
\begin{equation*}
\Lambda_{\mf p}^+ = \{ \lambda \in \mf t^{\ast} \mid \langle \lambda + \rho_{0} , \alpha^\vee \rangle \in \mbf Z_{> 0} \text{ for all $\alpha \in I$} \}.
\end{equation*}
The elements $\lambda \in \Lambda_{\mf p}^+$ parameterize finite-dimensional absolutely irreducible $\mrm U(\mf l_{\mf p})$-modules on which $\mathfrak t$ acts diagonalizably \cite[Section 9.2]{hum}. We denote this correspondence by $\lambda \leftrightarrow W_\lambda$. In the case where $\mf p = \mf b$, every $\lambda$ is in $\Lambda_{\mf b}^+$ and  $W_\lambda$ is just the weight $\lambda$ itself. The salient facts are given by the following.
\begin{prop}\label{prop:verma-facts}
Suppose that $\lambda \in \mf t^{\ast}$.
\begin{enumerate}
\item The Verma module $M_{\mf b}(\lambda)$ has finite length in $\mathcal O^{\mathfrak b}$ and a unique irreducible quotient $L(\lambda)$.
\item Every irreducible object in $\mc O^{\mf p}$ is of the form $L(\lambda)$ for some $\lambda \in \Lambda_{\mf p}^+$.
\item If $\lambda \in \Lambda_{\mf p}^+$ then there is a unique $\mrm U(\mf g)$-module quotient $M_{\mf b}(\lambda) \twoheadrightarrow M_{\mf p}(W_\lambda)$ and $L(\lambda)$ is the unique irreducible quotient of $M_{\mf p}(W_\lambda)$.
\item If $\lambda \in \Lambda_{\mf p}^+$ then the irreducible constituent $L(\lambda)$ of $M_{\mf p}(W_\lambda)$ appears exactly once as a subquotient.
\item If $\lambda \in \Lambda_{\mf p}^+$ and $e_\lambda^+ \in M_{\mf b}(\lambda)$ is the highest weight vector then the $\mrm U(\mf p)$-submodule generated by the image of $e_{\lambda}^+$ in the quotient map $M_{\mf b}(\lambda) \twoheadrightarrow L(\lambda)$ is canonically isomorphic to $W_{\lambda}$.
\end{enumerate}
\end{prop}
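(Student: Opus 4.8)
The plan is to reduce every assertion to the classical structure theory of the BGG category $\mc O$ and its parabolic variant, as developed in \cite[Chapters 1 and 9]{hum}; the only genuine issue is that $L$ is not assumed algebraically closed. The first step is therefore to record a base-change compatibility: for $M \in \mc O^{\mf p}$, the module $M \otimes_L \overline L$ lies in the corresponding category over $\overline L$ with the same $\mf t$-weights and weight-space dimensions, and the formation of the maximal submodule, of the simple objects $L(\lambda)$, of the generalized Verma modules $M_{\mf p}(W_\lambda)$, and of composition multiplicities all commute with $-\otimes_L\overline L$. This is legitimate because each of these is controlled by the weight-space decomposition, which is already defined over $L$ by axiom (2), and because the Levi-constituents of objects of $\mc O^{\mf p}$ are absolutely irreducible (as noted after the definition of $\mc O^{\mf p}$), so that the parametrisation $\lambda \leftrightarrow W_\lambda$ of \cite[Section 9.2]{hum} is itself stable under base change. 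Granting this, each part follows from its $\overline L$-counterpart in \cite{hum}, and I will indicate those counterparts.

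For (1): $M_{\mf b}(\lambda)$ has a unique maximal submodule because any proper submodule, being a sum of weight spaces, lies in $\bigoplus_{\mu < \lambda} M_{\mf b}(\lambda)_\mu$ and so misses the line $M_{\mf b}(\lambda)_\lambda$; the quotient by it is $L(\lambda)$. Finite length is the Harish-Chandra argument: the composition factors are $L(\mu)$ with $\mu \le \lambda$ linked to $\lambda$, of which there are finitely many, and each weight space is finite-dimensional, so $M_{\mf b}(\lambda)$ is Artinian; it is Noetherian since $\mrm U(\mf g)$ is. For (2): an irreducible $M \in \mc O^{\mf p} \subset \mc O^{\mf b}$ has weights bounded above, hence is generated by a highest weight vector and is a quotient of some $M_{\mf b}(\lambda)$, so $M \simeq L(\lambda)$; axiom (3) forces the image of the highest weight vector to generate a finite-dimensional $\mrm U(\mf l_{\mf p})$-module, which is exactly the condition $\lambda \in \Lambda_{\mf p}^+$. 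For (3): $W_\lambda$ is the unique irreducible quotient of the Levi Verma module $M_{\mf b}^{\mf l_{\mf p}}(\lambda)$, so applying the exact functor $\mrm U(\mf g)\otimes_{\mrm U(\mf p)}(-)$ together with transitivity of induction yields a surjection $M_{\mf b}(\lambda)\twoheadrightarrow M_{\mf p}(W_\lambda)$, unique up to scalar since $\dim_L M_{\mf p}(W_\lambda)_\lambda = 1$; its kernel is a proper submodule, hence lies in the maximal one, so $M_{\mf b}(\lambda)\twoheadrightarrow L(\lambda)$ factors through $M_{\mf p}(W_\lambda)$, and any irreducible quotient of $M_{\mf p}(W_\lambda)$ is a fortiori one of $M_{\mf b}(\lambda)$, hence equals $L(\lambda)$. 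Part (4) follows since $M_{\mf p}(W_\lambda)$ is a quotient of $M_{\mf b}(\lambda)$ and $[M_{\mf b}(\lambda):L(\lambda)]=1$ --- only the one-dimensional $\lambda$-weight space can detect a copy of $L(\lambda)$, while $L(\mu)$ for $\mu<\lambda$ has zero $\lambda$-weight space --- with the lower bound given by (3). For (5): the image of $e_\lambda^+$ in $L(\lambda)$ is annihilated by $\mf n_{\mf p}\subset\mf n_{\mf b}$ and generates a finite-dimensional $\mrm U(\mf l_{\mf p})$-module by axiom (3), hence a nonzero quotient of $W_\lambda$; since $W_\lambda$ is irreducible over $\mrm U(\mf l_{\mf p})$, this quotient is an isomorphism, canonically normalised by sending the standard generator of $W_\lambda$ to the image of $e_\lambda^+$.

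The main obstacle, such as it is, lies in the first paragraph: being scrupulous that the Orlik--Strauch category $\mc O^{\mf p}$ of \cite{os} used here really does become Humphreys' category after $-\otimes_L\overline L$, and in particular that $L(\lambda)$ remains irreducible over $\overline L$, so that composition lengths are unchanged. Everything downstream is then a routine transcription of \cite[Chapter 1 and Section 9]{hum}; the only statement not found there essentially verbatim is (5), which is an immediate consequence of the realisation of $L(\lambda)$ as a quotient of $M_{\mf p}(W_\lambda)$.
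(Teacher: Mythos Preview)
Your proposal is correct and follows essentially the same strategy as the paper: both reduce everything to the standard results in \cite{hum}, with the non-algebraically-closed field handled up front (you via an explicit base-change argument, the paper by having already observed that the $\mrm U(\mf l_{\mf p})$-constituents are absolutely irreducible so that \cite{hum} applies verbatim). The one place your argument genuinely diverges is part (5): the paper uses the factorisation $M_{\mf b}(\lambda)\twoheadrightarrow M_{\mf p}(W_\lambda)\twoheadrightarrow L(\lambda)$ to embed $W_\lambda$ directly into $L(\lambda)$ and then observes that this image contains (hence equals) the $\mrm U(\mf p)$-submodule generated by $\bar e_\lambda^+$, whereas you argue that the latter submodule is a finite-dimensional highest weight $\mrm U(\mf l_{\mf p})$-module and hence must be $W_\lambda$. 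Your route implicitly uses that a finite-dimensional cyclic highest weight module for a reductive Lie algebra is simple (Weyl's complete reducibility on the derived part), while the paper's route avoids this by working inside $M_{\mf p}(W_\lambda)$; both are fine, but the paper's is slightly more self-contained.
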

\begin{proof}
This is all well-known so we mostly provide references. Point (1) follows from $M_{\mf b}(\lambda)$ being a highest weight module (see \cite[Corollary 1.2]{hum} and \cite[Theorem 1.2(f)]{hum}). The point (2) follows from \cite[Theorem 1.3]{hum} and \cite[Proposition 9.3]{hum}. We pause on (3) for the moment. The point (4), reduces to the case $\mf b = \mf p$ by (3). In that case, it is a consequence of highest weight theory.

Let us prove (3) and (5) now. The representation $W_\lambda$ has a highest weight vector $v_\lambda^+$. The map $e_{\lambda}^+ \mapsto v_\lambda^+$ extends to a $\mrm U(\mf g)$-equivariant map
\begin{equation*}
M_{\mf b}(\lambda) = \mrm U(\mf g) \otimes_{\mrm U(\mf b)} e_\lambda^+  \twoheadrightarrow \mrm U(\mf g) \otimes_{\mrm U(\mf p)} W_\lambda =M_{\mf p}(W_\lambda)
\end{equation*}
(see \cite[Section 9.4]{hum}) and we naturally have a commuting diagram
\begin{equation*}
\xymatrix{
M_{\mf b}(\lambda) \ar[r] \ar[d] & L(\lambda)\\
M_{\mf p}(W_{\lambda}) \ar[ur]
}
\end{equation*}
whose arrows are all surjective. This proves (3). To prove (5), let $W$ be the $\mrm U(\mf p)$-submodule generated by the image of $e_{\lambda}^+$ inside $L(\lambda)$. Then, consider $W_\lambda \subset M_{\mf p}(W_\lambda)$. This is a $\mrm U(\mf p)$-stable subspace. Since $v_\lambda^+ \in W_{\lambda}$, the image of $W_\lambda$ in $L(\lambda)$ is non-zero and contains the image of $e_\lambda^+$. Since $W_{\lambda}$ is an irreducible $\mrm U(\mf p)$-module we deduce that $W_{\lambda} \hookrightarrow L(\lambda)$ is a $\mrm U(\mf p)$-submodule that contains $W$. Again, since $W_{\lambda}$ is irreducible we deduce that $W = W_{\lambda}$.
\end{proof}

If $M$ is an $L$-vector space then $M^{\ast}$ denotes its $L$-linear dual. If $M$ has an action of $\mrm U(\mf g)$ then there are two separate actions we may endow $M^{\ast}$ with. To uniformly explain this, suppose that $s: \mrm U(\mf g) \rightarrow \mrm U(\mf g)$ is an $L$-linear anti-automorphism. In that case we can define a {\em left} action of $\mrm U(\mf g)$ on $M^{\ast}$ by
\begin{equation*}
(X\cdot_s f)(m) := f(s(X) m)
\end{equation*}
for $X \in \mrm U(\mf g)$, $f \in M^{\ast}$ and $m \in M$. The two choices for $s$ we will use are:\footnote{We use the $d$-notation to indicate our intention in Section \ref{sec:verma-mods-fd} to apply this to the derivatives of inversion and transposition on a split connected reductive algebraic group.}
\begin{itemize}
\item The anti-automorphism $d\iota$ on $\mrm U(\mf g)$ which acts by $-1$ on $\mf g$ (see \cite[\S 3.1]{br} for example), or
\item the anti-automorphism $d\tau$ on $\mrm U(\mf g)$ which fixes $\mf t$ point-by-point and defines an isomorphism $d\tau: \mf g_\alpha \simeq \mf g_{-\alpha}$ for every non-zero root $\alpha$ (see \cite[\S 0.5]{hum}).
\end{itemize}

We write $M^{\ast,d\iota}$ for the $\mrm U(\mf g)$-module $M^{\ast}$ equipped with the $d\iota$-action described above, and we write $M^{\ast,d\tau}$ when we equip $M^{\ast}$ with the action of $\mrm U(\mf g)$ by $d\tau$.

Now suppose that in addition to a left $\mrm U(\mf g)$-module structure, $M$ is also a weight module for the action of $\frak t$. For each weight $\lambda \in \mf t^{\ast}$, $M_\lambda$ is a direct summand of $M$. So, we consider the surjection $M \twoheadrightarrow M_\lambda$ and the corresponding $L$-linear inclusion $j_\lambda: (M_\lambda)^{\ast} \hookrightarrow M^{\ast}$. It is elementary to check that for each $\lambda \in \mf t^{\ast}$,
\begin{equation*}
(M^{\ast,\tau})_\lambda = j_{\lambda}((M_\lambda)^{\ast}) = (M^{\ast,\iota})_{-\lambda}.
\end{equation*}
If we define, just as an $L$-vector space, $M^{\ast,\infty} \subset M^{\ast}$ by
\begin{equation*}
M^{\ast,\infty} = \bigoplus_{\lambda \in \mf t^{\ast}_L} j_\lambda((M_\lambda)^{\ast})
\end{equation*}
then we have natural identifications of vector spaces (\textit{not} $\mrm U(\mf g)$-modules)
\begin{equation}\label{eqn:n-power-kills}
\left(M^{\ast,\iota}\right)[{\mf n_{\mf b^-}^\infty}]  = M^{\ast,\infty} = \left(M^{\ast,\tau}\right)[{\mf n_\mf b^{\infty}}].
\end{equation}
Here, if $V$ is an $L$-vector space with an $L$-linear action of a Lie algebra $\mf h$ and $k\geq 0$ is an integer then write $V[{\mf h^k}]$ for the subspace of $v \in V$ annihilated by every monomial $X_1\dotsb X_k \in \mrm U(\mf h)$ with $X_i \in \mf h$ (not necessarily distinct). We write $V[{\mf h^{\infty}}]$ for the union of the $V[{\mf h^k}]$ as $k\geq 0$.  If $M$ is a weight module then $(M^{\ast,\tau})[{\mf n_{\mf b}^k}]$ is a $\mrm U(\mf b)$-stable subspace and their union $(M^{\ast,\tau})[{\mf n_{\mf b}^\infty}]$ is $\mrm U(\mf g)$-stable. Similarly for the first space in \eqref{eqn:n-power-kills}.
\begin{defi}\label{defi:duality}
Suppose that $M \in \mc O^{\mf b}$. 
\begin{enumerate}
\item The internal dual, denoted $M^\vee$, is the left $\mrm U(\mf g)$-module $M^\vee := (M^{\ast,\tau})[{\mf n_{\mf b}^\infty}]$. 
\item The opposite dual, denoted $M^-$, is the left $\mrm U(\mf g)$-module $M^- := \left(M^{\ast,\iota}\right)[{\mf n_{\mf b^-}^\infty}]$.
\end{enumerate}
\end{defi}

It must be checked what category the separate duals lie in.  Note that since $\mc O^{\mf b} \supset \mc O^{\mf p}$, the dualities are defined on, but {\em a priori}  may not preserve, the category $\mc O^{\mf p}$.

\begin{lemm}\label{lemm:dualities}
Suppose that $M \in \mc O^{\mf p}$.
\begin{enumerate}
\item The internal dual $M^\vee$ lies in $\mc O^{\mf p}$.
\item The opposite dual $M^-$ lies in $\mc O^{\mf p^-}$.
\item Both dualities are exact, contravariant, equivalences of categories.
\item We have a commutation relation $(M^\vee)^- \simeq (M^-)^\vee$ in $\mc O^{\mf p^-}$.
\end{enumerate}
\end{lemm}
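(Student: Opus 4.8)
The idea is to deduce everything from the classical duality on $\mc O$ by twisting with one auxiliary automorphism. Recall (\cite[\S 0.5]{hum}) that $d\tau$ is an involution; hence the composite $\theta := d\tau\circ d\iota$ is an \emph{automorphism} of $\mrm U(\mf g)$ whose restriction to $\mf g$ is a Chevalley involution --- it acts by $-1$ on $\mf t$ and carries $\mf g_\alpha$ isomorphically onto $\mf g_{-\alpha}$ for every root $\alpha$ --- and it satisfies $\theta\circ d\tau = d\tau\circ\theta\;(=d\iota)$. In particular $\theta(\mf b)=\mf b^-$, $\theta(\mf p)=\mf p^-$, and $\theta(\mf l_{\mf p})=\mf l_{\mf p}$ (the last because $\mf l_{\mf p}$ is the sum of $\mf t$ and the root spaces for a set of roots stable under $\alpha\mapsto -\alpha$). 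Twisting the $\mrm U(\mf g)$-action by $\theta$ therefore defines an exact, covariant isomorphism of categories $(-)^\theta\colon \mc O^{\mf q}\simeq\mc O^{\theta(\mf q)}$ for every parabolic $\mf q$ containing $\mf b$ or $\mf b^-$; its inverse is $(-)^\theta$ again.

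First I would dispose of (1) and the $(-)^\vee$-part of (3) by citation: $(-)^\vee$ is the usual BGG duality, so for $M\in\mc O$ one has $M^\vee\in\mc O$ with $\dim M^\vee_\lambda=\dim M_\lambda$, the functor is exact and contravariant, $M\mapsto (M^\vee)^\vee$ is a natural isomorphism, and $\mc O^{\mf p}$ is stable under $(-)^\vee$ (see \cite{hum}). For (2) the key point is that $M^-$ and $M^\vee$ share the \emph{same} underlying vector space: by \eqref{eqn:n-power-kills} both coincide with $M^{\ast,\infty}$. Comparing the two actions, $X\in\mf g$ acts on $M^-$ by $f\mapsto f\circ d\iota(X)$ and on $(M^\vee)^\theta$ by $f\mapsto f\circ d\tau(\theta(X))=f\circ d\iota(X)$; thus $M^-=(M^\vee)^\theta$ as $\mrm U(\mf g)$-modules, naturally in $M$. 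Since $M^\vee\in\mc O^{\mf p}$ and $(-)^\theta$ carries $\mc O^{\mf p}$ into $\mc O^{\mf p^-}$, this yields $M^-\in\mc O^{\mf p^-}$, which is (2), and it exhibits $(-)^- = (-)^\theta\circ (-)^\vee$ as the composite of an exact contravariant equivalence with an isomorphism of categories, hence an exact contravariant equivalence $\mc O^{\mf p}\to\mc O^{\mf p^-}$ whose quasi-inverse is the opposite dual defined symmetrically (exchanging the roles of $\mf b$ and $\mf b^-$) on $\mc O^{\mf p^-}$.

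It remains to verify (4) and the double-duality isomorphism $(M^-)^-\simeq M$ in (3). Here $(-)^\vee$ on $\mc O^{\mf p^-}$ means the BGG duality defined verbatim from $\mf b^-$, and I would use the identity $(N^\theta)^\vee\simeq (N^\vee)^\theta$ for $N\in\mc O^{\mf p}$ --- proved by the same underlying-space comparison together with $\theta\circ d\tau=d\tau\circ\theta$, which says precisely that $(-)^\theta$ intertwines the BGG duality of $\mc O^{\mf p}$ with that of $\mc O^{\mf p^-}$. Then $(M^\vee)^- = ((M^\vee)^\vee)^\theta\simeq M^\theta$, while $(M^-)^\vee = ((M^\vee)^\theta)^\vee\simeq ((M^\vee)^\vee)^\theta\simeq M^\theta$, which gives (4); and $(M^-)^- = ((M^\vee)^\theta)^-$, unwound in the same way, is $\simeq (M^\vee)^\vee\simeq M$. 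There is no deep obstacle here: the internal dual preserving $\mc O^{\mf p}$ is classical and the opposite dual is just the internal dual conjugated by a Chevalley involution. The only genuinely delicate part is the bookkeeping --- keeping straight which Borel, which of $d\iota$ and $d\tau$, and which truncation $[\mf n^{\infty}]$ enters each construction, and checking that the vector-space identifications $M^\vee = M^- = M^{\ast,\infty}$ and $(M^\vee)^\vee\simeq M$ stay compatible with the module structures after twisting by $\theta$ (in particular that the outputs land in $\mc O^{\mf p}$ versus $\mc O^{\mf p^-}$ as claimed).
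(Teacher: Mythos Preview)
Your argument is correct, but it proceeds along a genuinely different route from the paper. The paper's proof is essentially a two-line observation: the underlying vector space of both $M^\vee$ and $M^-$ is the common subspace $M^{\ast,\infty}\subset M^\ast$, and from this together with the containments $\mf n_{\mf p}\subset \mf n_{\mf b}$ and $\mf n_{\mf p^-}\subset \mf n_{\mf b^-}$ one reads off (1)--(3) directly; for (4) the paper identifies both $(M^\vee)^-$ and $(M^-)^\vee$ as the same subspace of the double dual $M^{\ast\ast}$ and checks that the $d\iota$- and $d\tau$-actions match. Your approach instead introduces the Chevalley involution $\theta=d\tau\circ d\iota$, proves the factorisation $(-)^- \simeq (-)^\theta\circ(-)^\vee$, and then deduces everything by transport of structure from the classical BGG duality together with the categorical isomorphism $(-)^\theta\colon\mc O^{\mf p}\simeq\mc O^{\mf p^-}$. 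This is more conceptual --- it explains \emph{why} the opposite dual lands in $\mc O^{\mf p^-}$ rather than merely verifying it, and the commutation relation (4) becomes a formal consequence of $\theta\circ d\tau=d\tau\circ\theta$ --- at the cost of some additional bookkeeping (keeping track of which BGG duality is meant on $\mc O^{\mf p^-}$, and checking that $\theta$ intertwines the two). Either approach is fine here; the paper's is shorter, yours is more transparent about the symmetry between the two duals.
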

\begin{proof}
The underlying vector space of either duality is given by $M^{\ast,\infty} \subset M^{\ast}$. Since $\mf n_{\mf p} \subset \mf n_{\mf b}$ (and the same with the opposites substituted), the first two points are clear by definition. Furthermore, it also shows that $(M^\vee)^\vee \simeq M$ and $(M^-)^- \simeq M$, making (3) clear. To prove (4), the underlying vector space of $(M^{\vee})^-$ is $\bigoplus_{\lambda \in \mathfrak t_{L}^\ast} ((M^{\ast,\tau})^{\ast,\iota})_{-\lambda}$ and for $(M^-)^{\vee}$ the role of $\iota$ and $\tau$ is switched. Thus there is an obvious identification of either as linear subspaces of the double dual $M^{\ast\ast}$, and it is also clear that the actions of $d\tau$ and $d\iota$ are compatible.
\end{proof}

\subsection{Locally analytic representations}
\label{subsec:locally-anal-reminder}

We always equip a finite-dimensional $\mbf Q_p$-vector space with its natural topology, the unique finest convex one. Thus linear maps between such spaces are automatically continuous and any linear subspace is closed.

We refer to \cite[Section 3]{st-locdist} as a reference in what follows. Let $H$ be $p$-adic Lie group, i.e.\ a locally $\mbf Q_p$-analytic group, with Lie algebra $\mf h$. We shorten the term locally $\mbf Q_p$-analytic representation of $H$ to just locally analytic representation of $H$.  

There is a partially defined exponential map $\exp: \mf h \dashrightarrow H$ converging on an open neighborhood of $0$ in $\mf h$.  If $V$ is a locally analytic representation of $H$ then there is an induced continuous and $\mbf Q_p$-linear action of $\mf h$ on $V$ given by
\begin{equation*}
X(v) := {d \over dt}\bigg|_{t=0} \exp(tX)(v) \;\;\;\;\; (X \in \mf h, v \in V).
\end{equation*}
This makes $V$ a left module over $\mrm U(\mf h)$. If $V$ has the extra structure of a vector space over $L$ where $L/\mbf Q_p$ is a finite extension then clearly this gives $V$ the structure of a left module over $\mrm U(\mf h\otimes_{\mbf Q_p} L)$.

For each $v \in V$ there exists an open neighborhood $\Omega_v \subset \mf h$ such that the following hold:
\begin{enumerate}
\item $\exp$ is defined on $\Omega_v$,
\item if $X \in \Omega_v$ then $\sum {1\over n!} X^n(v)$ converges, and
\item if $X \in \Omega_v$ then $\exp(X)(v) = \sum {1\over n!} X^n(v)$.
\end{enumerate}
Write $\Ad: H \ra \GL(\mf h)$ for the adjoint representation. If $V$ is a locally analytic representation of $H$ then $h(X(v)) = (\Ad(h) X)(h(v))$ for all $h \in H$, $X \in \mf h$ and $v \in V$.
\begin{lemm}\label{lemm:convergence}
Let $V$ be a locally analytic representation of $H$. Suppose that $X \in \mf h$, $v \in V$ and $\sum {1\over n!}X^n(v)$ converges. Then, for every $h \in H$ the sum $\sum {1\over n!}(\Ad(h)X)^n(hv)$ converges and
\begin{equation*}
h\cdot\left(\sum {1\over n!} X^n(v)\right) = \sum {1\over n!}(\Ad(h)X)^n(hv).
\end{equation*}
\end{lemm}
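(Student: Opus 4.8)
The plan is to reduce everything to two facts: that $h$ acts on $V$ through a continuous $L$-linear automorphism, and that the displayed relation $h(X(v)) = (\Ad(h)X)(h(v))$ iterates cleanly.

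First I would record the iterated relation: by induction on $n \geq 0$, for all $w \in V$ one has $h(X^n(w)) = (\Ad(h)X)^n(h(w))$. The case $n = 0$ is trivial and $n = 1$ is exactly the identity quoted just before the lemma. For the inductive step, apply the $n = 1$ case with $w$ replaced by $X^{n-1}(w)$ and then invoke the inductive hypothesis, giving $h(X^n(w)) = h\big(X(X^{n-1}(w))\big) = (\Ad(h)X)\big(h(X^{n-1}(w))\big) = (\Ad(h)X)^n(h(w))$.

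Next I would use continuity. Write $s_N := \sum_{n=0}^N \tfrac{1}{n!} X^n(v)$ for the partial sums, which by hypothesis converge to $s := \sum_{n \geq 0} \tfrac{1}{n!} X^n(v)$. Since $h$ acts $L$-linearly on $V$, the previous paragraph gives $h(s_N) = \sum_{n=0}^N \tfrac{1}{n!}\, h(X^n(v)) = \sum_{n=0}^N \tfrac{1}{n!}\, (\Ad(h)X)^n(hv)$. Because the action of $h$ on $V$ is continuous — part of the definition of a locally analytic representation — we have $h(s_N) \to h(s)$ as $N \to \infty$. Therefore the partial sums of $\sum_{n \geq 0} \tfrac{1}{n!} (\Ad(h)X)^n(hv)$ converge, and their limit is $h(s)$, which is precisely the asserted identity.

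The only point requiring any care — and it is the \emph{expected} (mild) obstacle — is the passage to the limit: in the locally convex, generally non-normable space $V$ one cannot argue with norms, but a continuous linear map between topological vector spaces carries a convergent sequence to a convergent sequence and intertwines the limits, and this is applied here to the sequence of partial sums $(s_N)$. Everything else is formal bookkeeping with the adjoint identity.
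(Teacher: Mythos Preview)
Your proof is correct and is precisely the argument the paper has in mind: the paper's one-line proof (``clear by approximating by finite sums, and the fact mentioned above that $h(X(v)) = (\Ad(h)X)(hv)$'') is exactly what you have spelled out with the inductive iteration and the continuity of the $h$-action.
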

\begin{proof}
This is clear by approximating by finite sums, and the fact mentioned above that $h(X(v)) = (\Ad(h)X)(hv)$ for all $h \in H$, $X \in \mf h$ and $v \in V$.
\end{proof}
We now turn to the larger context of this paper. So, let $\mbf G$ be a split connected reductive group over $K$ with maximal torus $\mbf T$ and Borel subgroup $\mbf B$. Recall that $\mbf N$ is the unipotent radical of the Borel subgroup $\mbf B$. Since $\mf n$ is nilpotent, the exponential map actually defines a bijection of algebraic varieties $\exp:\underline{\mf n} \simeq \mbf N$ (here $\underline{\mf n}$ means the underlying affine variety of the vector space $\mf n$) and the group law on $\mbf N$ is induced from the formula of Baker--Campbell--Hausdorff (compare with \cite[Section 2.5]{em3}). 

We use Roman letters $G$, $T$, $B$, etc. for the $K$-points of algebraic groups $\mbf G$, $\mbf T$, $\mbf B$, etc. over $K$, and we view them as locally $\mbf Q_p$-analytic groups by restriction of scalars on the underlying algebraic groups. The respective Lie algebras are insensitive to the restriction of scalars. In particular, the $\mbf Q_p$-analytic exponential map $\exp:\mf n \simeq N$ converges everywhere by the previous paragraph.

\begin{lemm}\label{lemm:open-nbhd}
If $\Omega \subset \mf n$ is a non-empty open neighborhood of $0$ then the orbit of $\Omega$ under the adjoint action of $T$ is all of $\mf n$.
\end{lemm}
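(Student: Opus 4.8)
The plan is to exploit the weight-space decomposition of $\mf n$ under the adjoint action of the torus $T$. Recall that $\mf n = \bigoplus_{\alpha \in \Phi^+} \mf g_\alpha$, where the root space $\mf g_\alpha$ is the $\alpha$-eigenspace for the adjoint action of $\mbf T$. Since we are treating $\mbf T$ as a $\mathbf Q_p$-analytic group via restriction of scalars, the relevant action on $\mf g_{\alpha} \otimes_{K} L_\sigma$ is through the composite $\sigma \circ \alpha: T \to L_\sigma^\times$; in any case, the key point is that the subgroup $\alpha^\vee(K^\times) \subset T$ acts on $\mf g_\alpha$ by scaling through the character $t \mapsto t^2$ (via $\alpha \circ \alpha^\vee$), and $K^\times$ contains elements of arbitrarily large $p$-adic absolute value. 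I would first reduce to the case of a single root space: any non-empty open neighborhood $\Omega \ni 0$ contains a basic open of the form $\prod_{\alpha} \Omega_\alpha$ with $\Omega_\alpha \subset \mf g_\alpha$ open around $0$, so it suffices to show $T\cdot \Omega_\alpha = \mf g_\alpha$ for each $\alpha$ and then combine.

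For a single root space, the argument is: given $0 \neq Y \in \mf g_\alpha$, pick any non-zero $Y_0 \in \Omega_\alpha$; since $\Omega_\alpha$ is open and $\mf g_\alpha$ is a finite-dimensional $K$-vector space (indeed one-dimensional as a $K$-space when $\mbf G$ is split, which is the case here), after scaling by an element of $K$ of sufficiently small absolute value we may assume $Y_0$ and $Y$ differ by a scalar $c \in K$ with $Y = c Y_0$. Now choose $s \in K^\times$ with $s^2 = c$ if possible; in general one cannot extract square roots in $K$, so instead one uses that $\Ad(\alpha^\vee(s))$ acts on $\mf g_\alpha$ by $s^{\langle \alpha, \alpha^\vee\rangle} = s^2$, together with the scaling action available from $\Omega_\alpha$ being an open neighborhood. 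Concretely: $\{ s^2 : s \in K^\times\}$ together with the dilation $Y_0 \mapsto \lambda Y_0$ for $\lambda \in \mathcal O_K$ near $0$ (which keeps us inside $\Omega_\alpha$, then we can further dilate by the torus) generate all of $K^\times$ acting on $Y_0$. More cleanly, $\Ad(T) \cdot Y_0 \supset \{ s^2 Y_0 : s \in K^\times \}$ and the set $\{ s^2 : s \in K^\times\}$ is an open subgroup of $K^\times$ (it has finite index), and any coset representative times $Y_0$ already lies in the open set $\Omega_\alpha$ after scaling, so translating by the squares sweeps out all of $\mf g_\alpha \setminus \{0\}$, and $0$ is in $\Omega$ by hypothesis.

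The main obstacle is bookkeeping the interaction between the several root spaces simultaneously: a single element $t \in T$ acts on all $\mf g_\alpha$ at once, so one cannot independently rescale each coordinate with one torus element. The fix is the reduction in the first paragraph — one does \emph{not} need a single $t$; the orbit $T\cdot\Omega$ is a union over all $t \in T$, and one builds up a general element $Y = \sum_\alpha Y_\alpha$ of $\mf n$ by a limiting/successive-approximation argument, or more simply by noting that for each fixed target $Y$ there is a cofinal family of torus elements $t_n = \prod_\alpha \eta_\alpha(p^{-n})$ (products of suitable cocharacters) scaling every coordinate out to large size, hence $\Ad(t_n)^{-1} Y \to 0$ lies in $\Omega$ for $n \gg 0$, giving $Y \in \Ad(t_n)\Omega \subset T\cdot\Omega$. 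This last formulation is probably the cleanest: choose a cocharacter $\eta \in Y(\mbf T)$ with $\langle \alpha, \eta \rangle > 0$ for all $\alpha \in \Phi^+$ (for instance $\eta = \alpha_1^\vee + \dots$ adjusted, or $2\rho^\vee$), so that $\Ad(\eta(p^n))$ contracts $\mf n$ towards $0$ as $n \to \infty$; then for any $Y \in \mf n$ and any open $\Omega \ni 0$ we have $\Ad(\eta(p^n))Y \in \Omega$ for $n$ large, whence $Y = \Ad(\eta(p^{-n}))\bigl(\Ad(\eta(p^n))Y\bigr) \in \Ad(T)\Omega$. I would check that such a contracting cocharacter exists (it does: take $\eta$ dominant regular, or just sum the fundamental coweights), and that $p^n \in K^\times$ has the required valuation growth, which is immediate.
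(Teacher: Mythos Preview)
Your final paragraph is correct and is essentially the paper's own proof: choose a strictly dominant cocharacter $\eta$ (the paper builds one explicitly as a positive integer multiple of the sum of vectors dual to the simple roots, evaluated at a uniformizer $\varpi_K$) so that $\Ad(\eta(\varpi^n))$ contracts every root space of $\mf n$ toward $0$, whence any $Y\in\mf n$ lies in $\Ad(T)\Omega$. The first two paragraphs are unnecessary detours---the single-root-space reduction fails for exactly the reason you identify in the third paragraph, and the square-class discussion is beside the point---so you should simply lead with the contracting-cocharacter argument.
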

\begin{proof}
We must show that if $Y \in \mf n$ then there exists a $t \in T$ such that $\Ad(t)(Y) \in \Omega$. For each root $\alpha$ of $\mbf T$ which is positive with respect to $\mbf B$, write $X_{\alpha} \in \mf n$ for the corresponding root vector. Then we can write $Y = \sum c_{\alpha} X_{\alpha}$, with $c_{\alpha} \in K$ and the sum running over positive roots. Thus, since $\Ad(t)X_{\alpha} = \alpha(t)X_{\alpha}$ for each $\alpha$, it suffices to show that for each $\varepsilon > 0$ there exists a $t \in T$ such that $|\alpha(t)| < \varepsilon$ for all positive simple roots $\alpha$. Since $\alpha$ is multiplicative, it suffices to check $\varepsilon = 1$ and the existence of such a $t$ is well-known.\footnote{For example, if $\Delta$ is the set of simple roots of $\mbf T$ corresponding to $\mbf B$ then extend $\Delta$ to a basis $\Delta'$ of $X(\mbf T)\otimes_{\mbf Z} \mbf Q$. If $\alpha \in \Delta$, let $\omega_{\alpha} \in Y(\mathbf T)\otimes_{\mbf Z} \mbf Q$ be the dual vector (with respect to $\Delta'$). If $\omega_0 := \sum_{\alpha \in \Delta} \omega_{\alpha}$ then for $n \gg 0$, $\omega = n\omega_0 \in Y(\mbf T)$. If $\varpi_{K}$ is a uniformizer in $K$ then $t = \omega(\varpi_K)$ works because $\alpha(\omega(\varpi_K)) = \varpi_K^n$.}
\end{proof}

\begin{prop}\label{prop:exponentiate-action}
If $V$ is a finite-dimensional locally analytic representation of $B$ then for all $Y \in \mf n$ and all $v \in V$ the sum $\sum {1\over n!}Y^n(v)$ converges and
\begin{equation}\label{eqn:exponential-equal}
\exp(Y)(v) = \sum {1\over n!}Y^n(v).
\end{equation}
\end{prop}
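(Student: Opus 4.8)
The plan is to reduce the convergence of $\sum \frac{1}{n!} Y^n(v)$ for an \emph{arbitrary} $Y \in \mf n$ to the convergence on a small neighborhood of $0$, which is already guaranteed for each individual $v$ by the general locally analytic machinery recalled above (the existence of $\Omega_v \subset \mf n$ with properties (1)--(3)). The bridge between "small $Y$" and "all $Y$" is the adjoint action of $T$: by Lemma \ref{lemm:open-nbhd}, the $T$-orbit of any non-empty open neighborhood of $0$ in $\mf n$ exhausts $\mf n$, so given $Y$ there exists $t \in T$ with $\Ad(t^{-1})(Y) \in \Omega_v$ — equivalently, writing $Z := \Ad(t^{-1})(Y)$, we have $Y = \Ad(t)(Z)$ with $Z$ in the good neighborhood.

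First I would fix $v \in V$ and pick $\Omega_v$ as above. For arbitrary $Y \in \mf n$, apply Lemma \ref{lemm:open-nbhd} to the vector $v$: there is $t \in T$ with $Z := \Ad(t^{-1})(Y) \in \Omega_v$, so $\sum \frac{1}{n!} Z^n(t^{-1}v)$ converges (here I use that $t^{-1}v$ is just some vector in $V$, and $\Omega_v$ can be chosen uniformly for all vectors since $V$ is finite-dimensional — or simply re-apply the general fact with $t^{-1}v$ in place of $v$ and intersect the two neighborhoods, then use Lemma \ref{lemm:open-nbhd} again). Then Lemma \ref{lemm:convergence}, applied with $h = t$, $X = Z$ and the vector $t^{-1}v$, shows that $\sum \frac{1}{n!}(\Ad(t)Z)^n(t \cdot t^{-1}v) = \sum \frac{1}{n!} Y^n(v)$ converges, and moreover equals $t \cdot \big(\sum \frac{1}{n!} Z^n(t^{-1}v)\big) = t \cdot \exp(Z)(t^{-1}v)$, where the last identity uses property (3) of $\Omega_v$. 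Since $\exp$ is a group homomorphism on $\mf n$ compatible with the adjoint action (indeed $t \exp(Z) t^{-1} = \exp(\Ad(t)Z) = \exp(Y)$ in $N$, using the algebraic exponential bijection $\exp: \underline{\mf n} \simeq \mbf N$ recalled before Lemma \ref{lemm:open-nbhd}), we get $t \cdot \exp(Z)(t^{-1}v) = \exp(Y)(v)$, which is exactly \eqref{eqn:exponential-equal}.

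The one point requiring a little care — and the main (though minor) obstacle — is the uniformity of the neighborhood $\Omega_v$: a priori it depends on the vector, so after translating by $t^{-1}$ one must make sure $t^{-1}v$ still lands in an admissible neighborhood. The clean way around this is to use finite-dimensionality of $V$: choose a basis $v_1, \dots, v_d$, take $\Omega := \bigcap_i \Omega_{v_i}$, and observe that properties (1)--(3) then hold for $\Omega$ and \emph{every} $v \in V$ by $\mbf Q_p$-linearity (the series $\sum \frac{1}{n!}X^n(-)$ is linear in the vector argument and $\exp(X)(-)$ is linear, and a finite sum of convergent series converges). With this uniform $\Omega$ in hand, the argument above goes through verbatim: for any $v$ and any $Y$, pick $t \in T$ with $\Ad(t^{-1})(Y) \in \Omega$ via Lemma \ref{lemm:open-nbhd}, and conclude by Lemma \ref{lemm:convergence}. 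Nothing else is needed; the proof is essentially a two-line application of the two preceding lemmas once the uniformity is recorded.
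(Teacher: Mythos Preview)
Your proof is correct and follows essentially the same route as the paper: take a uniform neighborhood $\Omega = \bigcap \Omega_{v_i}$ using a basis of $V$, invoke Lemma~\ref{lemm:open-nbhd} to write any $Y$ as $\Ad(t)Z$ with $Z \in \Omega$, and then apply Lemma~\ref{lemm:convergence}. The only cosmetic difference is that the paper first records separately that $\mf n$ acts nilpotently on $V$ (via Lie's theorem) to get convergence, whereas you extract convergence directly from Lemma~\ref{lemm:convergence}; your version is slightly more economical and equally valid.
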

\begin{proof}
Since $V$ is finite-dimensional and $\mathfrak b$ is solvable, we can upper triangularize the action of $\mathfrak b$ after extending scalars (Lie's theorem). But, since $\mathfrak n$ is diagonalized under the bracket action of $\mathfrak t$ it is clear the $\mathfrak n$ acts nilpotently on $V$ after, and thus also before, extending scalars. Thus the sum $\sum {1\over n!}Y^n(v)$ converges for all $v\in V$ and $Y\in \mathfrak n$.

If $v_1,\dotsc,v_r$ is a basis of $V$ then there exists non-empty open neighborhoods $\Omega_i \subset \mathfrak n$ of 0 on which \eqref{eqn:exponential-equal} holds for all $Y \in \Omega_i$ and $v=v_i$. By linearity, \eqref{eqn:exponential-equal} holds for all $v \in V$ and $Y \in \Omega:=\bigcap \Omega_i$. But then Lemma \ref{lemm:convergence} implies \eqref{eqn:exponential-equal} holds for all $v \in V$ and $Y$ in the orbit of $\Omega$ under the adjoint action of $T$, which is all of $\mathfrak n$ by Lemma \ref{lemm:open-nbhd}.
\end{proof}

\begin{rema}
Proposition \ref{prop:exponentiate-action} is an analog of the well-known statement:\ if $V$ is a smooth admissible representation of $B$ then $N$ acts trivially on $V$ (see \cite[Lemma 13.2.3]{boy} for example). 
\end{rema}

\begin{coro}\label{coro:finite-dimensional-equivariance}
Let $V$ and $W$ be finite-dimensional locally analytic representations of $B$.
\begin{enumerate}
\item The natural inclusion $\Hom_B(V,W) \subset \Hom_{(\mf b,T)}(V,W)$ is an equality.
\item if $V_0 \subset V$ is a $(\mf b,T)$-stable subspace then $V_0$ is a $B$-stable subspace.
\end{enumerate}
\end{coro}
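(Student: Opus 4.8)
The plan is to reduce everything to Proposition \ref{prop:exponentiate-action}, which says that the action of $N$ on a finite-dimensional locally analytic representation of $B$ is recovered by exponentiating the action of $\mf n$. Part (2) is really the crux, and part (1) follows formally from it, so I would organize the argument around (2).

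For part (2): let $V_0 \subset V$ be a subspace stable under both $\mf b$ (hence under $\mrm U(\mf b_L)$) and $T$. I must show $V_0$ is stable under $B$. Since $B = T N$ (as a set, using the Levi-type decomposition of the Borel; more precisely every element of $B$ is a product of an element of $T$ and an element of $N$, because $\mbf B = \mbf T \mbf N$), and $V_0$ is already assumed $T$-stable, it suffices to show $V_0$ is $N$-stable. Now given $n \in N$, write $n = \exp(Y)$ for a unique $Y \in \mf n$, using the everywhere-convergent exponential $\exp: \mf n \simeq N$ discussed before Lemma \ref{lemm:open-nbhd}. For any $v \in V_0$, Proposition \ref{prop:exponentiate-action} gives $n(v) = \exp(Y)(v) = \sum \frac{1}{n!} Y^n(v)$. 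Each partial sum $\sum_{k=0}^{m} \frac{1}{k!} Y^k(v)$ lies in $V_0$ because $V_0$ is $\mf n$-stable (as $\mf n \subset \mf b$) and $L$-linear. Finally $V_0$ is closed: it is a linear subspace of the finite-dimensional space $V$, hence closed by the remark opening Section \ref{subsec:locally-anal-reminder}. Therefore the limit $n(v)$ lies in $V_0$, proving $N$-stability and hence $B$-stability.

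For part (1): the inclusion $\Hom_B(V,W) \subset \Hom_{(\mf b,T)}(V,W)$ is clear, since any continuous $B$-equivariant map is automatically $T$-equivariant and, by differentiating, $\mf b$-equivariant (and continuity is automatic between finite-dimensional spaces). For the reverse inclusion, let $\phi: V \to W$ be $(\mf b, T)$-equivariant. Apply part (2) to the graph $\Gamma_\phi = \{(v,\phi(v)) : v \in V\} \subset V \oplus W$, which is a $(\mf b, T)$-stable subspace of the finite-dimensional locally analytic $B$-representation $V \oplus W$: the $\mf b$-equivariance of $\phi$ makes $\Gamma_\phi$ stable under $\mf b$, and the $T$-equivariance makes it stable under $T$. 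By part (2), $\Gamma_\phi$ is $B$-stable, which is exactly the statement that $\phi$ is $B$-equivariant. Alternatively, and perhaps more transparently, one argues directly as in part (2): for $n = \exp(Y) \in N$ and $v \in V$ one has $\phi(n v) = \phi(\sum \frac{1}{k!} Y^k v) = \sum \frac{1}{k!} Y^k \phi(v) = n \phi(v)$, using continuity of $\phi$ to pass the limit through and $\mf n$-equivariance of $\phi$ term-by-term; combined with $T$-equivariance and $B = TN$ this gives $B$-equivariance.

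The only genuinely delicate point is the interchange of the $B$-action (or $\phi$) with the infinite sum $\sum \frac{1}{n!} Y^n(v)$, and this is handled entirely by two inputs already available: convergence of the series and the identification with $\exp(Y)(v)$ is Proposition \ref{prop:exponentiate-action}, and the closedness needed to conclude the limit stays in $V_0$ (resp. the continuity of $\phi$) is the elementary fact that finite-dimensional $\mbf Q_p$-vector spaces carry their finest convex topology, in which all subspaces are closed and all linear maps continuous. So I expect no real obstacle; the proof is a short assembly of Proposition \ref{prop:exponentiate-action}, the decomposition $B = TN$, and $L$-linearity.
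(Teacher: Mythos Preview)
Your proof is correct and follows essentially the same approach as the paper: reduce to $N$-equivariance via $B = NT$, then invoke Proposition \ref{prop:exponentiate-action} together with the fact that subspaces of finite-dimensional spaces are closed (for (2)) and linear maps are continuous (for (1)). The graph argument you give for (1) is a pleasant alternative packaging, but your direct argument matches the paper's exactly.
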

\begin{proof}
Since $B = NT$ it suffices in (1) to show that every $(\mf b,T)$-equivariant linear map $V \ra W$ is $N$-equivariant, and in (2) it suffices to show that $V_0$ is $N$-stable. Both are immediate from Proposition \ref{prop:exponentiate-action} after remarking that linear maps are continuous in (1) and the subspace $V_0$ in (2) is closed (since everything is finite-dimensional).
\end{proof}

We make a mild generalization of Proposition \ref{prop:exponentiate-action} and Corollary \ref{coro:finite-dimensional-equivariance} suitable for the Verma modules we construct in the next section. Recall that if $(M_i)_{i \in I}$ is a directed system of locally analytic representations of a $p$-adic Lie group $H$ with injective transition maps then when we equip $M = \varinjlim M_i$ with its locally convex inductive limit topology and its natural action of $H$, it becomes an $L$-linear locally analytic representation itself (see \cite[Lemma 3.1.4(iii)]{em2} for example). If each $M_i$ is finite-dimensional then this is the same as equipping $M$ with its finest convex topology.
\begin{prop}\label{prop:directed-system}
Suppose that $M$ is a locally analytic representation of $B$ which is isomorphic to the direct limit $M = \varinjlim M_i$ of an inductive system $(M_i)_{i \in I}$ of finite-dimensional locally analytic representations $M_i$ of $B$ with injective transition maps.
\begin{enumerate}
\item If $m \in M$ and $Y \in \mf n$ then $\sum {1\over n!} Y^n(m)$ converges and is equal to $\exp(Y)(m)$.
\item If $M' = \lim M_i'$ is another such directed system then the natural inclusion
\begin{equation*}
\Hom_B(M,M') \subset \Hom_{(\mf b,T)}(M,M')
\end{equation*}
is an equality.
\item A linear subspace $M_0 \subset M$ is $B$-stable if and only if it is $(\mf b,T)$-stable.
\end{enumerate}
\end{prop}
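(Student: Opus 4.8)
The plan is to bootstrap from the finite-dimensional case already treated in Proposition \ref{prop:exponentiate-action} and Corollary \ref{coro:finite-dimensional-equivariance}. The key structural observation is that, because the transition maps of $(M_i)_{i\in I}$ are injective, each canonical map $\iota_i\colon M_i\to M$ is an injective, continuous, $B$-equivariant map which is a topological embedding (the finest convex topology on the finite-dimensional $M_i$ agrees with the subspace topology it inherits from the inductive limit), its image is a $B$-stable and $\mf b$-stable finite-dimensional subspace of $M$, and $M=\bigcup_i\iota_i(M_i)$. I will also use the elementary fact that every finite-dimensional subspace of such a direct limit already lies inside some single $\iota_j(M_j)$: pick finitely many elements spanning it, realise each in some term, and use that $I$ is directed to find one term containing them all.

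For (1) I would fix $m\in M$ and $Y\in\mf n$, choose $i$ with $m\in\iota_i(M_i)$, and note that since $\iota_i(M_i)$ is $\mf b$-stable all partial sums $\sum_{k\leq N}\tfrac1{k!}Y^k(m)$ stay in $\iota_i(M_i)$. Proposition \ref{prop:exponentiate-action} applied to the finite-dimensional locally analytic representation $\iota_i(M_i)$ of $B$ then gives convergence there and the identity with $\exp(Y)(m)$; continuity of $\iota_i$ together with $B$-equivariance transports both statements to $M$.

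For (2) and (3) I would again restrict to the finite-dimensional layers. Given a continuous $(\mf b,T)$-equivariant $f\colon M\to M'$, since $B=NT$ and $f$ is $T$-equivariant it suffices to check that $f\circ\iota_i$ is $B$-equivariant for all $i$; but $f(\iota_i(M_i))$ is finite-dimensional, hence contained in $\iota'_j(M'_j)$ for some $j$, so $f\circ\iota_i$ factors through a continuous $(\mf b,T)$-equivariant map $M_i\to M'_j$ of finite-dimensional representations of $B$, which is $B$-equivariant by Corollary \ref{coro:finite-dimensional-equivariance}(1). For (3), one direction is formal (differentiate the $B$-action); for the converse, given a $(\mf b,T)$-stable subspace $M_0\subset M$ — which need not be closed, so one cannot argue directly by taking limits of partial sums — I would set $M_{0,i}:=\iota_i^{-1}(M_0)$, a finite-dimensional $(\mf b,T)$-stable subspace of $M_i$, hence $B$-stable by Corollary \ref{coro:finite-dimensional-equivariance}(2); since $M_0=\bigcup_i\iota_i(M_{0,i})$ is then a union of $B$-stable subspaces, $M_0$ is itself $B$-stable.

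No step involves a new idea beyond the finite-dimensional results; the only point requiring care is the topological bookkeeping for locally convex inductive limits with injective transition maps — that each $M_i$ embeds topologically, that $M$ and $M'$ are the unions of these images, and that finite-dimensional subspaces are captured in single terms — which is standard (compare \cite[Lemma 3.1.4(iii)]{em2} and the remark that a direct limit of finite-dimensional spaces carries its finest convex topology). I expect this to be the main, and only mild, obstacle.
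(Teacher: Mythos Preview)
Your proposal is correct, and for parts (1) and (3) it is essentially identical to the paper's proof. One small remark: your parenthetical that $M_0$ ``need not be closed'' is actually mistaken---since $M$ carries its finest convex topology, every linear subspace is closed---but this does not affect your argument, which proceeds exactly as the paper does by pulling back along each $\iota_i$ and invoking Corollary \ref{coro:finite-dimensional-equivariance}(2).

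For part (2) you take a slightly different route than the paper. You factor $f\circ\iota_i$ through a finite-dimensional target $M_j'$ and then quote Corollary \ref{coro:finite-dimensional-equivariance}(1). The paper instead argues directly: $f$ is automatically continuous (source has finest convex topology), so for $g=\exp(Y)\in N$ one passes through the series $\sum\tfrac{1}{n!}Y^n(-)$ on both sides using part (1) and $\mf n$-equivariance of $f$. Both arguments are short; yours needs the extra observation that a finite-dimensional subspace of $M'$ sits in a single term of the system, while the paper's avoids this at the cost of invoking continuity and the series identity on both $M$ and $M'$.
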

\begin{proof}
Write $\psi_i: M_i \ra M$ for the canonical maps arising from the directed system. They are continuous, $B$-equivariant and every $m\in M$ is in the image of some $\psi_i$.

Let us prove (1). Let $m \in M$ and choose an $i$ such that $m = \psi_i(m_i)$ for some $m_i \in M_i$. The $B$-representation $M_i$ satisfies the hypotheses of Proposition \ref{prop:exponentiate-action}. Since $\psi_i$ is continuous we conclude that $\sum {1\over n!} Y^n(m) = \psi_i\left(\sum {1\over n!} Y^n(m_i)\right)$ converges in $M$, and since $\psi_i$ is $B$-equivariant we conclude it is equal to $\exp(Y)(m) = \psi_i(\exp(Y)(m_i))$.

To prove (2) we let $f: M \ra M'$ be a $(\mf b,T)$-equivariant map and we need to show that $f$ is $B$-equivariant. Since $B = NT$ it suffices to show that $f$ is $N$-equivariant. Let $m \in M$ and suppose that $g \in N$. Choose $Y \in \mf n$ such that $g = \exp(Y)$. By part (1) of this proposition we have $g(m) = \exp(Y)(m) = \sum {1\over n!} Y^n(m)$. Note that $f$ is automatically continuous since $M$ and $M'$ are each equipped with the directed limit topology of finite-dimensional subspaces. In particular, since $f$ is assumed to be $\mrm U(\mf n)$-equivariant we deduce that $f(g(m)) = \sum {1\over n!} Y^n(f(m))$. On the other hand, part (1) of this proposition applied to $M'$ implies this is equal to $g(f(m))$. This concludes the proof of (2).

For (3), any subspace $M_0 \subset M$ is automatically closed because $M$ is equipped with its finest convex topology. Thus, if $M_0$ is $B$-stable then it is clearly $(\mf b,T)$-stable also. Suppose we only know that $M_0$ is $(\mf b,T)$-stable. For each $i$, the subspace $\psi_i^{-1}(M_0) \subset M_i$ is a $(\mf b,T)$-stable subspace of $M_i$, and thus Corollary \ref{coro:finite-dimensional-equivariance}(2) implies that $\psi_i^{-1}(M_0)$ is $B$-stable. Now, if $m_0 \in M_0$ we can choose $i$ such that $m_0 = \psi_i(m_{i})$ for some $m_i \in M_i$, necessarily in $\psi_i^{-1}(M_0)$. In particular, for each $b \in B$, $bm_i \in \psi_i^{-1}(M_0)$. But $\psi_i$ is $B$-equivariant and thus $b(m) = \psi_i(bm_i) \in \psi_i(\psi_i^{-1}(M_0)) \subset M_0$. This concludes the proof of (3).
\end{proof}

\section{Composition series of some Verma modules}\label{sec:verma-mods-fd}

We maintain the notations used in Section \ref{subsec:locally-anal-reminder}, and we will begin studying certain locally analytic actions of parabolic subgroups $P$ on $L$-vector spaces. Thus we will often refer to Section \ref{subsec:verma-reminder} with regard to the Lie algebras $\mf g_L$, $\mf t_L$, etc.\ We denote by $\Ad$ throughout the $L$-linear representation of $G$ on $\mf g_L$ induced by the usual adjoint action of $G$ on $\mf g$, viewed only $\mbf Q_p$-linearly and then extended $L$-linearly. Under the decomposition $\mf g_L \simeq \bigoplus_{\sigma \in \mc S} \mf g\otimes_{K} L_\sigma$, on the $\sigma$-component of the sum we have $G$ acting $K$-linearly on $\mf g$ and the action is extended $L$-linearly via $\sigma: K \rightarrow L$. We also use $\Ad$ for the action of $G$ on $\mrm U(\mf g_L)$.

\subsection{An extension of Verma module theory}\label{subsec:verma-fd}
\begin{defi}\label{defi:loc-anal-defi}
An $L$-linear locally analytic $(\mf g,P)$-module is an $L$-linear locally analytic representation $M$ of $P$ equipped with an auxiliary continuous action of $\mrm U(\mf g_L)$ such that
\begin{enumerate}
\item The derived action of $P$ on $M$ agrees with the restriction of the action of $\mrm U(\mf g_L)$ to the action of $\mrm U(\mf p_L)$.
\item For all $X \in \mrm U(\mf g_L)$, $g \in P$ and $m \in M$ we have $(\Ad(p)X)(p(m)) = p(X(m))$.
\end{enumerate}
A morphism of locally analytic $(\mf g,P)$-modules is a continuous $L$-linear morphism equivariant for the actions of $P$ and $\mrm U(\mf g_L)$.
\end{defi}
For example if $M$ is an $L$-linear locally analytic representation of $G$ itself then the restriction of the action of $G$ to the action of $P \subset G$ results in an $L$-linear locally analytic $(\mf g,P)$-module. We remark that if $M$ is a locally analytic $(\mf g,P)$-module and $k\geq 0$ then $M[\mf n_{P,L}^k] \subset M$ is a $P$-stable subspace of $M$ and $\mf n_{P^-,L}^k\cdot M$ is $L_P$-stable (but may be not $P$-stable).

Recently, Orlik and Strauch have studied interesting examples of locally analytic $(\mf g,P)$-modules \cite{os-old,os}.
\begin{defi}\label{defi:OP}
The category $\mc O^P$ is the full subcategory of $L$-linear locally analytic $(\mf g,P)$-modules $M$ such that:
\begin{enumerate}
\item $M$ is topologically the union of finite-dimensional $P$-stable subrepresentations.
\item The $\mrm U(\mf g_L)$-module $M$ lies in the BGG category $\mc O^{\mf p_L}$ defined in Section \ref{subsec:verma-reminder}.
\end{enumerate}
\end{defi}
\begin{rema}
By Definition \ref{defi:OP}(1), each $M$ in $\mc O^P$ is equipped with its finest convex topology. Thus continuity is automatic for morphisms in $\mc O^P$.
\end{rema}
\begin{rema}
Objects in $\mathcal O^G$ are finite-dimenisonal $L$-linear locally analytic representations of $G$ on which $\mathfrak t_L$ acts diagonalizably.
\end{rema}
\begin{exam}\label{exam:generalized-verma-modules}
Suppose that $U$ is an $L$-linear finite-dimensional locally analytic representation of the group $P$ and the Lie algebra $\mf t_L$ acts diagonalizably on $U$. Then we endow $M_{\mf p_L}(U) := \mrm U(\mf g_L) \otimes_{\mrm U(\mf p_L)} U$ with an action of $P$ by 
\begin{equation*}
g(X\otimes u) = \Ad(g)X\otimes g(u), \;\;\;\;\;\;\; g \in P,\;\; X \in \mrm U(\mf g_L), \;\;u \in U.
\end{equation*}
One must check that this is well-defined (it respects the relations implicit in a tensor product) but that is easy. We equip $M_{\mf p_L}(U)$ with its finest convex topology. This is reasonable:\ multiplication induces an isomorphism $\mrm U(\mf p_L)\otimes_L \mrm U(\mf n_{P^-,L}) \simeq \mrm U(\mf g_L)$, and so $M_{\mf p_L}(U) \simeq \mrm U(\mf n_{P^-,L})\otimes_L U$ (algebraically) and a homeomorphism when $\mrm U(\mf n_{P^-,L})$ is equipped its finest convex topology (which is natural) and the tensor product is given its inductive tensor product topology. We have made $M_{\mf p_L}(U)$ into an $L$-linear locally analytic $(\mf g,P)$-module (since the adjoint action is $\mbf Q_p$-algebraic and the action on $U$ is locally analytic).

One can see $M_{\mf p_L}(U)$ lies in $\mc O^P$. Indeed, it is clear that the underlying $\mrm U(\mf g_L)$-module $M_{\mf p_L}(U)$ lies in $\mc O^{\mf p_L}$ (Definition \ref{defi:OP}(2)). In particular, $M_{\mf p_L}(U) = M_{\mf p_L}(U)[\mf n_{P,L}^\infty]$. Since we endow $M_{\mf p_L}(U)$ with its finest convex topology, it is thus topologically the union of the finite-dimensional $P$-stable subspaces $M_{\mf p}(U)[\mf n_{P,L}^k]$ ranging over $k\geq 0$. Thus condition (1) in Definition \ref{defi:OP} is satisfied also. We refer to $M_{\mf p}(U)$ as the generalized Verma module associated to $U$.
\end{exam}
\begin{rema}
Every object $M$ in $\mc O^P$ is a quotient of $M_{\mf p_L}(U)$ for some $U \subset M$ a $P$-stable subrepresentation (see \cite[Lemma 2.8]{os}).
\end{rema}
If $M,M' \in \mc O^P$ then we write $\Hom_{\ast}(M,M')$ to mean $L$-linear morphisms in $\mc O^P$ equivariant for the action(s) of $\ast$. For example, $\Hom_{(\mf g,P)}(M,M')$ is just the set of morphisms in $\mc O^P$ (of course, being $L$-linear and $\mf g$-equivariant is the same as being $\mf g_L$-equivariant).
\begin{prop}\label{prop:homs-ok}
Let $M \in \mc O^P$.
\begin{enumerate}
\item If $M \in \mc O^P$ then for every $m \in M$ and $Y \in \mf n_P$ the sum $ \sum {1\over n!} Y^n(m)$ converges and $\exp(Y)(m) = \sum {1\over n!} Y^n(m)$.
\item If $M' \in \mc O^P$ also then the natural inclusions
\begin{equation*}
\Hom_{(\mf g,P)}(M,M') \subset \Hom_{(\mf g,L_P)}(M,M') \subset \Hom_{(\mf g,T)}(M,M')
\end{equation*}
are equalities.
\item If $M_0 \subset M$ is a linear subspace then $M_0$ is $(\mf p,T)$-stable if and only if $M_0$ is $P$-stable. In particular, $M_0$ is $(\mf g,T)$-stable if and only if $M_0$ is $(\mf g,P)$-stable.
\end{enumerate}
\end{prop}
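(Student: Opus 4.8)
The plan is to reduce everything to the finite-dimensional situation already handled in Corollary~\ref{coro:finite-dimensional-equivariance} and Proposition~\ref{prop:directed-system}, exactly as was done for the Borel case, but now keeping track of the extra $\mrm U(\mf g_L)$-action. First I would prove (1). Since $M$ lies in $\mc O^P$, it is topologically the union of finite-dimensional $P$-stable subrepresentations $M_i$, and the transition maps are (closed) inclusions, so $M = \varinjlim M_i$ as a locally analytic representation of $P$, hence in particular of $B$. Each $M_i$ is a finite-dimensional locally analytic representation of $P$, hence of $B = NT$, so Proposition~\ref{prop:exponentiate-action} applies to $M_i$: for $Y \in \mf n \subset \mf n_P$ and $m \in M_i$ one has $\exp(Y)(m) = \sum \tfrac1{n!} Y^n(m)$. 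Now for a general $m \in M$ pick $i$ with $m \in M_i$; by continuity and $P$-equivariance of the inclusion $M_i \hookrightarrow M$ (which intertwines both the group action and, since the $\mrm U(\mf g_L)$-action on each $M_i$ restricts compatibly, the relevant $Y$-action), the sum converges in $M$ and equals $\exp(Y)(m)$. This is just Proposition~\ref{prop:directed-system}(1) applied with $H = B$, once one notes $\mf n \subset \mf n_P$; to get the full statement for $Y \in \mf n_P$ one either repeats the argument with $B$ replaced by $P$ throughout (the finite-dimensional input being the direct analogue of Proposition~\ref{prop:exponentiate-action} for $P$ in place of $B$, which holds because $\mf n_P \subset \mf n$ and $\mf n$ acts nilpotently on any finite-dimensional locally analytic $B$-representation) or observes that the $T$-orbit of $\mf n_P$-directions already exhausts $\mf n_P$.

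Next I would prove (2). The inclusions of $\Hom$-spaces are obvious, so only the reverse containments need proof, and by transitivity it suffices to show $\Hom_{(\mf g,T)}(M,M') \subset \Hom_{(\mf g,P)}(M,M')$. So let $f\colon M \to M'$ be $L$-linear, $\mrm U(\mf g_L)$-equivariant, and $T$-equivariant; we must show it is $P$-equivariant. Write $P = N_P L_P$ and $L_P = N_{L_P} Z(L_P)$-style decompositions as needed, but more efficiently: $P$ is generated by $T$, $N_P$, and the root subgroups inside $L_P$; since $f$ is already $\mrm U(\mf g_L)$-equivariant it is in particular $\mrm U(\mf p_L)$-equivariant, and the derived action of $P$ on $M$ and $M'$ is by Definition~\ref{defi:loc-anal-defi}(1) the restriction of the $\mrm U(\mf g_L)$-action. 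Now any $g \in P$ can be written (using the exponential, which converges on all of $N_P$ and on the unipotent parts of $L_P$) as a product of elements $\exp(Y)$ with $Y$ in the nilradical $\mf n_{\mf b}$ of the Borel together with an element of $T$. For each such factor, part (1) of this proposition gives $\exp(Y)(m) = \sum \tfrac1{n!} Y^n(m)$; since $f$ is $\mrm U(\mf n_{\mf b,L})$-equivariant and continuous (automatic, as $M,M'$ carry their finest convex topologies by the Remark after Definition~\ref{defi:OP}), $f$ commutes with each $\exp(Y)$, and by hypothesis with $T$; multiplying through, $f$ commutes with all of $P$. The cleanest packaging: $f$ is a $(\mf b, T)$-equivariant map between the directed systems $M = \varinjlim M_i$, $M' = \varinjlim M_i'$, so Proposition~\ref{prop:directed-system}(2) gives $B$-equivariance; combined with $\mrm U(\mf g_L)$-equivariance and the fact that $P = B \cdot N_{P^-}\cap \ldots$—rather, that $P$ is generated by $B$ and the remaining root groups of $L_P$, whose exponentials are handled identically—one upgrades $B$-equivariance to $P$-equivariance.

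For (3), one argues the same way, or deduces it from (2) by a standard trick: a linear subspace $M_0 \subset M$ is automatically closed (finest convex topology), and $(\mf p,T)$-stability clearly follows from $P$-stability by differentiating. Conversely, if $M_0$ is $(\mf p,T)$-stable, intersect with each finite-dimensional $P$-stable $M_i$: $M_0 \cap M_i$ is $(\mf p,T)$-stable in $M_i$, hence—by the finite-dimensional statement analogous to Corollary~\ref{coro:finite-dimensional-equivariance}(2) for $P$ in place of $B$, which holds since the relevant unipotent Lie algebras act nilpotently and $\exp$ converges—it is $P$-stable; taking the union recovers $M_0$ as $P$-stable. The final assertion, that $(\mf g,T)$-stability is equivalent to $(\mf g,P)$-stability, is then immediate: add the $\mf g$-action to both sides. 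The main obstacle, such as it is, is bookkeeping: making sure that the $\mrm U(\mf g_L)$-equivariance hypothesis in (2)–(3) genuinely lets one replace ``$B$'' by ``$P$'' — i.e.\ that the extra root subgroups of $L_P$ not contained in $B$ are handled by the same exponential-series argument — and being careful that the compatibility (Definition~\ref{defi:loc-anal-defi}(2)) and the union-of-finite-dimensionals structure (Definition~\ref{defi:OP}(1)) are exactly what make the finite-dimensional lemmas applicable. None of this is deep; it is a matter of reassembling the pieces already present in Section~\ref{subsec:locally-anal-reminder}.
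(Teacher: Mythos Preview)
Your approach is essentially the paper's, and parts (1) and the $N_P$-half of (2)--(3) are correct as you wrote them. The one place where you are imprecise is exactly the spot you yourself flag: upgrading from $B$-equivariance to $P$-equivariance. You write that any $g \in P$ is a product of $\exp(Y)$ with $Y \in \mf n_{\mf b}$ together with an element of $T$, but this is false---the negative root subgroups of $L_P$ are not of this form---and your patch (``the remaining root groups of $L_P$, whose exponentials are handled identically'') is not quite a proof, because Proposition~\ref{prop:directed-system} is stated only for $B$ and you have not said what ``identically'' means for root groups lying in $B^-$.

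The paper's clean resolution is a one-line structural fact: $L_P$ is generated as a group by its two Borel subgroups $B_{L_P} = L_P \cap B$ and $B_{L_P}^-$. Since each $M_i$ is $P$-stable, it is $L_P$-stable and hence both $B_{L_P}$- and $B_{L_P}^-$-stable; one then applies Proposition~\ref{prop:directed-system} \emph{twice}, once with the reductive group $L_P$ and Borel $B_{L_P}$, once with Borel $B_{L_P}^-$. This gives $L_P$-equivariance (respectively $L_P$-stability) directly from $(\mf l_P,T)$-equivariance, and then $N_P$ is handled exactly as you say via $N_P \subset B$. So your outline is right, but you should replace the hand-waving about ``remaining root groups'' with this generation statement; it is the only genuinely new ingredient beyond the Borel case. (A small slip: early in (1) you write $\mf n \subset \mf n_P$, but the inclusion goes the other way since $P \supset B$.)
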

\begin{proof}
For (1), the convergence statement is immediate because $M$ is in $\mathcal O^P$. The agreement with the exponential action is immediate from Proposition \ref{prop:directed-system}(1).

We will also use Proposition \ref{prop:directed-system} to show (2). We will separately prove that each inclusion is an equality. Suppose first that $f: M \ra M'$ is $L$-linear and $(\mf g,L_P)$-equivariant, and we want to show that $f$ is $P$-equivariant. Since $P = N_PL_P$ and $f$ is $L_P$-equivariant, it is enough to show $f$ is $N_P$-equivariant. Since $M$ and $M'$ are in $\mc O^P$, they are topologically an inductive limit of $P$-stable, and thus $B$-stable, subrepresentations. Since $\mf b \subset \mf g$ and $T \subset L_P$, $f$ is automatically $(\mf b,T)$-stable. By Proposition \ref{prop:directed-system}(2) we deduce that $f$ is $B$-equivariant. But this is enough to show $f$ is $N_P$-equivariant since $N_P \subset N \subset B$. This proves our first equality.

The proof of the second equality follows the same lines. Suppose that $f: M \ra M'$ is $(\mf g,T)$-equivariant. Let $\mbf B_{\mbf L_{\mbf P}} \subset \mbf L_{\mbf P}$ be the Borel subgroup $\mbf L_{\mbf P} \cap \mbf B$ of $\mbf L_{\mbf P}$ and let $\mbf B_{\mbf L_{\mbf P}}^{-}$ be its opposite. Then the group $L_P$ is generated as a group by $B_{L_P}$ and $B_{L_P}^-$.\footnote{This follows from the Bruhat decomposition \cite[Section 1.9]{jantzen} and the fact that Weyl group representatives can be taken from the group generated by $B_{L_P}$ and $B_{L_P}^-$ \cite[Sections 1.3-1.4]{jantzen}.} By Proposition \ref{prop:directed-system}(2), $f$ is $B_{L_P}$ and $B_{L_P}^-$-equivariant and thus $L_P$-equivariant by the previous sentence.

The proof of (3) follows the lines of (2). Indeed, if $M_0$ is $(\mf p,T)$-stable then $M_0$ is $(\mf p,L_P)$-stable using the fact that $L_P$ is generated by $B_{L_P}$ and $B_{L_P}^-$ and Proposition \ref{prop:directed-system}(3) applied to either Borel subgroup. Since $M_0$ is $(\mf p, L_P)$-stable, and it is automatically closed in $M$, it is $P$-stable by part (1) of this proposition. This concludes the proof.
\end{proof}

\begin{coro}\label{coro:fully-faithful}
If $\mbf P \subset \mbf Q$ are two standard parabolics then the forgetful inclusion $\mc O^Q \subset \mc O^P$ is fully faithful.
\end{coro}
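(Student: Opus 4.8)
The statement to prove is Corollary~\ref{coro:fully-faithful}: if $\mbf P \subset \mbf Q$ are standard parabolics, the forgetful functor $\mc O^Q \subset \mc O^P$ is fully faithful.

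\medskip

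The plan is to unwind exactly what ``fully faithful'' means here and reduce it to the $\Hom$-comparison already established in Proposition~\ref{prop:homs-ok}(2). First, I would check that the forgetful map is well-defined on objects: if $M \in \mc O^Q$, then $M$ is a locally analytic $(\mf g, Q)$-module, and restricting the $Q$-action to $P \subset Q$ makes it a locally analytic $(\mf g, P)$-module; condition (2) of Definition~\ref{defi:OP} is unchanged (the underlying $\mrm U(\mf g_L)$-module lies in $\mc O^{\mf q_L} \subset \mc O^{\mf p_L}$ since $\mf p_L \subset \mf q_L$), and condition (1) is likewise inherited since a topological union of finite-dimensional $Q$-stable subrepresentations is in particular a topological union of finite-dimensional $P$-stable subrepresentations (each $Q$-stable subspace is $P$-stable). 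So the inclusion $\mc O^Q \hookrightarrow \mc O^P$ makes sense, and it is obviously injective on objects (the underlying data is literally the same, just with less structure remembered), hence faithful on morphisms is automatic. The content is fullness: every morphism in $\mc O^P$ between two objects of $\mc O^Q$ is automatically a morphism in $\mc O^Q$.

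\medskip

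For fullness, let $M, M' \in \mc O^Q$ and let $f \colon M \to M'$ be a morphism in $\mc O^P$, i.e. an $L$-linear continuous map that is $(\mf g, P)$-equivariant. I must show $f$ is $(\mf g, Q)$-equivariant, i.e. $Q$-equivariant (the $\mf g$-equivariance is already given). The key point is that Proposition~\ref{prop:homs-ok}(2) applies with $Q$ in the role of the parabolic: viewing $M, M' \in \mc O^Q$, the inclusion $\Hom_{(\mf g, Q)}(M, M') \subset \Hom_{(\mf g, T)}(M, M')$ is an equality. Since $P \supset B \supset T$, any $(\mf g, P)$-equivariant map is in particular $(\mf g, T)$-equivariant, hence lies in $\Hom_{(\mf g, T)}(M, M') = \Hom_{(\mf g, Q)}(M, M')$, which is what we wanted. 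Thus $f$ is a morphism in $\mc O^Q$, proving fullness, and the corollary follows.

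\medskip

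I do not anticipate a serious obstacle here; the whole point is that Proposition~\ref{prop:homs-ok}(2) has already done the real work by comparing all these $\Hom$-spaces down to the torus level, and the corollary is essentially a formal consequence. The only mild subtlety to state cleanly is that Proposition~\ref{prop:homs-ok} is formulated for a fixed parabolic $P$, so one must observe that its hypotheses and conclusion are equally valid when applied to the parabolic $Q$ and its objects $M, M' \in \mc O^Q$ --- there is nothing special about $P$ in that proposition, only that it is a standard parabolic. With that observation in hand the argument is a one-line chain of inclusions of $\Hom$-spaces.
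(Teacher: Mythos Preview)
Your proposal is correct and takes essentially the same approach as the paper: both reduce to Proposition~\ref{prop:homs-ok}(2), using that $\Hom_{(\mf g,Q)}(M,M') = \Hom_{(\mf g,T)}(M,M')$ and that any $(\mf g,P)$-equivariant map is $(\mf g,T)$-equivariant. The paper's proof is a touch more symmetric, applying Proposition~\ref{prop:homs-ok}(2) to both $Q$ and $P$ to obtain the chain $\Hom_{(\mf g,Q)} = \Hom_{(\mf g,T)} = \Hom_{(\mf g,P)}$, but your one-sided version is equally valid.
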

\begin{proof}
If $\mbf P \subset \mbf Q$ and $M,M' \in \mc O^Q$ then by Proposition \ref{prop:homs-ok} we have
\begin{equation*}
\Hom_{(\mf g,Q)}(M,M') = \Hom_{(\mf g,T)}(M,M') = \Hom_{(\mf g,P)}(M,M'),
\end{equation*}
as desired.
\end{proof}

\subsection{Duality}\label{subsec:duality}
Suppose that $M$ is an $L$-linear locally analytic $(\mf g,P)$-module and let $M^{\ast}$ be the $L$-linear dual equipped with its strong dual topology. Similar to the duality discussion in Section \ref{subsec:verma-reminder}, we can consider two separate actions of $\mrm U(\mf g_L)$ and $L_P$ on $M^{\ast}$. Recall the the group $\mbf G$ comes equipped with an inversion map $\iota: \mbf G \rightarrow \mbf G$ and a ``transpose'' map $\tau: \mbf G \rightarrow \mbf G$ which depends on our choice of $\mathbf B \supset \mathbf T$ (\cite[Section II.1.15]{jantzen}). These induce $\mbf Q_p$-algebraic maps on $G$ which we write $g \mapsto g^{-1}$ and $g\mapsto {}^{\tau} g$. Then, for all $x \in L_P$, $X \in \mrm U(\mf g_L)$, $f \in M^{\ast}$ and $m \in M$ we declare
\begin{equation*}
(x\cdot_{\iota} f)(m) := f(x^{-1}(m)) \;\;\;\; \;\;\;\; (X\cdot_{\iota} f)(m) := f(d\iota(X)(m))
\end{equation*}
or
\begin{equation*}
(x\cdot_{\tau} f)(m) := f({}^{\tau}\! x(m)) \;\;\;\; \;\;\;\; (X\cdot_{\tau} f)(m) := f(d\tau(X)(m)),
\end{equation*}
where $d\iota$ and $d\tau$ denote the respective $\mbf Q_p$-linear differentials (this aligns with Section \ref{subsec:verma-reminder}).

We denote $M^{\ast}$ with the first action as $M^{\ast,\iota}$, and $M^{\ast}$ with the second action as $M^{\ast,\tau}$. It is immediately verified that these separate actions of $\mrm U(\mf g_L)$ and $L_P$ are compatible in the sense of Definition \ref{defi:loc-anal-defi}(2).\footnote{\label{footnote:trans-inv-relation}It is helpful to remember, or derive, that for all $g \in G$ and $X \in \mrm U(\mf g$) we have $\Ad(g)d\iota(X) = d\iota(\Ad(g)X)$ and $d\tau(\Ad(g)X) = \Ad({}^\tau\!g^{-1})d\tau(X)$}\label{footnote:commutation-relation} 

If $M$ is equipped with its finest convex topology (e.g. if $M$ is in $\mc O^P$) then the strong and weak topology on $M^{\ast}$ agree, and the action of $L_P$ on either $M^{\ast,\iota}$ or $M^{\ast,\tau}$ is continuous. If $M$ is itself finite-dimensional then it is also easily checked that the action of $L_P$ is locally analytic (since $\iota$ and $\tau$ are algebraic maps of the underlying algebraic group $\Res_{K/\mbf Q_p} \mbf L_P$) and the differential action of $\mrm U(\mf l_{P,L})$ agrees with the restriction action of $\mrm U(\mf g_L)$ to $\mrm U(\mf l_{P,L})$. In an effort to upgrade to an action of $P$ we make the following definition.

\begin{defi}\label{defi:duals}
Let $M \in \mc O^P$.
\begin{enumerate}
\item The internal dual is $M^{\vee} := (M^{\ast,\tau})[{\mf n_{P,L}^\infty}]$.
\item The opposite dual is $M^- := (M^{\ast,\iota})[{\mf n_{P^-,L}^\infty}]$.
\end{enumerate}
\end{defi}
Note that for $Y \in \mf n_P$ and $m \in M$ the infinite sum $\sum {1\over n!} Y^n(m)$ converges on $M^\vee$ because it is in fact a finite sum (similarly for $M^-$).
\begin{prop}
Let $M \in \mc O^P$.
\begin{enumerate}
\item $M^{\vee}$ and $M^-$ are $L$-linear locally analytic $(\mf g,L_P)$-modules. 
\item If we extend the action of $L_P$ on $M^\vee$ to $P = N_PL_P$ via $\exp(Y)(m) = \sum {1\over n!} Y^n(m)$ for $Y \in N_P$ and $m \in M$ then $M^\vee \in \mc O^P$. 
\item If we extend the action of $L_P$ on $M^-$ to $P^- = N_{P^-}L_P$ via $\exp(Y)(m) = \sum {1\over n!} Y^n(m)$ for $Y \in N_{P^-}$ and $m \in M$ then $M^{-} \in \mc O^{P^-}$.
\end{enumerate}
\end{prop}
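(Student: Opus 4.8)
We treat $M^\vee$; the case of $M^-$ is the evident mirror (replace $\tau$ by $\iota$, $P$ by $P^-$ and $\mf n_P$ by $\mf n_{P^-}$), and I will say a word about it at the end. As a $\mrm U(\mf g_L)$-module we already know from Lemma \ref{lemm:dualities}(1) that $M^\vee$ lies in $\mc O^{\mf p_L}$; in particular it is a weight module, it is $\mrm U(\mf g_L)$-stable inside $M^{\ast,\tau}$, and $\mf n_{P,L}$ acts on it locally nilpotently. So the real content is about the group actions: that the $\cdot_\tau$-action of $L_P$ on $M^\vee$ is locally analytic and $\Ad$-compatible with $\mrm U(\mf g_L)$, that it extends through the exponential to a locally analytic action of $P = N_P L_P$, and that the result obeys the finiteness axioms of Definition \ref{defi:OP}. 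The plan is to exhibit $M^\vee$ as an increasing union of finite-dimensional $P$-stable subspaces, so that every such claim reduces to the finite-dimensional representation theory already recorded in Sections \ref{subsec:locally-anal-reminder} and \ref{subsec:duality} (Proposition \ref{prop:exponentiate-action} and the discussion of $M^{\ast,\tau}$ there) together with Lemma \ref{lemm:dualities}.

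To build the exhaustion, first note that $M$, being a quotient of a generalized Verma module $M_{\mf p_L}(U) \simeq \mrm U(\mf n_{P^-,L}) \otimes_L U$ with $U$ finite-dimensional (\cite[Lemma 2.8]{os}), is finitely generated over $\mrm U(\mf n_{P^-,L})$; hence $M_k := M/\mf n_{P^-,L}^k M$ is finite-dimensional for all $k\geq 0$. Since $d\tau$ is an anti-automorphism carrying $\mf n_{P,L}$ isomorphically onto $\mf n_{P^-,L}$, one checks the equality of subspaces of $M^\ast$
\[
V_k := (M^{\ast,\tau})[\mf n_{P,L}^k] = \{ f \in M^\ast : f|_{\mf n_{P^-,L}^k M} = 0\} = (M_k)^\ast,
\]
so that $M^\vee = \bigcup_k V_k$ is an increasing union of finite-dimensional subspaces. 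Because $\mf n_{P^-,L}^k M$ is $L_P$-stable and $M$ is a locally analytic representation of $L_P$, the quotient $M_k$ is a finite-dimensional locally analytic $L_P$-representation on which $\mf t_L$ acts diagonalizably; as $\tau$ restricts to a $\mbf Q_p$-algebraic map of $\mbf L_{\mbf P}$, it follows that $V_k = M_k^{\ast,\tau}$ is a finite-dimensional locally analytic $L_P$-representation whose derived $\mf l_{P,L}$-action is the restriction of the $\cdot_\tau$-action of $\mrm U(\mf g_L)$ to $\mrm U(\mf l_{P,L})$ -- this is exactly the finite-dimensional situation of Section \ref{subsec:duality}. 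Finally, the elementary inclusion $d\tau(Y)\cdot\mf n_{P^-,L}^k M \subseteq \mf n_{P^-,L}^{k+1}M \subseteq \mf n_{P^-,L}^k M$ (valid since $d\tau(Y) \in \mf n_{P^-,L}$ for $Y \in \mf n_P$) shows each $V_k$ is stable under the $\cdot_\tau$-action of $\mf n_P$; since $V_k$ is finite-dimensional and $\mf n_{P,L}$ acts nilpotently on it, $\exp(Y) = \sum \tfrac1{n!}Y^n$ is a well-defined algebraic action of $N_P$ on $V_k$, and -- exactly as in the construction of $M_{\mf p_L}(U)$, using that the $\Ad$-action of $L_P$ on $\mf n_P$ is algebraic, and as in Proposition \ref{prop:exponentiate-action} -- this assembles with the $L_P$-action into a finite-dimensional locally analytic representation of $P$ on $V_k$. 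The inclusions $V_k \hookrightarrow V_{k+1}$ are then $P$-equivariant, being inclusions of $\cdot_\tau$-$\mf n_P$-stable and $L_P$-stable subspaces of $M^{\ast,\tau}$.

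Granting this, part (1) follows: $M^\vee = \varinjlim V_k$ is a locally convex inductive limit of finite-dimensional locally analytic $L_P$-representations with injective, $L_P$-equivariant transition maps, hence itself a locally analytic representation of $L_P$ carrying its finest convex topology (\cite[Lemma 3.1.4(iii)]{em2}); the auxiliary $\mrm U(\mf g_L)$-action is the continuous $\cdot_\tau$-action, and the two compatibility conditions of Definition \ref{defi:loc-anal-defi} (with $L_P$ in place of $P$) hold because they hold on each $V_k$ and can be tested one vector at a time. For part (2), after extending to $P = N_P L_P$ through the exponential the same $\varinjlim$ argument along the $P$-equivariant maps $V_k \hookrightarrow V_{k+1}$ shows $M^\vee$ is a locally analytic representation of $P$; the derived action of $\mf n_{P,L}$ is $\cdot_\tau|_{\mf n_{P,L}}$ (differentiate $t \mapsto \exp(tY)$), giving Definition \ref{defi:loc-anal-defi}(1), and $\Ad$-compatibility along $N_P$ reduces on each $V_k$ to the identity $\exp(Y)\,X\,\exp(-Y) = \Ad(\exp Y)X$ (meaningful since $\mrm U(\mf g_L)\cdot_\tau M^\vee \subseteq M^\vee$ by Lemma \ref{lemm:dualities}(1)). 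Thus $M^\vee$ is a locally analytic $(\mf g,P)$-module, it is the topological union of the finite-dimensional $P$-stable subrepresentations $V_k$ (Definition \ref{defi:OP}(1)), and its underlying $\mrm U(\mf g_L)$-module lies in $\mc O^{\mf p_L}$ (Lemma \ref{lemm:dualities}(1)); so $M^\vee \in \mc O^P$. The assertion for $M^-$ is proved word for word with $\tau$ replaced by $\iota$, $P$ by $P^-$ and $\mf n_P$ by $\mf n_{P^-}$; here $d\iota = -\id$ on $\mf g_L$ still preserves $\mf n_{P^-,L}$, so the relevant exhaustion is again by $(M/\mf n_{P^-,L}^k M)^\ast$, now carrying the $\cdot_\iota$-action, and one extends to $P^- = N_{P^-}L_P$.

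The main obstacle is purely the bookkeeping of this exhaustion: establishing that $M/\mf n_{P^-,L}^k M$ is finite-dimensional, identifying its dual with $(M^{\ast,\tau})[\mf n_{P,L}^k]$, and checking that the $V_k$ are simultaneously finite-dimensional, locally analytic over $L_P$, and $P$-stable under the exponentiated action. Once that structure is available, every remaining assertion -- local analyticity of the assembled $P$-action, the two axioms of Definition \ref{defi:loc-anal-defi}, and the two axioms of $\mc O^P$ -- is a formal consequence of the finite-dimensional theory of Sections \ref{subsec:locally-anal-reminder}--\ref{subsec:duality} and Lemma \ref{lemm:dualities}, transported to the inductive limit.
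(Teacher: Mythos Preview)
Your proof is correct and follows essentially the same route as the paper: exhaust $M^\vee$ by the finite-dimensional subspaces $V_k=(M^{\ast,\tau})[\mf n_{P,L}^k]\simeq (M/\mf n_{P^-,L}^kM)^{\ast,\tau}$, observe that each $V_k$ is a finite-dimensional locally analytic $L_P$-representation stable under the exponentiated $\mf n_P$-action, and pass to the inductive limit. Your write-up is somewhat more detailed than the paper's in explicitly checking the $\Ad$-compatibility axiom of Definition~\ref{defi:loc-anal-defi} and in justifying finite-dimensionality of $M/\mf n_{P^-,L}^kM$ via \cite[Lemma~2.8]{os}, but the architecture is the same.
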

\begin{proof}
The statements about $M^-$ are proved in the same way as for $M^\vee$, with only minor modifications, so we will discuss only $M^\vee$.

Let $k \geq 0$ be an integer. We first remark that $M^\vee$ is the union of $(M^{\ast,\tau})[{\mf n_P^k}]$ for all $k\geq 0$. Since $M$ is equipped with its finest convex topology, the strong and weak topology on $M^{\ast}$ agree. Thus, the induced topology on $M^\vee$ is the inductive limit topology on the union $\bigcup (M^{\ast,\tau})[{\mf n_P^k}]$. We claim that each element of the union is naturally an $L$-linear locally analytic representation of $L_P$. This would complete the proof of (1).

Since $M \in \mc O^P$ the subspace $\mf n_{P^-,L}^k \cdot M \subset M$ is $L_P$-stable and has finite co-dimension. In particular, the dual $\tau$-action on $(M/\mf n_{P^-,L}^kM)^{\ast,\tau}$ naturally defines an $L$-linear finite-dimensional locally analytic representation of $L_P$ (compatible with the action of $\mrm U(\mf l_{P,L})$). On the other hand, 
\begin{equation*}
(M^{\ast,\tau})[{\mf n_{P,L}^k}] = \{ f : M \ra L \mid \mf n_{P,L}^k \cdot f = 0 \} = \{ f : M \ra L \mid f(\mf n_{P^-,L}^k M) = 0 \}
\end{equation*}
(note the careful switch of the parabolic, due to the action of $\mrm U(\mf g_L)$ on $f$ being via $d\tau$) and thus we have a natural isomorphism
\begin{equation*}
(M/\mf n_{P^-,L}^k M)^{\ast,\tau} \simeq (M^{\ast,\tau})[{\mf n_{P,L}^k}].
\end{equation*}
of $L_P$-representations. This proves the claim.

For (2), we need to check that $M^\vee$ satisfies Definition \ref{defi:OP} once equipped with the auxiliary action of $N_P$. Definition \ref{defi:OP}(2) holds by Lemma \ref{lemm:dualities}. To see Definition \ref{defi:OP}(1), we observe that the exponentiated action of $\mf n_P$ preserves the subspaces $(M^{\ast,\tau})[{\mf n_{P,L}^k}] \subset M^\vee$. Thus $M^\vee$ is the topological union of $L$-linear finite-dimensional locally analytic $P$-stable subspaces.
\end{proof}

\begin{rema}
Exponentiating the action of $\mf n_P$ to obtain an action of $N_P$ in Definition \ref{defi:duals} may seem like a choice, but it is our only option in view of Proposition \ref{prop:homs-ok}(1).
\end{rema}

\begin{prop}\label{prop:duality-exact}
The duality functors $M\mapsto M^-$ and $M \mapsto M^\vee$ are exact contravariant functors and $(M^{\vee})^- \simeq (M^-)^{\vee}$ in $\mc O^{P^-}$.
\end{prop}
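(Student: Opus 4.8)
The plan is to reduce everything to the corresponding statements for the underlying $\mrm U(\mf g_L)$-modules, which were already established in Section \ref{subsec:verma-reminder} (exactness and contravariance of both $M \mapsto M^{\ast,\tau}[\mf n_{\mf b}^\infty]$ and $M \mapsto M^{\ast,\iota}[\mf n_{\mf b^-}^\infty]$ by Lemma \ref{lemm:dualities}(3), and the commutation relation $(M^\vee)^- \simeq (M^-)^\vee$ by Lemma \ref{lemm:dualities}(4)), and then upgrade to statements in $\mc O^P$ using the rigidity results of Section \ref{subsec:verma-fd}. Concretely, I would first observe that on the level of underlying vector spaces (and $\mrm U(\mf g_L)$-modules) the functors $M \mapsto M^\vee$ and $M \mapsto M^-$ in Definition \ref{defi:duals} restrict to exactly the functors of Definition \ref{defi:duality}, since $\mf n_P \subset \mf n_{\mf b}$ and $\mf n_{P^-} \subset \mf n_{\mf b^-}$; thus Lemma \ref{lemm:dualities} applies verbatim to give exactness, contravariance, and the isomorphism $(M^\vee)^- \simeq (M^-)^\vee$ as $\mrm U(\mf g_L)$-modules.

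The second step is to check that exactness and contravariance hold in the category $\mc O^P$ itself, not merely after forgetting the group action. For contravariance: given a morphism $f : M \to M'$ in $\mc O^P$, its $L$-linear transpose $f^\ast : (M')^\ast \to M^\ast$ restricts to a $\mrm U(\mf g_L)$-equivariant map $(M')^\vee \to M^\vee$ by Lemma \ref{lemm:dualities}, and it is automatically $T$-equivariant since the $\tau$-action of $T$ is dual to the $T$-action and $f$ is $T$-equivariant; by Proposition \ref{prop:homs-ok}(2), being $(\mf g,T)$-equivariant on objects of $\mc O^P$ is the same as being $(\mf g,P)$-equivariant, so $f^\vee$ is a morphism in $\mc O^P$. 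Functoriality ($\id^\vee = \id$, $(g\circ f)^\vee = f^\vee \circ g^\vee$) and the involutivity $(M^\vee)^\vee \simeq M$ are then immediate from the underlying-vector-space statements. For exactness: a short exact sequence $0 \to M' \to M \to M'' \to 0$ in $\mc O^P$ is in particular exact as $\mrm U(\mf g_L)$-modules, the dual sequence of underlying modules is exact by Lemma \ref{lemm:dualities}(3), and exactness is a statement about underlying vector spaces, so it survives; one only needs that the maps in the dual sequence are morphisms in $\mc O^P$, which is the contravariance point just made.

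The third step is the commutation relation $(M^\vee)^- \simeq (M^-)^\vee$ as objects of $\mc O^{P^-}$. The underlying $\mrm U(\mf g_L)$-modules are canonically identified by Lemma \ref{lemm:dualities}(4) — both sit inside $M^{\ast\ast}$ as $\bigoplus_\lambda ((M^{\ast,\tau})^{\ast,\iota})_{-\lambda}$, with $\iota$ and $\tau$ swapped but compatible. It remains to see that this identification intertwines the $L_P$-actions and hence, by Proposition \ref{prop:homs-ok}(3) applied to $P^-$, the full $(\mf g, P^-)$-actions. On the torus $T$ this is the weight-space bookkeeping already done in Section \ref{subsec:verma-reminder}; on all of $L_P$ it follows because the $\iota$- and $\tau$-dual actions of $L_P$ are built from the algebraic maps $\iota$ and $\tau$ of $\Res_{K/\mbf Q_p}\mbf L_P$, and the relation between $\Ad$, $d\iota$, $d\tau$ recorded in footnote~\ref{footnote:commutation-relation} shows the two composite actions agree after the identification. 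Then Proposition \ref{prop:homs-ok}(3) promotes the $L_P$-equivariant isomorphism to an isomorphism in $\mc O^{P^-}$.

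The only genuinely delicate point is the third step: matching the two $L_P$-actions on the double dual under the canonical linear identification, i.e. verifying that conjugating the $\iota$-dual construction by the $\tau$-dual construction (or vice versa) produces the same action of $L_P$ rather than twists differing by $g \mapsto {}^\tau g^{-1}$. This is exactly where footnote~\ref{footnote:commutation-relation}'s identities $\Ad(g)d\iota(X) = d\iota(\Ad(g)X)$ and $d\tau(\Ad(g)X) = \Ad({}^\tau g^{-1})d\tau(X)$ must be used carefully, together with the fact that ${}^\tau(\cdot)$ and inversion commute as maps on $G$; everything else is formal transpose-of-a-map and weight-space accounting that I would not spell out in detail.
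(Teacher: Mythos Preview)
Your proposal is correct and follows essentially the same route as the paper: reduce exactness to the underlying vector-space statement via Lemma \ref{lemm:dualities}(3), and reduce the commutation relation to a $(\mf g,L_P)$-equivariance check (which the paper dismisses as ``elementary given the definitions'') before invoking Proposition \ref{prop:homs-ok} to upgrade to $(\mf g,P^-)$. One small slip: in your third step you cite Proposition \ref{prop:homs-ok}(3), which concerns stability of \emph{subspaces}; since you are promoting equivariance of a \emph{morphism}, the correct reference is Proposition \ref{prop:homs-ok}(2), which is exactly what the paper uses.
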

\begin{proof}
The associations are obviously functors. The exactness is a statement about the underlying vector spaces and so follows from Lemma \ref{lemm:dualities}(3). The commutation relation reduces, by Proposition \ref{prop:homs-ok}(2), to checking that $(M^\vee)^{-} \simeq (M^-)^{\vee}$ as $(\mf g,L_P)$-modules. But that is elementary given the definitions (compare with the proof of Lemma \ref{lemm:dualities}(4)).
\end{proof}

\subsection{An explicit realization of opposite duals for generalized Verma modules}
Assume throughout this section that $U$ is an $L$-linear finite-dimensional locally analytic representation of $L_P$. We may view $U$ by inflation either as a representation of $P$ or $P^-$. We are going to give a concrete interpretation of the opposite dual $M_{\mf p^-_L}(U^{\ast,\iota})^-$ of the generalized Verma module associated to the contragradient representation $U^{\ast,\iota}$ in terms of $U$-valued polynomial functions on $N_P$. 
\begin{rema}\label{rema:remove-diagonal-hyp}
Strictly speaking we only discussed $M_{\mf p}(U)$ (and its analogs) when $\mf t_L$ acts diagonalizably on $U$. However, the definition makes sense for general $U$ and since the following construction does not use the assumption on $\mf t_L$ we will omit that assumption.
\end{rema}

View $\mrm U(\mf g_L)$ and $U$ as left $\mrm U(\mf p^-_L)$-modules and let $\Hom_{\mrm U(\mf p^-_L)}(\mrm U(\mf g _L), U)^{\iota}$ be the space of left $\mrm U(\mf p^-_L)$-module morphisms $\mrm U(\mf g_L) \ra U$. The $\iota$ signifies that we view this as an $L$-vector space equipped with its usual action of $P^-$ via
\begin{equation*}
(g \cdot f)(X) := g\cdot f(\Ad(g^{-1}) X) \;\;\;\;\;\;\;\; (g \in P^-,\; X \in \mrm U(\mf g_L),\; f \in \Hom_{\mrm U(\mf p^-_L)}(\mrm U(\mf g_L), U)).
\end{equation*}
It is also a left $\mrm U(\mf g_L)$-module via right multiplication of functions:\ $(Y\cdot f)(X) = f(XY)$.

On the other hand, we can consider $U^{\ast,\iota}$ as an $L$-linear finite-dimensional locally analytic representation of $L_P$, inflate the action to $P^-$, and then form the generalized Verma module $\mrm U(\mf g_L) \otimes_{\mrm U(\mf p^-_L)} U^{\ast,\iota}$ as in Example \ref{exam:generalized-verma-modules}. Then, we get the $\iota$-action on the dual space 
\begin{equation*}
M_{\mf p^-_L}(U^{\ast,\iota})^{\ast,\iota} = \Hom_{L}(\mrm U(\mf g_L) \otimes_{\mrm U(\mf p^-_L)} U^{\ast,\iota}, L)^{\iota}
\end{equation*}
as in Section \ref{subsec:duality}.
\begin{lemm}\label{lemma:isomorphism-opposite-dual}
Let $U$ be an $L$-linear finite-dimensional locally analytic representation of $L_P$. There is a canonical isomorphism of $L$-vector spaces
\begin{align*}
\Hom_{\mrm U(\mf p^-_L)}(\mrm U(\mf g_L),U)^{\iota} &\simeq \Hom_{L}(\mrm U(\mf g_L)\otimes_{\mrm U(\mf p^-_L)} U^{\ast,\iota} , L)^{\iota}\\
f &\mapsto \Psi_f
\end{align*}
given by $\Psi_f(X\otimes \lambda) = \lambda(f(d\iota(X)))$ for all $X \in \mrm U(\mf g_L)$ and $\lambda \in U^{\ast}$. It is equivariant for the actions of $P^-$ and $\mrm U(\mf g_L)$ on either side.
\end{lemm}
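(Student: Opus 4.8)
The plan is to write down an explicit inverse to $f \mapsto \Psi_f$ and to verify the $P^-$- and $\mrm U(\mf g_L)$-equivariances by direct computation; there is no analytic or structural content here, only bookkeeping with the anti-automorphism $d\iota$ and the finite-dimensionality of $U$. First I would check that the formula $\Psi_f(X\otimes\lambda) = \lambda(f(d\iota(X)))$ descends from $\mrm U(\mf g_L)\times U^{\ast}$ to the balanced tensor product $\mrm U(\mf g_L)\otimes_{\mrm U(\mf p^-_L)} U^{\ast,\iota}$. Since $d\iota$ acts by $-1$ on $\mf g_L$ it is an $L$-linear anti-automorphism of $\mrm U(\mf g_L)$ stabilizing $\mrm U(\mf p^-_L)$; thus for $Z\in \mrm U(\mf p^-_L)$ one has $d\iota(XZ) = d\iota(Z)d\iota(X)$ with $d\iota(Z)\in\mrm U(\mf p^-_L)$, so $f(d\iota(XZ)) = d\iota(Z)\cdot f(d\iota(X))$ because $f$ is a $\mrm U(\mf p^-_L)$-module map, and applying $\lambda$ and invoking the definition $(Z\cdot_\iota\lambda)(u) = \lambda(d\iota(Z)\cdot u)$ of the $\iota$-action on $U^{\ast,\iota}$ yields $\Psi_f(XZ\otimes\lambda) = \Psi_f(X\otimes Z\cdot_\iota\lambda)$. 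Hence $\Psi_f$ is a well-defined element of $\Hom_L(\mrm U(\mf g_L)\otimes_{\mrm U(\mf p^-_L)} U^{\ast,\iota}, L)$.

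Next I would construct the inverse. Because $U$ is finite-dimensional, every $L$-linear functional on $U^{\ast}$ is evaluation at a unique vector of $U$; so for $\Psi \in \Hom_L(\mrm U(\mf g_L)\otimes_{\mrm U(\mf p^-_L)} U^{\ast,\iota}, L)$ I define $f_\Psi\colon \mrm U(\mf g_L)\to U$ by taking $f_\Psi(Y)$ to be the unique vector with $\lambda(f_\Psi(Y)) = \Psi(d\iota(Y)\otimes\lambda)$ for all $\lambda\in U^{\ast}$. Running the previous paragraph's computation in reverse, and using that $d\iota$ is an involution, shows $f_\Psi(ZY) = Z\cdot f_\Psi(Y)$ for $Z\in\mrm U(\mf p^-_L)$, so $f_\Psi$ is a morphism of left $\mrm U(\mf p^-_L)$-modules. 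The identities $\lambda(f_\Psi(d\iota(X))) = \Psi(X\otimes\lambda)$ and $\mu(f_{\Psi_f}(Y)) = \Psi_f(d\iota(Y)\otimes\mu) = \mu(f(Y))$ then show that $f\mapsto\Psi_f$ and $\Psi\mapsto f_\Psi$ are mutually inverse $L$-linear bijections.

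Finally I would check the two equivariances. For $\mrm U(\mf g_L)$, the source carries right multiplication $(Y\cdot f)(X) = f(XY)$ while $d\iota(Y)$ acts on $\mrm U(\mf g_L)\otimes_{\mrm U(\mf p^-_L)} U^{\ast,\iota}$ by left multiplication, so, using the anti-automorphism property of $d\iota$,
\[
\Psi_{Y\cdot f}(X\otimes\lambda) = \lambda\big(f(d\iota(X)Y)\big) = \lambda\big(f(d\iota(d\iota(Y)X))\big) = \Psi_f\big(d\iota(Y)X\otimes\lambda\big) = (Y\cdot_\iota\Psi_f)(X\otimes\lambda).
\]
For $P^-$, with $(g\cdot f)(X) = g\cdot f(\Ad(g^{-1})X)$ on the source, $g(X\otimes\lambda) = \Ad(g)X\otimes g\cdot_\iota\lambda$ on the generalized Verma module, and $(g\cdot_\iota\Psi)(m) = \Psi(g^{-1}m)$ on the target, one computes
\begin{align*}
\Psi_{g\cdot f}(X\otimes\lambda) &= \lambda\big(g\cdot f(\Ad(g^{-1})d\iota(X))\big) = \lambda\big(g\cdot f(d\iota(\Ad(g^{-1})X))\big)\\
&= \Psi_f\big(g^{-1}(X\otimes\lambda)\big) = (g\cdot_\iota\Psi_f)(X\otimes\lambda),
\end{align*}
where the second equality is the commutation relation $\Ad(g)d\iota(X) = d\iota(\Ad(g)X)$ recorded in the footnote, and the remaining equalities unwind the relevant $\iota$-actions (in particular $(g^{-1}\cdot_\iota\lambda)(u) = \lambda(gu)$).

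The only genuine obstacle is notational care: one must track $d\iota$ consistently as it simultaneously transports the $\mrm U(\mf p^-_L)$-module structure across the tensor product, sits inside the argument of $f$, and interacts with $\Ad$, and one must remember that it is finite-dimensionality of $U$ that makes $f_\Psi$ definable at all.
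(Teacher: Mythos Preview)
Your proof is correct and follows exactly the same approach as the paper's: construct an explicit inverse using the finite-dimensionality of $U$ (the paper phrases this via the double-dual identification $U\simeq (U^{\ast,\iota})^{\ast,\iota}$, which is the same as your ``unique vector with $\lambda(f_\Psi(Y))=\Psi(d\iota(Y)\otimes\lambda)$''), and verify well-definedness and equivariance by direct computation. The paper in fact leaves the well-definedness and equivariance checks to the reader, so you have supplied more detail than the original; all your bookkeeping with $d\iota$ and $\Ad$ is correct.
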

\begin{proof}
We must check three things:\ 1) given $f$, $\Psi_f$ is well-defined on the tensor product, 2) $f\mapsto \Psi_f$ is equivariant for the actions of $P^-$ and $\mrm U(\mf g_L)$ and 3) that $f \mapsto \Psi_f$ is an isomorphism. The first two are formal manipulations given the definitions in the previous paragraph, so we leave them to the reader.

For the third point, we identify $U$ with its double dual $(U^{\ast,\iota})^{\ast,\iota}$ via $u \mapsto e_u$ where $e_u(\lambda) = \lambda(u)$ for all $\lambda \in U^{\ast}$ (this is $L_P$-equivariant). With this in mind it is easily checked that the map
\begin{align*}
\Hom_{L}(\mrm U(\mf g_L)\otimes_{\mrm U(\mf p^-_L)} U^{\ast,\iota} , L)^{\iota} &\ra \Hom_{\mrm U(\mf p^-_L)}(\mrm U(\mf g_L),U)^{\iota}\\
f &\mapsto \Phi_f,
\end{align*}
given by $\Phi_f(X)(\lambda) = f(d\iota(X) \otimes \lambda)$ for all $X \in \mrm U(\mf g_L)$ and $\lambda \in U^{\ast}$ is an inverse to $\Psi_f$.
\end{proof}

Recall from \cite[Section 2.5]{em3} that $C^{\pol}(N_P,L)$ is the space of $L$-valued polynomial functions on $N_P$. It is equipped with a natural action of $P$ where $N_P$ acts via the right regular action and $L_P$ acts via $(gf)(n) = f(g^{-1}ng)$. This action is locally analytic when we equip $C^{\pol}(N_P,L)$ with its finest convex topology. If $U$ is as above then we write $C^{\pol}(N_P,U) = C^{\pol}(N_P,L)\otimes_L U$. When we equip this tensor product with its inductive tensor product topology, the diagonal action of $L_P$ becomes locally analytic. Since $U$ is finite-dimensional, the natural topology on $C^{\pol}(N_P,U)$ is also the finest convex topology. According to \cite[Lemma 2.5.8]{em3} there is a canonical way of turning $C^{\pol}(N_P,U)$ into a locally analytic $(\mf g,P)$-module. In terms of this extension, we have:
\begin{prop}\label{prop:polynomia-opposite-dual}
If $U$ is an $L$-linear finite-dimensional locally analytic representation of $L_P$ then there is a canonical $(\mf g,P)$-equivariant isomorphism $C^{\pol}(N_P, U) \simeq M_{\mf p^-_L}(U^{\ast,\iota})^-$
\end{prop}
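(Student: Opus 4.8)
The plan is to combine Lemma \ref{lemma:isomorphism-opposite-dual} with the explicit description of $C^{\pol}(N_P, U)$ as a $(\mf g, P)$-module coming from \cite[Lemma 2.5.8]{em3}. First I would use Lemma \ref{lemma:isomorphism-opposite-dual} to replace the target $M_{\mf p^-_L}(U^{\ast,\iota})^-$, which by definition is $\left(M_{\mf p^-_L}(U^{\ast,\iota})^{\ast,\iota}\right)[\mf n_{P,L}^\infty]$, with the corresponding subspace of $\Hom_{\mrm U(\mf p^-_L)}(\mrm U(\mf g_L), U)^\iota$. Under the isomorphism $\Psi$, the condition of being killed by a large power of $\mf n_{P,L}$ (acting via $d\tau$ on the dual, which by the careful bookkeeping in Section \ref{subsec:duality} corresponds to right multiplication by $\mf n_{P^-,L}$ on functions $\mrm U(\mf g_L)\to U$ after passing through $d\iota$) translates into a finiteness condition on the $\mrm U(\mf p^-_L)$-module morphisms $f$. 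Concretely, using the PBW decomposition $\mrm U(\mf g_L) \simeq \mrm U(\mf p^-_L) \otimes_L \mrm U(\mf n_{P,L})$, such an $f$ is determined by its restriction to $\mrm U(\mf n_{P,L})$, and the subspace cut out by $[\mf n_{P,L}^\infty]$ consists of those $f$ supported (in the sense of vanishing beyond some degree) on a finite-dimensional piece of $\mrm U(\mf n_{P,L})$ — that is, genuine $U$-valued polynomial functions on $\mrm U(\mf n_{P,L})$, equivalently on $N_P$ via $\exp: \mf n_{P} \simeq N_P$ and the identification of $\mrm U(\mf n_{P,L})$ with differential operators / the divided-power completion. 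So the underlying vector space on the right is identified with $C^{\pol}(N_P, U) = C^{\pol}(N_P,L)\otimes_L U$.

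Next I would check that this vector space identification is equivariant for all the relevant actions. The $\mrm U(\mf g_L)$-action on $M_{\mf p^-_L}(U^{\ast,\iota})^-$ is the $d\iota$-action on the dual; via $\Psi$ this is right multiplication $(Y\cdot f)(X) = f(XY)$ on $\Hom_{\mrm U(\mf p^-_L)}(\mrm U(\mf g_L),U)$, and one checks directly — this is essentially the content of \cite[Lemma 2.5.8]{em3}, which defines the $(\mf g,P)$-structure on $C^{\pol}(N_P,U)$ — that restricting this along $\mrm U(\mf n_{P,L})\subset \mrm U(\mf g_L)$ recovers exactly the prescribed $\mf g_L$-action on polynomial functions (the $\mf n_{P,L}$-part acting by right-invariant derivations on $N_P$, the $\mf l_{P,L}$- and $\mf n_{P^-,L}$-parts acting through the twisting by the $\mrm U(\mf p^-_L)$-module structure on $U$). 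For the group action: the $P^-$-action under $\Psi$ is $(g\cdot f)(X) = g\cdot f(\Ad(g^{-1})X)$; its restriction to $L_P \subset P^-$ is the diagonal action on $C^{\pol}(N_P,L)\otimes_L U$ with $L_P$ acting on $C^{\pol}(N_P,L)$ by $(gf)(n)=f(g^{-1}ng)$, matching the action recalled before the statement. Finally, the $N_P$-action on $M_{\mf p^-_L}(U^{\ast,\iota})^-$, which by Definition \ref{defi:duals}/the preceding proposition is the \emph{exponentiated} action $\exp(Y)(m) = \sum \frac{1}{n!}Y^n(m)$ of $\mf n_P$, matches the right regular action of $N_P$ on $C^{\pol}(N_P,U)$ because under the identification $N_P\simeq \mf n_P$ via $\exp$ the right regular action is literally the exponential of the right-invariant derivations; and by Proposition \ref{prop:homs-ok}(1) both sides are forced to carry this same exponentiated action, so there is no ambiguity. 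By Proposition \ref{prop:homs-ok}(2) it then suffices to verify $(\mf g, L_P)$-equivariance, which is what the computations above establish.

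The main obstacle will be the translation, in the first step, between the two "sides" of the duality: keeping straight the interplay of $d\iota$ versus $d\tau$, the switch between $\mf n_{P,L}$ and $\mf n_{P^-,L}$ (already flagged with a parenthetical warning in Section \ref{subsec:duality}), and the identification of the $[\mf n_{P,L}^\infty]$-subspace of $\Hom_{\mrm U(\mf p^-_L)}(\mrm U(\mf g_L),U)^\iota$ with honest polynomial functions rather than formal ones. Once the dictionary $\mrm U(\mf n_{P,L})^\ast{}_{\mathrm{fin}} \leftrightarrow C^{\pol}(N_P,L)$ is set up cleanly — via PBW and the coordinates on $N_P$ coming from the root vectors $X_\alpha$, $\alpha\in\Phi^+\setminus\Phi^+_{L_P}$ — the remaining equivariance checks are the "formal manipulations" already deferred to the reader in Lemma \ref{lemma:isomorphism-opposite-dual} together with a direct comparison with \cite[Lemma 2.5.8]{em3}, and I would present them compactly rather than in full detail.
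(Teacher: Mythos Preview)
Your approach is essentially the paper's own: identify $M_{\mf p^-_L}(U^{\ast,\iota})^-$ with $\left[\Hom_{\mrm U(\mf p^-_L)}(\mrm U(\mf g_L),U)^\iota\right][\mf n_{P,L}^\infty]$ via Lemma \ref{lemma:isomorphism-opposite-dual}, and then identify the latter with $C^{\pol}(N_P,U)$. The paper simply cites \cite[(2.5.7)]{em3} for the second identification (which is, after all, the \emph{definition} of the $(\mf g,P)$-structure on $C^{\pol}(N_P,U)$), whereas you re-derive it via PBW; both are fine, and the paper's route is shorter precisely because Emerton already did the work you sketch.

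One small slip: in your parenthetical you mention ``acting via $d\tau$ on the dual''. The opposite dual uses only $d\iota$; $d\tau$ plays no role here. Under $\Psi$ the $\mrm U(\mf g_L)$-action becomes right multiplication $(Y\cdot f)(X)=f(XY)$, so the $[\mf n_{P,L}^\infty]$-condition is that $f$ vanishes on high-degree monomials in $\mrm U(\mf n_{P,L})$ (not $\mf n_{P^-,L}$), which is exactly the polynomial condition you want. Clean that up and your argument goes through.
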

\begin{proof}
We begin by remarking that any $L$-linear $(\mf g,P)$-equivariant identification will be topological since both sides are equipped with their finest convex topologies. This being said, the isomorphism amounts to unwinding the extension of the $P$-representation on $C^{\pol}(N_P,U)$ to a $(\mf g,P)$-module as in \cite[Section 2.5]{em3}. Namely, \cite[(2.5.7)]{em3} naturally realizes $C^{\pol}(N_P,U)$, $P$-equivariantly, as the subspace
\begin{equation*}
C^{\pol}(N_P,U) \simeq \left[\Hom_{\mrm U(\mf p^-_L)}(\mrm U(\mf g_L), U)^{\iota}\right][{\mf n_{P,L}^\infty}] \subset \Hom_{\mrm U(\mf p^-_L)}(\mrm U(\mf g_L), U)^{\iota},
\end{equation*}
and this gives the induced action of $\mrm U(\mf g_L)$ on $C^{\pol}(N_P,U)$ (one must check that Emerton's actions agree with the $\iota$-action we are using, but that is formal). Then, Lemma \ref{lemma:isomorphism-opposite-dual} gives the identification
\begin{equation*}
C^{\pol}(N_P,U) \simeq \left[\Hom_{L}(\mrm U(\mf g_L)\otimes_{\mrm U(\mf p^-_L)} U^{\ast,\iota} , L)^{\iota}\right][\mf n_{P,L}^\infty] = M_{\mf p^-_L}(U^{\ast,\iota})^{-}.
\end{equation*}
This completes the proof.
\end{proof}

\subsection{Digression on infinitesimally simple representations}\label{subsec:digression}
This section uses the notations and conventions of the previous sections. For the reader, we note that we will eventually apply the results of this subsection to a Levi factor (which is still a split connected reductive group).
\begin{defi}
An $L$-linear finite-dimensional locally analytic representation $U$ of $G$ is called $\mf g$-simple if it is irreducible as a module over  $\mrm U(\mf g_L)$.
\end{defi}

If $U$ is an $L$-linear finite-dimensional locally analytic representation of $G$ then {\em a fortiori} it is an $L$-linear finite-dimensional locally analytic representation of either the Borel subgroup $B$ or its opposite $B^-$. In particular, the action of $N$ or $N^-$ on $U$ is exponentiated from the action of the universal enveloping algebras $\mrm U(\mf n)$ or $\mrm U(\mf n^-)$ (see Section \ref{subsec:locally-anal-reminder}).

\begin{lemm}\label{lemm:highest-wt-vectors}
Suppose that $U$ is an $L$-linear finite-dimensional $\mf g$-simple locally analytic representation of $G$. Then the following is true.
\begin{enumerate}
\item $U$ is irreducible as a $G$-representation.
\item If $U \in \mc O^G$ then $\dim_L U^N = 1 = \dim_L U^{N^-}$ and $U$ is generated as a $L[G]$-module by any non-zero vector in either $U^N$ or $U^{N^-}$.
\end{enumerate}
\end{lemm}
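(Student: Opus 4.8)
The plan is to reduce both statements to $\mf g$-simplicity together with the highest weight theory recalled in Section~\ref{subsec:verma-reminder}, the only nontrivial analytic input being the identification of the $N$-action with the infinitesimal $\mf n$-action supplied by Proposition~\ref{prop:exponentiate-action}.

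\emph{Part (1).} First I would observe that any $G$-subrepresentation $U_0\subseteq U$ is $\mf g$-stable: $U_0$ is closed because $U$ is finite-dimensional, so differentiating the curves $t\mapsto \exp(tX)(u)$, which lie in $U_0$ for $u\in U_0$ and $X\in\mf g$, shows $X(u)\in U_0$; by $L$-linearity $U_0$ is then stable under $\mf g_L$, i.e.\ is a $\mrm U(\mf g_L)$-submodule of $U$. If $U_0\neq 0$ then $\mf g$-simplicity forces $U_0=U$, which is (1).

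\emph{Part (2).} I would proceed in two steps for the dimension count. First, identify $U^N$ with the subspace $U^{\mf n_L}$ of vectors annihilated by $\mf n_L$: since $U$ is a finite-dimensional locally analytic representation of $B$, Proposition~\ref{prop:exponentiate-action} gives $\exp(Y)(v)=\sum\frac{1}{n!}Y^n(v)$ for all $Y\in\mf n$ and $v\in U$, and because $\exp\colon\mf n\to N$ is bijective and the sum is finite ($\mf n$ acts nilpotently), replacing $Y$ by $tY$ and comparing coefficients in $t$ shows $v\in U^N$ iff $Y(v)=0$ for all $Y\in\mf n$; thus $U^N=U^{\mf n_L}$, and likewise $U^{N^-}=U^{\mf n^-_L}$. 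Second, since $U\in\mc O^G$ its underlying $\mrm U(\mf g_L)$-module lies in $\mc O^{\mf g_L}$ and is simple, so Proposition~\ref{prop:verma-facts}(1)--(2) identify $U\simeq L(\lambda)$ with $\lambda\in\Lambda_{\mf g_L}^+$, realized as a quotient of the Verma module $M_{\mf b_L}(\lambda)$. Now $U^{\mf n_L}$ is $\mf t_L$-stable (as $[\mf t_L,\mf n_L]\subseteq\mf n_L$), hence a sum of weight spaces; if $0\neq v\in U^{\mf n_L}$ has weight $\mu$, then $\mrm U(\mf g_L)v=\mrm U(\mf n^-_L)v$ (using $\mf n_L v=0$ and that $v$ is a $\mf t_L$-eigenvector), which equals $U$ by simplicity, while all weights of $\mrm U(\mf n^-_L)v$ are $\mu$ minus a sum of positive roots; comparing with the fact that $\lambda$ occurs in $L(\lambda)$ and all its weights are $\le\lambda$ forces $\mu=\lambda$. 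Since $U_\lambda$ is killed by $\mf n_L$ (no weight $\lambda+\alpha$ with $\alpha\in\Phi^+_{/L}$ occurs) and $\dim_L U_\lambda=1$ because $M_{\mf b_L}(\lambda)_\lambda$ is one-dimensional, we conclude $\dim_L U^N=1$. Running the same argument with $\mf b^-$ in place of $\mf b$ (so $U$ is the simple quotient of the $\mf b^-$-Verma module attached to its lowest weight) gives $\dim_L U^{N^-}=1$. Finally, the generation statement follows from (1): for any $0\neq u\in U^N$ or $u\in U^{N^-}$ (both nonzero by the above), the $L[G]$-submodule generated by $u$ is nonzero and $G$-stable, hence all of $U$.

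\emph{Expected difficulty.} No step is deep; the proof is essentially bookkeeping, provided one is careful about two points. The first is that $L$ need not be algebraically closed, so one must work with the version of category $\mc O$ from Section~\ref{subsec:verma-reminder} in order to know that its simple objects are still the usual (absolutely irreducible) highest weight modules --- which is exactly what Proposition~\ref{prop:verma-facts} records. The second, and the only place analysis intervenes, is the passage between the $N$-action and the $\mf n$-action, which is handled cleanly by Proposition~\ref{prop:exponentiate-action}; after that everything is Lie-algebra combinatorics. One could replace the re-run of the argument for $\mf b^-$ by invoking Weyl-group invariance of formal characters to get $\dim_L U_{w_0\lambda}=1$, but the symmetric argument seems shortest.
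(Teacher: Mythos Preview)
Your proof is correct and follows essentially the same approach as the paper: both reduce (1) to the observation that closed $G$-stable subspaces are $\mrm U(\mf g_L)$-stable, and both prove (2) by using Proposition~\ref{prop:exponentiate-action} to identify $U^N$ with $U[\mf n_L]$ and then invoking highest weight theory to see this space is one-dimensional. The only difference is cosmetic---you spell out the weight-space combinatorics showing $\dim_L U^{\mf n_L}=1$, whereas the paper simply cites \cite[Theorem 1.2(c)]{hum} for this fact.
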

\begin{proof}
Since $U$ is finite-dimensional, any subspace $U' \subset U$ is closed. Thus if $U'$ were also $G$-stable, then it would become a subspace stable under the action of $\mrm U(\mf g_L)$ as well. Since $U$ is assumed to be irreducible as a module over $\mrm U(\mf g_L)$, we see that $U' = U$ or $U=0$ and this proves (1). 

For the second point, by symmetry it is enough to prove $\dim_L U^N = 1$ (the second claim is implied by the first claim and part (1) of the lemma). Since $\mf t_L$ acts on $U$ as a sum of characters, $U$ has maximal vectors for the weight action. Thus $U$ is a highest weight module since it is $\mf g$-simple. In particular, \cite[Theorem 1.2(c)]{hum} implies that $\dim_L U[\mf n_L] = 1$. Since $G$ acts on $U$ $L$-linearly, $U[\mf n]$ is also one-dimensional. The action of $N$ on $U$ is exponentiated from the action of $\mrm U(\mf n)$ (Proposition \ref{prop:exponentiate-action}) and so we deduce that $\dim_L U^N \geq 1$. On the other hand, the action of $\mrm U(\mf n_L)$ is the $L$-linearization of the differential action of $N$ as well, and thus $\dim_L U^N \leq \dim _L U[\mf n] = 1$.
\end{proof}

If $U$ is a $\mf g$-simple object in $\mc O^G$ then we define a highest weight vector with respect to $B$ to be any non-zero vector $u^+ \in U^N$. It is unique up to scalar in $L^\times$ by Lemma \ref{lemm:highest-wt-vectors}(2). Since $B$ normalizes $N$, $B$ acts on $U^N$ through a locally analytic character $\chi: T \ra L^\times$. We call $\chi$ the highest weight of $U$. We write $\Phi_{/L}^{\vee}$ for the co-characters of $\mbf T_{/L}$ and $\Lambda^+_{\mf g_L}$ is defined in Section \ref{subsec:verma-reminder}.

\begin{lemm}
Suppose that $U$ is a $\mf g$-simple object in $\mc O^G$ with highest weight $\chi$. Then $d\chi \in \Lambda_{\mf g_L}^+$, i.e. $\langle d\chi,\alpha^\vee \rangle \geq 0$ for all $\alpha^\vee \in \Phi^{\vee}_{/L}$.
\end{lemm}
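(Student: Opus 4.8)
The plan is to identify $U$, viewed as a $\mrm U(\mf g_L)$-module, with one of the irreducible modules $L(\lambda)$ of Section~\ref{subsec:verma-reminder}, and then to match the two notions of ``highest weight'' — the $\mf t_L$-weight of the line $U[\mf n_L]$, and the locally analytic character $\chi$ through which $T$ acts on $U^N$ — so that the asserted inequality can be read off from Proposition~\ref{prop:verma-facts}.

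First I would observe that, since $U$ lies in $\mc O^G$ and is $\mf g$-simple, its underlying $\mrm U(\mf g_L)$-module is an irreducible object of the BGG category $\mc O^{\mf g_L}$ (this is Definition~\ref{defi:OP}(2) with $P = G$). Applying Proposition~\ref{prop:verma-facts}(2) with $\mf p = \mf g_L$, so that $\Lambda^+_{\mf p} = \Lambda^+_{\mf g_L}$, there is a weight $\lambda \in \Lambda^+_{\mf g_L}$ and a $\mrm U(\mf g_L)$-isomorphism $U \simeq L(\lambda)$. It then suffices to prove $\lambda = d\chi$: once this is known, the membership $d\chi = \lambda \in \Lambda^+_{\mf g_L}$ gives $\langle d\chi + \rho_0, \alpha^\vee\rangle \in \mbf Z_{>0}$, hence $\langle d\chi, \alpha^\vee \rangle \in \mbf Z_{\geq 0}$, for every simple root $\alpha$, and since $d\chi$ is the dominant integral highest weight of the finite-dimensional $\mf g_L$-module $U$ the classical fact that a dominant weight pairs non-negatively with every positive coroot yields $\langle d\chi, \alpha^\vee \rangle \geq 0$ for all $\alpha^\vee \in \Phi^\vee_{/L}$.

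For the equality $\lambda = d\chi$, I would use that $L(\lambda)$ is a highest weight module, so $U[\mf n_L]$ is the one-dimensional $\lambda$-weight line spanned by a maximal vector (\cite[Theorem 1.2(c)]{hum}). On the other hand, the argument already carried out in the proof of Lemma~\ref{lemm:highest-wt-vectors}(2) shows $U^N = U[\mf n_L]$, because the action of $N$ on the finite-dimensional representation $U$ is the exponential of the action of $\mrm U(\mf n)$ (Proposition~\ref{prop:exponentiate-action}), forcing $U^N \subseteq U[\mf n_L]$, while $\dim_L U^N = 1 = \dim_L U[\mf n_L]$. Thus $U^N$ is precisely the $\lambda$-weight line, and by definition $T$ acts on it through the highest weight character $\chi$. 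Finally, because $U$ is a locally analytic $(\mf g,G)$-module, the derived action of $T$ on $U$ agrees with the restriction of the $\mrm U(\mf g_L)$-action to $\mrm U(\mf t_L)$ (Definition~\ref{defi:loc-anal-defi}(1)); evaluating this compatibility on the line $U^N = U[\mf n_L]$ gives $d\chi = \lambda$.

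The only point requiring attention is this last step — the interchange between the group-theoretic highest weight $\chi$ and the Lie-theoretic highest weight $\lambda$ — but it is essentially forced by the identification $U^N = U[\mf n_L]$, which was already isolated in the proof of Lemma~\ref{lemm:highest-wt-vectors}, together with the compatibility of the integrated and derived torus actions built into the definition of a locally analytic $(\mf g,G)$-module. No genuinely new difficulty arises.
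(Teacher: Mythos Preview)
Your proof is correct and follows essentially the same approach as the paper, which simply cites \cite[Section 1.6]{hum} for the fact that a finite-dimensional simple $\mrm U(\mf g_L)$-module has dominant integral highest weight. You have just unpacked what that citation entails: identifying $U \simeq L(\lambda)$ with $\lambda \in \Lambda^+_{\mf g_L}$ via Proposition~\ref{prop:verma-facts}(2), and then matching $\lambda$ with $d\chi$ through the equality $U^N = U[\mf n_L]$ and the compatibility of the group and Lie algebra actions of the torus.
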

\begin{proof}
This follows from the structure of simple $\mrm U(\mf g_L)$-modules (see \cite[Section 1.6]{hum}).
\end{proof}

\begin{prop}\label{prop:highest-weight-vectors-unique}
If $U$ and $U'$ are two $\mf g$-simple objects in $\mc O^G$ with the same highest weight character then $U\simeq U'$ as  $G$-representations.
\end{prop}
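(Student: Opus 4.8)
Proof proposal. The plan is to leverage the uniqueness results already established for highest weight modules over $\mrm U(\mf g_L)$, namely Proposition \ref{prop:verma-facts} together with Lemma \ref{lemm:highest-wt-vectors}, and then upgrade from a $\mrm U(\mf g_L)$-isomorphism to a $G$-equivariant one. First I would observe that by Lemma \ref{lemm:highest-wt-vectors}(2), both $U$ and $U'$ are generated as $\mrm U(\mf g_L)$-modules by their (one-dimensional) spaces of highest weight vectors $U^N$, $U'^N$, and that $\mf t_L$ acts on these through the same character, namely $d\chi$. Thus both $U$ and $U'$ are simple highest weight $\mrm U(\mf g_L)$-modules with the same highest weight $d\chi$; by highest weight theory (Proposition \ref{prop:verma-facts}(1), i.e.\ the uniqueness of the simple quotient $L(d\chi)$ of the Verma module $M_{\mf b}(d\chi)$) there is a $\mrm U(\mf g_L)$-linear isomorphism $\phi: U \overset{\simeq}{\to} U'$, unique up to an $L^\times$-scalar.

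The main work is to promote $\phi$ to a $G$-equivariant map. I would first normalize $\phi$ so that it carries a chosen highest weight vector $u^+ \in U^N$ to a chosen highest weight vector $u'^+ \in U'^N$; this is possible since $\phi(U^N) = U'^N$ (as $\phi$ preserves the weight spaces and $U^N = U[\mf n]$, $U'^N = U'[\mf n]$ are the $d\chi$-weight spaces by the argument in the proof of Lemma \ref{lemm:highest-wt-vectors}). Now consider $\mc S := \{ u \in U : \phi(g u) = g \phi(u) \text{ for all } g \in G \}$, or rather argue directly: the two representations $U$ and $U'$ are finite-dimensional locally analytic, hence restrict to locally analytic representations of $B$ and of $B^-$, and on these the actions of $N$, $N^-$ are exponentiated from the $\mrm U(\mf n_L)$-, $\mrm U(\mf n^-_L)$-actions respectively (Proposition \ref{prop:exponentiate-action}). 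Since $\phi$ intertwines the $\mrm U(\mf g_L)$-actions, it intertwines the $\mrm U(\mf n_L)$- and $\mrm U(\mf n^-_L)$-actions, and passing to exponentials (using that $\phi$ is continuous, being linear between finite-dimensional spaces, and using the convergence in Proposition \ref{prop:exponentiate-action}) shows $\phi$ is $N$- and $N^-$-equivariant. For $T$: the $T$-action on $u^+$ and $u'^+$ is through the same character $\chi$ by hypothesis, and $\phi$ sends $u^+$ to $u'^+$; since $T$ normalizes $N^-$ and $u^+$ generates $U$ as a $\mrm U(\mf n^-_L)$-module (as $\mrm U(\mf g_L) = \mrm U(\mf n^-_L)\mrm U(\mf b_L)$ so $U = \mrm U(\mf n^-_L) u^+$), one checks $\phi(t u) = t \phi(u)$ for $u$ of the form $Y_1 \cdots Y_k u^+$ with $Y_i \in \mf n^-_L$ using $\Ad(t) Y_i \in \mf n^-_L$ and the $\mrm U(\mf g_L)$-equivariance of $\phi$ together with the matching of the $T$-action on $u^+, u'^+$; this extends by linearity to all of $U$. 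Finally, $G$ is generated by $N$, $T$, $N^-$ (as $\mbf G$ is split connected reductive, e.g.\ by the Bruhat decomposition as cited in the proof of Proposition \ref{prop:homs-ok}), so $\phi$ is $G$-equivariant.

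Alternatively, and perhaps more cleanly, I would phrase the last step via Corollary \ref{coro:finite-dimensional-equivariance}(1): that corollary shows a $(\mf b, T)$-equivariant linear map between finite-dimensional locally analytic $B$-representations is automatically $B$-equivariant. Applying it once for $B$ and once for $B^-$ (noting $\phi$ is $\mf b$-, $\mf b^-$- and $T$-equivariant from the $\mrm U(\mf g_L)$-equivariance and the matching central/highest-weight data), we get that $\phi$ is both $B$- and $B^-$-equivariant, hence $G$-equivariant since $G = \langle B, B^- \rangle$. The only subtlety requiring care is the claim that $\phi$ can be normalized to be $T$-equivariant: this uses precisely the hypothesis that $U$ and $U'$ have the \emph{same} highest weight \emph{character} $\chi$ of $T$ (not merely the same derivative $d\chi$), so that matching the highest weight lines forces $\phi$ to commute with $T$ there, and then $\mf g_L$-equivariance propagates $T$-equivariance to all of $U$. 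I expect this normalization/propagation of the $T$-action to be the main point to get right; everything else is a formal consequence of finite-dimensional highest weight theory and the exponentiation results of Section \ref{subsec:locally-anal-reminder}.
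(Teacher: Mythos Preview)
Your proposal is correct and follows essentially the same approach as the paper: build a $\mrm U(\mf g_L)$-isomorphism $\phi$ matching highest weight vectors, verify $T$-equivariance using that both highest weight vectors carry the same $T$-character $\chi$ together with the relation $t\cdot Xu^+ = (\Ad(t)X)(\chi(t)u^+)$, and then upgrade $(\mf g,T)$-equivariance to $G$-equivariance. The only difference is cosmetic: the paper invokes Proposition~\ref{prop:homs-ok}(2) with $P=G$ for the last step (which packages precisely your Corollary~\ref{coro:finite-dimensional-equivariance}/$\langle B,B^-\rangle = G$ argument), whereas you unwind that proposition by hand.
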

\begin{proof}
Write $\chi: T \ra L^\times$ for the common highest weight character. Since $U$ and $U'$ have the same infinitesimal highest weight, they are isomorphic as $\mrm U(\mf g_L)$-modules. Explicitly, a $\mrm U(\mf g_L)$-equivariant isomorphism $f:U \ra U'$ is obtained by choosing highest weight vectors $u^+ \in U$ and $u'^+ \in U'$ and setting $f(u^+) = u'^+$.  However, since $T$ acts on either $u^+$ or $u'^+$ via $\chi$, we see that $f$ is also $T$-equivariant. Indeed, if $X \in \mrm U(\mf g_L)$ then $f(Xu^+) = Xu'^+$, and we have
\begin{equation*}
f(t\cdot Xu^+) = f(\Ad(t)X \cdot tu^+) = \Ad(t) X f(tu^+) = \Ad(t)X\cdot  \chi(t) u'^+ = t\cdot Xu'^+.
\end{equation*}
Thus $f$ is a $(\mf g,T)$-equivariant isomorphism $f: U \simeq U'$. Since $U$ and $U'$ are in $\mc O^G$, Proposition \ref{prop:homs-ok} implies that $f$ is $G$-equivariant.
\end{proof}

\begin{coro}\label{coro:char-unique}
The association $U \mapsto U^N$ from $L$-linear finite-dimensional $\mf g$-simple objects in $\mc O^G$ to locally analytic characters $\chi: T \ra L^\times$ is injective.
\end{coro}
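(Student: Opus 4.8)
The plan is to deduce this immediately from Proposition \ref{prop:highest-weight-vectors-unique}. First I would recall how the association is defined: if $U$ is an $L$-linear finite-dimensional $\mf g$-simple object in $\mc O^G$, then Lemma \ref{lemm:highest-wt-vectors}(2) gives $\dim_L U^N = 1$, and since $B$ normalizes $N$ the group $B$ acts on the line $U^N$ through a locally analytic character of $T$, which is exactly the highest weight $\chi$ of $U$ in the sense defined just after Lemma \ref{lemm:highest-wt-vectors}. So the association $U \mapsto U^N$ is, up to the identification of a $T$-stable line with the character by which $T$ acts on it, the same as $U \mapsto \chi$, the highest-weight map.

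Then I would argue injectivity (meaning injectivity on isomorphism classes of such $U$, which is the only sensible reading): suppose $U$ and $U'$ are two $\mf g$-simple objects in $\mc O^G$ with $U^N \simeq U'^N$ as characters of $T$, i.e.\ with the same highest weight character $\chi$. Proposition \ref{prop:highest-weight-vectors-unique} then yields $U \simeq U'$ as $G$-representations, which is the claim. There is no real obstacle here — the content is entirely in Lemma \ref{lemm:highest-wt-vectors} and Proposition \ref{prop:highest-weight-vectors-unique}, which have already been established; the corollary is just a matter of restating them, the only minor point being to note that ``injective'' refers to isomorphism classes.
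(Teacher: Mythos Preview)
Your proposal is correct and is exactly the argument the paper intends: the corollary is stated without proof immediately after Proposition \ref{prop:highest-weight-vectors-unique}, and your deduction from that proposition (together with Lemma \ref{lemm:highest-wt-vectors}(2) to interpret $U^N$ as the highest weight character) is the evident one-line justification.
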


\begin{coro}\label{coro:isomorphism-transpose-dual}
Suppose that $U$ is a $\mf g$-simple object in $\mc O^G$. Then $U\simeq U^{\ast,\tau}$. The isomorphism is unique up to a scalar.
\end{coro}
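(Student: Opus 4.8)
The plan is to show that $U^{\ast,\tau}$ is again a $\mf g$-simple object of $\mc O^G$ with the same highest weight character as $U$, and then invoke Proposition~\ref{prop:highest-weight-vectors-unique} (or equivalently Corollary~\ref{coro:char-unique}) to conclude $U \simeq U^{\ast,\tau}$ as $G$-representations, with the scalar ambiguity coming from the fact that any $G$-equivariant automorphism of the irreducible $U$ is a scalar.

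First I would record that $U^{\ast,\tau} = U^\vee$ for $M = U \in \mc O^G$, since $\mf n_{P,L} = 0$ when $\mbf P = \mbf G$, so $U^{\ast,\tau}[\mf n_{G,L}^\infty] = U^{\ast,\tau}$; hence by Lemma~\ref{lemm:dualities}(1) (or the proposition on duals in Section~\ref{subsec:duality}) $U^{\ast,\tau}$ lies in $\mc O^G$, i.e.\ it is a finite-dimensional locally analytic representation of $G$ on which $\mf t_L$ acts diagonalizably. Next I would check $U^{\ast,\tau}$ is $\mf g$-simple: the duality $M \mapsto M^\vee$ on $\mc O^G$ is an exact contravariant equivalence (Proposition~\ref{prop:duality-exact} / Lemma~\ref{lemm:dualities}(3)), so it sends the irreducible $\mrm U(\mf g_L)$-module $U$ to an irreducible one; concretely, any $\mrm U(\mf g_L)$-submodule of $U^{\ast,\tau}$ is the annihilator of a $\mrm U(\mf g_L)$-quotient of $U$, and $U$ irreducible forces this to be $0$ or everything. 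So $U^{\ast,\tau}$ is a $\mf g$-simple object of $\mc O^G$.

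Then I would compute the highest weight character of $U^{\ast,\tau}$. The key computation is at the level of Lie algebras: by the identification $(M^{\ast,\tau})_\lambda = j_\lambda((M_\lambda)^\ast)$ from Section~\ref{subsec:verma-reminder}, the $\mf t_L$-weights of $U^{\ast,\tau}$ are exactly the weights of $U$ (\emph{not} negated, since $d\tau$ fixes $\mf t$ pointwise), so $U^{\ast,\tau}$ has the same infinitesimal highest weight $d\chi$ as $U$, where $\chi$ is the highest weight of $U$. To upgrade from Lie algebras to the torus $T$: choose a highest weight vector $u^+ \in U^N$ and let $f^+ \in U^{\ast,\tau}$ be supported on the line $L u^+$, normalized by $f^+(u^+) = 1$; then $f^+$ spans $(U^{\ast,\tau})[\mf n_L]$, hence spans $(U^{\ast,\tau})^N$ by Lemma~\ref{lemm:highest-wt-vectors}(2), and for $t \in T$ one has $(t \cdot_\tau f^+)(u^+) = f^+({}^\tau\! t\, u^+) = f^+(t\, u^+) = \chi(t)$ using that ${}^\tau$ fixes $T$ pointwise (the transpose map restricts to the identity on $\mbf T$, \cite[Section II.1.15]{jantzen}), so $t \cdot_\tau f^+ = \chi(t) f^+$. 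Thus $U^{\ast,\tau}$ has highest weight character $\chi$, the same as $U$.

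Finally, Proposition~\ref{prop:highest-weight-vectors-unique} gives $U \simeq U^{\ast,\tau}$ as $G$-representations. For the uniqueness up to scalar: two such isomorphisms differ by a $G$-equivariant automorphism of $U$, which by Schur's lemma (Lemma~\ref{lemm:highest-wt-vectors}(1) says $U$ is irreducible, and $\Hom_G(U,U) = \Hom_{(\mf g,T)}(U,U)$ by Proposition~\ref{prop:homs-ok}(2), a division algebra finite over the algebraically closed-in-$U$ field $L$ acting as scalars on the absolutely irreducible $U$) must be a scalar in $L^\times$. The only point requiring a little care is the passage from the $\mf t_L$-weight action to the $T$-action under ${}^\tau$ --- i.e.\ that the transpose map is trivial on $\mbf T$ so that no twist by a character of $T/T^\circ$ sneaks in --- but this is exactly the defining property of $\tau$, so there is no real obstacle; the argument is essentially a bookkeeping exercise combining the Verma-module duality facts of Section~\ref{subsec:verma-reminder} with the rigidity statement of Proposition~\ref{prop:highest-weight-vectors-unique}.
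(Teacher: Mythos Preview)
Your proof is correct and follows essentially the same approach as the paper: show that $U^{\ast,\tau}$ is again a $\mf g$-simple object of $\mc O^G$ with the same highest weight character $\chi$, then invoke Proposition~\ref{prop:highest-weight-vectors-unique}. The only minor difference is that the paper verifies $\lambda^+ \in (U^{\ast,\tau})^N$ by a direct computation (using that ${}^\tau n \in N^-$ lowers weights), whereas you identify $f^+$ as spanning the top $\mf t_L$-weight space and then appeal to Lemma~\ref{lemm:highest-wt-vectors}(2); both routes are valid.
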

\begin{proof}
First note that if $U$ satisfies the hypotheses of the corollary then so does $U^{\ast,\tau}$. Thus by Corollary \ref{coro:char-unique} it suffices to show that the action of $T$ on the one-dimensional spaces $U^N$ and $(U^{\ast,\tau})^N$ are the same. (This also implies the isomorphism is unique up to a scalar.)

Write $u^+ \in U^N$ for a highest weight vector, and let $u_1,\dotsc,u_n$ be an $L$-linear basis of $U$ with $u_1 = u^+$. Write $\lambda^+$ for the vector in $U^{\ast}$ dual to $u^+$ with respect to the basis $(u_i)$, i.e. $\lambda^+(u^+) = 1$ and $\lambda^+(u_i) = 0$ if $i > 1$. Note that $U$ is a weight module for $\mf t_L$, the action of $\mrm U(\mf n^-_L)$ on $U$ lowers weights and thus $\mrm U(\mf n^-_L)(U) \subset Lu_2 + \dotsb + Lu_n$. In particular, if $u \in U$ and $n^- \in N^-$ then Proposition \ref{prop:exponentiate-action} implies that $n^-\cdot u - u \in Lu_2+\dotsb + Lu_n$. We apply this as follows:\ if $n \in N$ then ${}^\tau \! n \in N^-$, and so we get the second equality in
\begin{equation*}
(n\cdot \lambda^+)(u) = \lambda^+({}^{\tau}\! n\cdot u) = \lambda^+(u).
\end{equation*}
This shows that $\lambda^+ \in (U^{\ast,\tau})^N$ is the unique, up to scalar, highest weight vector with respect to $B$. On the other hand, it is easy to check that $T$ acts on $\lambda^+$ via $\chi$ because $T$ is fixed point-by-point by the transpose map.
\end{proof}

\begin{exam}\label{exam:include_loc_algebraics}
If $U$ is an irreducible finite-dimensional $\mathbf Q_p$-algebraic representation of $G$ then $U$ is $\mf g$-simple. More generally, if $U$ is a locally $\mathbf Q_p$-algebraic representation which is absolutely irreducible then $U$ is $\mf g$-simple. Indeed, if $U$ is such a representation then a theorem of Prasad \cite[Theorem 1, Appendix]{st-Ugfinite} implies that $U \simeq U' \otimes \pi$ where $U'$ is an irreducible algebraic representation of $G$ and $\pi$ is an absolutely irreducible smooth representation of $G$. But since $G$ is the points of a split group $\mathbf G$, any absolutely irreducible, finite-dimensional and smooth $\pi$ is necessarily one-dimensional. Thus the $U\simeq U'$ as $\mrm U(\mf g)$-modules.
\end{exam}

\begin{rema}
Not every irreducible finite-dimensional locally analytic representation of $G$ is $\mf g$-simple. For example, see \cite[pg. 120]{st-Ugfinite}.
\end{rema}

\subsection{Internal duals of some generalized Verma modules}\label{subs:internal}
Suppose $U$ is an $L$-linear finite-dimensional locally analytic representation of $L_P$. Then we write $U^{\iota\tau}$ for the same underlying vector space $U$ equipped instead with the action of $L_P$ given by
\begin{equation*}
g\cdot_{\iota \tau} u = ({}^{\tau}\! g)^{-1}\cdot u = {}^{\tau}\! (g^{-1})\cdot u.
\end{equation*}
Since $\iota$ and $\tau$ are algebraic maps of $L_P \ra L_P$ this defines a new $L$-linear finite-dimensional locally analytic representation of $L_P$. The following lemma does not require the action of $\mf t_L$ on $U$ to be diagonalizable (compare with Remark \ref{rema:remove-diagonal-hyp}). 

\begin{lemm}\label{lemma:internal-versus-opposite}
If $U$ is an $L$-linear finite-dimensional locally analytic representation of $L_P$ then there is a natural $(\mf g,P)$-equivariant isomorphism $M_{\mf p_L}(U)^{\vee} \simeq M_{\mf p^-_L}(U^{\iota\tau})^-$.
\end{lemm}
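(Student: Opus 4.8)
The plan is to combine the explicit realization of opposite duals in terms of polynomial functions (Proposition \ref{prop:polynomia-opposite-dual}) with the duality theory of Section \ref{subsec:duality}, and then reduce the required identification to one of $(\mf g,T)$-modules via Proposition \ref{prop:homs-ok}(2). The key observation is that the internal dual and the opposite dual differ only by the anti-automorphism used ($d\tau$ versus $d\iota$), and these two anti-automorphisms are conjugate: on $\mf g_L$ the map $d\tau$ sends $\mf g_{\alpha}$ to $\mf g_{-\alpha}$ and fixes $\mf t_L$, while $d\iota$ acts by $-1$, so $d\iota \circ d\tau = d\tau \circ d\iota$ is (the differential of) a fixed anti-automorphism that preserves the root space decomposition with the sign flip absorbed. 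The operation $U \mapsto U^{\iota\tau}$ is designed precisely to track this discrepancy at the level of the $L_P$-action.

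The concrete steps I would carry out are as follows. First, I would use Proposition \ref{prop:polynomia-opposite-dual} to rewrite the right-hand side: apply it with $U$ replaced by $U^{\iota\tau}$, identifying $M_{\mf p^-_L}((U^{\iota\tau})^{\ast,\iota})^{-} \simeq C^{\pol}(N_P, U^{\iota\tau})$ as $(\mf g,P)$-modules. This reduces the statement to producing a $(\mf g,P)$-equivariant isomorphism $M_{\mf p_L}(U)^{\vee} \simeq C^{\pol}(N_P, U^{\iota\tau})$, but I need to match up the coefficient representations: I should check that $(U^{\iota\tau})^{\ast,\iota} \simeq U^{\ast,\tau}$ as $L_P$-representations (a direct unwinding, since $\iota$ and $\tau$ commute and $\iota$ is an involution), so that the generalized Verma module being dualized is $M_{\mf p^-_L}(U^{\ast,\tau})$. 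Second, I would write down the pairing between $M_{\mf p_L}(U)$ and $C^{\pol}(N_P, U^{\iota\tau})$ directly. Since $M_{\mf p_L}(U) \simeq \mrm U(\mf n_{P^-,L}) \otimes_L U$ as a vector space and $M_{\mf p_L}(U)^{\vee} = (M_{\mf p_L}(U)^{\ast,\tau})[\mf n_{P,L}^{\infty}]$, the space $M_{\mf p_L}(U)^{\vee}$ is built from functionals supported on finite-dimensional quotients $M_{\mf p_L}(U)/\mf n_{P,L}^k M_{\mf p_L}(U)$, matching the filtration of $C^{\pol}(N_P,U)$ by polynomial degree; one then defines the isomorphism on each finite-dimensional piece and passes to the limit. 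Third, having produced an $L$-linear $(\mf g, T)$-equivariant map, I would invoke Proposition \ref{prop:homs-ok}(2) (both sides are in $\mc O^P$) to upgrade automatically to a $(\mf g,P)$-equivariant isomorphism, so I never need to check full $P$-equivariance by hand.

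Alternatively — and this may be cleaner — I would argue entirely formally at the level of the two anti-automorphisms. The key identity is that for the internal dual one has $M^{\vee} = (M^{\ast,\tau})[\mf n_{P,L}^{\infty}]$ while for the opposite dual $N^{-} = (N^{\ast,\iota})[\mf n_{P^-,L}^{\infty}]$, and $M_{\mf p_L}(U)^{\ast,\tau}$ can be compared to $M_{\mf p^-_L}(W)^{\ast,\iota}$ for the correct $W$ by transporting the $\tau$-action to the $\iota$-action via precomposition with the anti-automorphism $d(\iota\circ\tau)$ of $\mrm U(\mf g_L)$, which swaps $\mf n_{P,L}$ with $\mf n_{P^-,L}$ and acts on the coefficient space $U$ through the twist producing $U^{\iota\tau}$; this is the analogue, at the level of generalized Verma modules, of the commutation relation $(M^{\vee})^{-} \simeq (M^{-})^{\vee}$ proved in Lemma \ref{lemm:dualities}(4) and Proposition \ref{prop:duality-exact}.

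\textbf{Main obstacle.} I expect the bookkeeping of the two anti-automorphisms and the twist $U \mapsto U^{\iota\tau}$ to be the delicate point: one must get exactly right how $d\tau$ acting on $\mrm U(\mf g_L) \otimes_{\mrm U(\mf p_L)} U$ translates, after identifying with functions on $N_P$ or with $\mrm U(\mf n_{P^-,L})\otimes_L U$, into the $\iota$-dual of a generalized Verma module for the \emph{opposite} parabolic — in particular the switch of parabolic $\mf p \leftrightarrow \mf p^-$ (already flagged in the proof of Proposition \ref{prop:polynomia-opposite-dual} via the careful parenthetical about $\mrm U(\mf g_L)$ acting through $d\tau$) has to be reconciled with the switch implicit in passing from $U$ to $U^{\iota\tau}$. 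Everything else is formal once the map is written on finite-dimensional pieces, since $\mc O^P$ membership plus Proposition \ref{prop:homs-ok}(2) hands over $P$-equivariance for free.
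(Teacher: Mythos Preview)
Your \emph{alternative} approach is exactly what the paper does. The paper writes down the explicit map
\[
f \longmapsto \Psi_f,\qquad \Psi_f(X\otimes u) := f\bigl(d\iota\, d\tau(X)\otimes u\bigr)
\]
from $M_{\mf p_L}(U)^{\ast,\tau}$ to $M_{\mf p^-_L}(U^{\iota\tau})^{\ast,\iota}$, checks it is well-defined on the tensor product (this is precisely where the twist $U\mapsto U^{\iota\tau}$ is forced, since $d\iota\,d\tau$ carries $\mrm U(\mf p_L)$ to $\mrm U(\mf p^-_L)$), checks $L_P$-equivariance by hand using the identity $d\iota\,d\tau(\Ad(g^{-1})X)=\Ad({}^{\tau}\!g)\,d\iota\,d\tau(X)$, and then applies $(-)[\mf n_{P,L}^\infty]$. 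Your suggestion to verify only $(\mf g,T)$-equivariance and upgrade via Proposition \ref{prop:homs-ok}(2) would also work once one has passed to the $\mf n_{P,L}^\infty$-invariants so that both sides lie in $\mc O^P$; the paper instead checks $L_P$-equivariance on the full duals directly.

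Your \emph{first} approach, routing through $C^{\pol}$, contains the bookkeeping slip you anticipated. Applying Proposition \ref{prop:polynomia-opposite-dual} with $U$ replaced by $U^{\iota\tau}$ yields
\[
C^{\pol}(N_P, U^{\iota\tau}) \;\simeq\; M_{\mf p^-_L}\bigl((U^{\iota\tau})^{\ast,\iota}\bigr)^- \;=\; M_{\mf p^-_L}(U^{\ast,\tau})^-,
\]
which is \emph{not} the right-hand side $M_{\mf p^-_L}(U^{\iota\tau})^-$ of the lemma. To rewrite that right-hand side via Proposition \ref{prop:polynomia-opposite-dual} you must solve $V^{\ast,\iota}=U^{\iota\tau}$, giving $V=U^{\ast,\tau}$, so the correct target is $C^{\pol}(N_P,U^{\ast,\tau})$; you are then left needing $M_{\mf p_L}(U)^{\vee}\simeq C^{\pol}(N_P,U^{\ast,\tau})$, which in the paper's logic is Proposition \ref{prop:poly-internal-dual} and is deduced \emph{from} the present lemma. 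So the $C^{\pol}$ detour is either circular or collapses back to the direct argument anyway.
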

\begin{proof}
This is similar to the proof of Lemma \ref{lemma:isomorphism-opposite-dual}. Consider the map
\begin{align*}
\Hom_{L}(\mrm U(\mf g_L)\otimes_{\mrm U(\mf p_L)} U, L)^\tau &\ra \Hom_{L}(\mrm U(\mf g_L)\otimes_{\mrm U(\mf p^-_L)} U^{\iota\tau}, L)^\iota\\
f &\mapsto \Psi_f
\end{align*}
given by $\Psi_f(X\otimes u) := f(d\iota d\tau (X)\otimes u)$. One must check that this is a well-defined isomorphism and equivariant for the structures of $\mrm U(\mf g_L)$-module and $L_P$-representation on each side. This being done, the result of the lemma follows by applying $(-)[{\mf n_{P,L}^\infty}]$. We check $\Psi_f$ is well-defined and $f \mapsto \Psi_f$ is $L_P$-equivariant, leaving the rest to the reader.

Let us check that $\Psi_f$ is well-defined for each $f$. The switch in the tensor products means we need to show that if $Y \in \mrm U(\mf p^-_L)$ then $\Psi_f(Y\otimes u) \overset{?}{=} \Psi_f(1\otimes Y\cdot_{\iota\tau} u)$, where the action on $u$ on the right-hand side is given by viewing $u \in U^{\iota\tau}$. But, if $Y \in \mrm U(\mf p^-_L)$ then $d\tau(Y) \in \mrm U(\mf p_L)$ and thus $d\iota d\tau(Y) \in \mrm U(\mf p_L)$ as well. We deduce that
\begin{align*}
\Psi_f(1\otimes Y\cdot_{\iota\tau} u) = f(1 \otimes d\iota d\tau(Y)\cdot u) = f(d\iota d\tau(Y) \otimes u) = \Psi_f(Y\otimes u),
\end{align*}
as desired. 

We now show $f \mapsto \Psi_f$ is equivariant for the action of $L_P$. If $g \in L_P$, $X \in \mrm U(\mf g_L)$, $u \in U$ and $f \in \Hom_{L}(\mrm U(\mf g_L)\otimes_{\mrm U(\mf p_L)} U, L)^\tau$ then
\begin{equation*}
\Psi_{g \cdot f}(X\otimes u) = (g\cdot f)(d\iota d\tau(X) \otimes u) = f(\Ad({}^{\tau}\! g) d\iota d\tau(X)\otimes {}^{\tau}\! g\cdot u)
\end{equation*}
whereas $\Psi_f \in \Hom_{L}(\mrm U(\mf g_L)\otimes_{\mrm U(\mf p^-_L)} U^{\iota\tau}, L)^\iota$ and so
\begin{equation*}
(g\cdot \Psi_f)(X\otimes u) = \Psi_f(g^{-1}(X\otimes u)) = \Psi_f(\Ad(g^{-1})X\otimes {}^{\tau}\! g\cdot u) = f(d\iota d\tau(\Ad(g^{-1} X)) \otimes {}^{\tau}\! g\cdot u).
\end{equation*}
(The third equality is because $g^{-1}$ acts on $u \in U^{\iota \tau}$ via ${}^\tau\! g = ({}^\tau\! (g^{-1}))^{-1}$.) We conclude that $\Psi_{g\cdot f}(X\otimes u) = (g\cdot \Psi_f)(X\otimes u)$ using the relation $d\iota d \tau (\Ad(g^{-1})X) = \Ad({}^{\tau}\! g)d\iota d \tau(X)$ (see Note \ref{footnote:trans-inv-relation} on page \pageref{footnote:trans-inv-relation}).
\end{proof}

\begin{rema}
For the next result, we apply Section \ref{subsec:digression}  with $\mbf L_{\mbf P}$ being the split connected reductive group.  We will use the notation $\mc O^{L_P}$ as in Section \ref{subsec:digression}, so objects in $\mc O^{L_P}$ are in particular $L$-linear finite-dimensional locally analytic representations of $L_P$ on which $\mf t_L$ acts diagonalizably. Since $\mbf L_{\mbf P}$ is a parabolic only when $\mbf P = \mbf G$, in which case either usage of $\mc O^{L_P} = \mc O^{G}$ is the same, there should be no confusion.
\end{rema}

\begin{prop}\label{prop:poly-internal-dual}
If $U$ is a $\mf l_P$-simple element of $\mc O^{L_P}$ then
\begin{equation*}
C^{\pol}(N_P,U) \simeq M_{\mf p_L}(U)^{\vee}
\end{equation*}
in the category $\mc O^P$. The isomorphism is canonical up to scalar.
\end{prop}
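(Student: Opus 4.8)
The plan is to obtain the isomorphism by chaining together three results already established in this section, the only non-formal ingredient being the self-duality $U \simeq U^{\ast,\tau}$ that $\mf l_P$-simplicity provides. First I would record the formal identity of finite-dimensional locally analytic $L_P$-representations
\begin{equation*}
U^{\iota\tau} \simeq (U^{\ast,\tau})^{\ast,\iota},
\end{equation*}
where $U^{\iota\tau}$ is the twist introduced at the start of Section \ref{subs:internal}. This is checked exactly as in the proof of Lemma \ref{lemma:internal-versus-opposite}: using the canonical identification of $U$ with its double dual $u \mapsto e_u$, the $\iota$-dual of the $\tau$-dual action sends $e_u$ to $e_{{}^\tau(g^{-1})u}$, which is precisely the $\iota\tau$-action; no hypothesis on $U$ is needed here. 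Now I invoke Corollary \ref{coro:isomorphism-transpose-dual}, applied to the split connected reductive group $\mbf L_{\mbf P}$ (as explained in the remark just above the statement): since $U$ is $\mf l_P$-simple and lies in $\mc O^{L_P}$, there is an isomorphism $U^{\ast,\tau} \simeq U$ of $L_P$-representations, unique up to a scalar in $L^\times$. Applying the contravariant functor $(-)^{\ast,\iota}$ and composing with the displayed identity yields an isomorphism $U^{\iota\tau} \simeq U^{\ast,\iota}$ of $L_P$-representations, canonical up to that same scalar.

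With this in hand the proof is a chain of previously proved isomorphisms, all landing in $\mc O^P$. By Lemma \ref{lemma:internal-versus-opposite} there is a natural $(\mf g,P)$-equivariant isomorphism $M_{\mf p_L}(U)^{\vee} \simeq M_{\mf p^-_L}(U^{\iota\tau})^-$. The isomorphism $U^{\iota\tau} \simeq U^{\ast,\iota}$ induces, by functoriality of the generalized Verma module construction of Example \ref{exam:generalized-verma-modules}, an isomorphism $M_{\mf p^-_L}(U^{\iota\tau}) \simeq M_{\mf p^-_L}(U^{\ast,\iota})$ in $\mc O^{P^-}$, and applying the opposite dual functor --- exact, contravariant, and functorial by Proposition \ref{prop:duality-exact} --- transports it to a $(\mf g,P)$-equivariant isomorphism $M_{\mf p^-_L}(U^{\iota\tau})^- \simeq M_{\mf p^-_L}(U^{\ast,\iota})^-$. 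Finally, Proposition \ref{prop:polynomia-opposite-dual} identifies $M_{\mf p^-_L}(U^{\ast,\iota})^-$ with $C^{\pol}(N_P,U)$, $(\mf g,P)$-equivariantly. Composing the three isomorphisms gives the asserted isomorphism $C^{\pol}(N_P,U) \simeq M_{\mf p_L}(U)^{\vee}$ in $\mc O^P$, canonical up to the single scalar ambiguity coming from Corollary \ref{coro:isomorphism-transpose-dual}.

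I do not anticipate a genuine obstacle: the mathematical content sits entirely in Proposition \ref{prop:polynomia-opposite-dual}, Lemma \ref{lemma:internal-versus-opposite}, and Corollary \ref{coro:isomorphism-transpose-dual}. The two points to treat carefully are (i) the bookkeeping with the algebraic maps $\iota$ and $\tau$ needed for the identity $U^{\iota\tau} \simeq (U^{\ast,\tau})^{\ast,\iota}$, and (ii) making explicit that the $\mf l_P$-simplicity hypothesis (together with $U \in \mc O^{L_P}$, so that $\mf t_L$ acts diagonalizably and the constructions of this section apply to $U$) is genuinely used --- it is exactly what forces $U^{\iota\tau} \simeq U^{\ast,\iota}$, and the proposition would fail without it.
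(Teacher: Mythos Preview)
Your proposal is correct and follows essentially the same route as the paper: both chain together Proposition \ref{prop:polynomia-opposite-dual}, Lemma \ref{lemma:internal-versus-opposite}, and Corollary \ref{coro:isomorphism-transpose-dual}, with the only non-formal step being the self-duality $U \simeq U^{\ast,\tau}$. The sole cosmetic difference is where the self-duality is inserted: the paper applies Lemma \ref{lemma:internal-versus-opposite} to $W=U^{\ast,\tau}$ (using the formal identity $(U^{\ast,\tau})^{\iota\tau}=U^{\ast,\iota}$) and then invokes $U\simeq U^{\ast,\tau}$ at the end, whereas you apply the lemma to $W=U$ and use $U^{\iota\tau}\simeq U^{\ast,\iota}$ in the middle---these are equivalent rearrangements of the same composite.
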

\begin{proof}
First, Corollary \ref{coro:isomorphism-transpose-dual} implies $U \simeq U^{\ast,\tau}$. On the other hand, we clearly have $(U^{\ast,\iota})^{\iota\tau} = U^{\ast,\tau}$ and thus Proposition \ref{prop:polynomia-opposite-dual} and Lemma \ref{lemma:internal-versus-opposite} combine to give an isomorphism
\begin{equation*}
C^{\pol}(N_P,U)\simeq M_{\mf p^-_L}(U^{\ast,\iota})^- \simeq M_{\mf p_L}(U^{\ast,\tau})^\vee \simeq M_{\mf p_L}(U)^\vee.
\end{equation*}
The final map canonical up to a scalar (by Corollary \ref{coro:isomorphism-transpose-dual}).
\end{proof}

Having realized the internal dual of $M_{\mf p_L}(U)$ concretely, we can now define an explicit $(\mf g, P)$-equivariant morphism $\alpha _U: M_{\mf{p}_L}(U) \ra M_{\mf{p}_L}(U) ^{\vee}$ which generalizes the corresponding map constructed classically in the Verma module theory \cite[Theorem 3.3(c)]{hum} (at least when $U$ is an $\mf l_P$-simple object in $\mc O^{L_P}$; this is also the context of the classical theory (compare with \cite[Observation (1), Section 9.8]{hum})).

We continue to assume that $U$ is an $\mf l_P$-simple object in $\mc O^{L_P}$. In order to define $\alpha_U$ it is enough, by Proposition \ref{prop:poly-internal-dual}, to define a $(\mf g,P)$-equivariant map
\begin{equation}\label{eqn:evaluate}
\alpha_U:M_{\mf{p}_L}(U) \ra C^{\pol}(N_P,U).
\end{equation}
Since $M_{\mf p_L}(U)$ is generated over $\mrm U(\mf g_L)$ by $U$, it is enough to define a $P$-equivariant map $U \ra C^{\pol}(N_P,U)$. But, if $u \in U$ then the function $f_u(n) = u$ on $N_P$ is clearly a polynomial $U$-valued function. Moreover, $u \mapsto f_u$ is easily checked to be $P$-equivariant.\footnote{Remember that the $P$-action on $U$ is inflated via the quotient $P \twoheadrightarrow L_P$.}

\begin{prop}\label{prop:irred-objects} If $U$ is an $\mf l_P$-simple object in $\mc O^{L_P}$ then the following conslusions hold.
\begin{enumerate}
\item The map $\alpha_U : M_{\mf p_L}(U) \ra M_{\mf p_L}(U)^{\vee}$ is unique in $\mc O^P$ up to a scalar.
\item The image of $\alpha _U$ is an irreducible object $L(U)$ in $\mc O^P$.
\item Each of $M_{\mf p_L}(U)$ and $M_{\mf p_L}(U)^{\vee}$ are finite length in $\mc O^P$ and $L(U)$ appears exactly once as a simple subquotient of $M_{\mf{p}_L}(U)$ or $M_{\mf{p}_L}(U)^{\vee}$.
\item The objects $L(U)$ in $\mc O^P$ are internal self-dual.
\item If $\chi$ is the highest weight of $U$ with respect to $B$ then there is a surjective morphism $M_{\mf b_L}(\chi) \twoheadrightarrow M_{\mf p_L}(U)$ in the category $\mc O^B$.
\end{enumerate}
\end{prop}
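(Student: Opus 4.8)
The plan is to deduce the whole proposition from the classical theory of (generalized) Verma modules recalled in Proposition~\ref{prop:verma-facts}, using the principle of Proposition~\ref{prop:homs-ok}(2)--(3) that morphisms and subobjects in $\mc O^P$ are governed by the underlying $(\mf g,T)$-structure. To set things up I would let $\chi\colon T\to L^\times$ be the highest weight of $U$ with respect to $B$ (applying Lemma~\ref{lemm:highest-wt-vectors} to the split reductive group $\mbf L_{\mbf P}$) and put $\lambda=d\chi\in\Lambda_{\mf p_L}^+$. Since $U$ is $\mf l_P$-simple with $\mf t_L$ diagonalizable, its underlying $\mrm U(\mf l_{P,L})$-module is the absolutely irreducible $W_\lambda$, so as a $\mrm U(\mf g_L)$-module $M_{\mf p_L}(U)$ is the classical generalized Verma module $M_{\mf p_L}(W_\lambda)$, and $M_{\mf p_L}(U)^\vee$ is $M_{\mf p_L}(W_\lambda)^\vee$ by Lemma~\ref{lemm:dualities}. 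By Proposition~\ref{prop:verma-facts}(3) the module $M_{\mf p_L}(W_\lambda)$ is a quotient of the Verma module $M_{\mf b_L}(\lambda)$, hence has finite length over $\mrm U(\mf g_L)$; it also has one-dimensional weight-$\lambda$ space, spanned by the image of a highest weight vector, and the same holds for $M_{\mf p_L}(W_\lambda)^\vee$. I will use these facts throughout.

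For (1) and (5) I would argue as follows. By Proposition~\ref{prop:homs-ok}(2), the $\mc O^P$-morphisms $M_{\mf p_L}(U)\to M_{\mf p_L}(U)^\vee$ are exactly the $(\mf g,T)$-equivariant ones, so they form a linear subspace of $\Hom_{\mrm U(\mf g_L)}(M_{\mf p_L}(W_\lambda),M_{\mf p_L}(W_\lambda)^\vee)$; this last space is one-dimensional, because a $\mrm U(\mf g_L)$-map out of $M_{\mf p_L}(W_\lambda)$ is determined by the image of a highest weight vector, which must land in the one-dimensional weight-$\lambda$ space of the target and be annihilated by $\mf n_L$. As $\alpha_U$ is a nonzero element of this subspace (its restriction to $U$ is injective), (1) follows. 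Moreover $M_{\mf p_L}(U)$ is a highest weight $\mrm U(\mf g_L)$-module generated by $v^+=1\otimes u^+$ of weight $\chi$, where $u^+$ is a highest weight vector of $U$, so Proposition~\ref{prop:verma-facts}(3) --- equivalently, the universal property of $M_{\mf b_L}(\chi)$ --- yields a $\mrm U(\mf g_L)$-surjection $M_{\mf b_L}(\chi)\twoheadrightarrow M_{\mf p_L}(U)$ carrying the canonical generator to $v^+$. This map is $T$-equivariant, since $T$ acts through $\chi$ on both generators and the $\mrm U(\mf g_L)$-action is $\Ad$-compatible with the $T$-action, hence a morphism in $\mc O^B$ by Proposition~\ref{prop:homs-ok}(2) applied with $P=B$; this proves (5).

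For (2) and (3) I would first combine the realization $M_{\mf p_L}(U)^\vee\simeq C^{\pol}(N_P,U)$ of Proposition~\ref{prop:poly-internal-dual} with the explicit formula for $\alpha_U$ to see that $\alpha_U$ does not annihilate the weight-$\lambda$ space. Together with Proposition~\ref{prop:verma-facts}(3)--(4) --- $M_{\mf p_L}(W_\lambda)$ has simple head $L(\lambda)$ occurring exactly once, so dually $M_{\mf p_L}(W_\lambda)^\vee$ has simple socle $L(\lambda)$ --- the usual argument (the kernel of $\alpha_U$ lies in the radical, its image contains the socle, and $\lambda$ has multiplicity one) forces the underlying $\mrm U(\mf g_L)$-module of $\Ima(\alpha_U)$ to be $L(\lambda)$. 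As $\mc O^P$-subobjects are in particular $\mrm U(\mf g_L)$-submodules, $\Ima(\alpha_U)$, which lies in $\mc O^P$, is irreducible there, and I set $L(U):=\Ima(\alpha_U)$. For (3) the same observation embeds the lattice of $\mc O^P$-subobjects of $M_{\mf p_L}(U)$ into the finite-length $\mrm U(\mf g_L)$-submodule lattice of $M_{\mf p_L}(W_\lambda)$, so $M_{\mf p_L}(U)$ --- and likewise $M_{\mf p_L}(U)^\vee$, by exactness of $\vee$ (Proposition~\ref{prop:duality-exact}) --- has finite length in $\mc O^P$. Since the $\lambda$-weight-space functor is exact on weight modules and $L(U)$, $M_{\mf p_L}(U)$ and $M_{\mf p_L}(U)^\vee$ all have one-dimensional weight-$\lambda$ space, summing dimensions along a composition series shows that at most one factor is isomorphic to $L(U)$; as $L(U)$ is both a quotient of $M_{\mf p_L}(U)$ and a subobject of $M_{\mf p_L}(U)^\vee$, it appears exactly once in each.

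Finally, for (4) I would apply the exact contravariant functor $\vee$ to $\alpha_U$ and use $(M_{\mf p_L}(U)^\vee)^\vee\simeq M_{\mf p_L}(U)$ to produce a second nonzero morphism $M_{\mf p_L}(U)\to M_{\mf p_L}(U)^\vee$; by (1) it equals $c\,\alpha_U$ for some $c\in L^\times$, and comparing images --- $\Ima(\alpha_U^\vee)=L(U)^\vee$, read off the epi--mono factorization of $\alpha_U$, against $\Ima(c\,\alpha_U)=L(U)$ --- gives $L(U)^\vee\simeq L(U)$ in $\mc O^P$. The step I expect to require the most care is (2): one has to transport the classical fact ``the image of the canonical Verma map is $L(\lambda)$'', a statement about $\mrm U(\mf g_L)$-modules, over to $\mc O^P$, which needs both the concrete $C^{\pol}(N_P,U)$-model to control $\alpha_U$ on the highest weight line and the input of Proposition~\ref{prop:homs-ok} that irreducibility and composition multiplicities in $\mc O^P$ are detected on the underlying $(\mf g,T)$-module.
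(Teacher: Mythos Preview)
Your proposal is correct and follows essentially the same route as the paper: both reduce each item to the classical theory in $\mc O^{\mf p_L}$ (Proposition~\ref{prop:verma-facts} and \cite[Theorem~3.3(c)]{hum}), using Proposition~\ref{prop:homs-ok} to pass between $(\mf g,P)$-, $(\mf g,T)$-, and $\mrm U(\mf g_L)$-structures, and both prove (4) by dualizing $\alpha_U$ and invoking (1). The paper is simply terser where you are explicit (e.g., it cites \cite{hum} directly for the irreducibility of the image in (2) rather than sketching the head/socle argument).
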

\begin{proof}
There is actually only one such map $\alpha_U$ in the category $\mc O^{\mf p_L}$ up to scalar (see \cite[Theorem 3.3(c)]{hum} and point (1) on \cite[p. 198]{hum}). This proves (1). The claims (2) and (3) also follow immediately from the analogous statements in $\mc O^{\mf p_L}$. For example, the image of $\alpha_U$ is known to be an irreducible $\mrm U(\mf g_L)$-module already (and thus irreducible in $\mc O^P$) \cite[Theorem 3.3(c)]{hum}. For (3), see Proposition \ref{prop:verma-facts}.

Let us prove (4). First, the dual map $\alpha_U^\vee: M_{\mf p_L}(U) \ra M_{\mf p_L}(U)^\vee$ is equal to $\alpha_U$ up to a scalar by part (1). Thus the image of $\alpha_U^\vee$ is equal to the image of $\alpha_U$, which is $L(U)$ by part (2). On the other hand, the image of $\alpha_U^\vee$ is also $L(U)^\vee$ by Proposition \ref{prop:duality-exact}. Thus $L(U) \simeq L(U)^\vee$.

We finish by proving (5). Let $u^+ \in U$ be the highest weight vector on which $T$ acts by $\chi$. Let $e_{\chi}^+$ be the basis of the one-dimensional $L$-vector space on which $T$ acts by $\chi$. Then we consider the $\mrm U(\mf g_L)$-equivariant map $M_{\mf b_L}(\chi) \ra M_{\mf p_L}(U)$ given by $1\otimes e_\chi^+ \mapsto 1 \otimes u^+$. This is $(\mf g,T)$-equivariant and thus also $(\mf g,B)$-equivariant by Proposition \ref{prop:homs-ok}(2). Furthermore, it is clearly surjective since $U$ is generated as a $\mrm U(\mf l_{P,L})$-module by $u^+$, and thus $M_{\mf p_L}(U)$ is generated as a $\mrm U(\mf g_L)$-module by $u^+$ as well.
\end{proof}

\subsection{Composition series}\label{subsec:verma2}
In this section we determine more precise information about the composition factors in $\mc O^P$ of $M_{\mf p_L}(U)$ and $M_{\mf p_L}(U)^{\vee}$ when $U$ is an $\mf l_P$-simple object in $\mc O^{L_P}$. If $U$ is such a representation then it has a highest weight $\chi: T \ra L^\times$, and it is a quotient of $M_{\mf b_L}(\chi)$ in the category $\mc O^B$ by Proposition \ref{prop:irred-objects}(5). By Proposition \ref{prop:homs-ok} and Corollary \ref{coro:fully-faithful}, the constituents of $M_{\mf p_L}(U)$ in $\mc O^P$ are among the constituents of $M_{\mf b_L}(\chi)$ in $\mc O^B$. Thus our focus begins with the case $\mbf P = \mbf B$. 

Let us discuss the notion of strongly linked weights. Recall we write $\Phi_{/L}$ for the roots of $\mbf T_{/L}$. Suppose that $\alpha \in \Phi_{/L}$ and denote by $\alpha$ also the corresponding root of $\mf t_L$. If $\lambda \in \mf t_L^\ast$ is a weight then we get the composition $\lambda\circ \alpha^\vee : L \rightarrow L$. We say that $\lambda$ is $\alpha$-integral if $\langle \lambda, \alpha^\vee\rangle := (\lambda\circ \alpha^\vee) (1)$ is an integer, and $\lambda$ is called $\alpha$-dominant if $\lambda$ is $\alpha$-integral and $\langle \lambda, \alpha^\vee \rangle$ is a non-negative integer. The Weyl group of $\mbf T_{/L}$ acts on the weights $\mf t_L^{\ast}$ via the ``dot action'', $w\cdot \lambda = w(\lambda + \rho_0) - \rho_0$ for all $w$ in the Weyl group, and where $\rho_0$ is the half-sum of the positive roots of $\mbf T_{/L}$. One then has the notion of strongly linked weights \cite[Section 5.1]{hum}. Namely, if $\mu$ and $\lambda$ are two weights of $\mf t_L$ then we write $\mu\uparrow \lambda$ if there exists a root $\alpha$ positive with respect to $\mbf B_{/L}$ for which $\lambda$ is $\alpha$-dominant and $\mu = s_{\alpha}\cdot \lambda$, where $s_{\alpha}$ is the simple reflection in the Weyl group of $\mbf T_{/L}$ corresponding to $\alpha$. More generally we say that $\mu$ is strongly linked to $\lambda$ if $\mu$ is connected to $\lambda$ by a sequence of $\uparrow$-links.

We adapt the notion of strong linkage to locally analytic characters of $T$. Suppose that $\chi: T \ra L^\times$ is a locally analytic character. Then, it induces a $\mathbf Q_p$-linear map $d\chi: \mathfrak t \ra L$ and thus an $L$-linear map $d\chi : \mathfrak t_L \ra L$ by extension of scalars. If $\alpha \in \Phi_{/L}$ we call $\chi$ locally $\alpha$-integral (resp. -dominant) if $d\chi$ is $\alpha$-integral (resp. -dominant). 

A priori, it is not clear how to extend the dot action to locally analytic characters, so let us start with two observations. First, if $\lambda$ is a weight of $\mf t_L$ and $\alpha \in \Phi_{/L}$ then
\begin{equation*}
s_{\alpha}\cdot \lambda - \lambda = s_{\alpha}(\lambda + {\rho_0}) - (\lambda+{\rho_0}) = -\langle \lambda + {\rho_0},\alpha^\vee\rangle\alpha.
\end{equation*}
Here we have used the explicit description of the action of simple reflections on weights. In particular, the weight $s_{\alpha}\cdot \lambda-\lambda$ depends only on $\langle \lambda + {\rho_0},\alpha^\vee\rangle$. 

Second, if $\eta: \mbf T_{/L} \ra {\mathbf G_m}_{/L}$ is an algebraic character then it induces a ($\mathbf Q_p$-)algebraic character $\eta: T \ra L^\times$ via the composition
\begin{equation*}
T = \mathbf T(K) = (\Res_{K/\mathbf Q_p} \mathbf T)(\mathbf Q_p) \subset (\Res_{K/\mathbf Q_p} \mathbf T\times_{\mathbf Q_p} L)(L) \overset{\eta}{\ra} L^\times.
\end{equation*}
A $\mbf Q_p$-algebraic character of $T$ is locally analytic, and so in particular each $\alpha \in \Phi_{/L}$ defines a natural locally analytic character of $T$.
\begin{rema}
It is worth mentioning that if $\alpha \in \Phi_{/L}$ then the induced character $\alpha: T \rightarrow L^\times$ is of the form
\begin{equation*}
T \overset{\beta}{\longrightarrow} K^\times \overset{\sigma}{\longrightarrow} L^\times
\end{equation*}
where $\beta$ is a root of $\mbf T$ and $\sigma \in \mc S$.
\end{rema}

\begin{defi}\label{defi:dot-action-integrality}
If $\alpha \in \Phi_{/L}$ and $\chi$ is a locally $\alpha$-integral character, we set $\eta =  \alpha^{-\langle d\chi + {\rho_0},\alpha^\vee\rangle}$ and then define $s_{\alpha}\cdot \chi := \chi \eta$.
\end{defi}
(Here we have used the exponential notation to emphasize the $\alpha$ is being viewed as an $L$-valued character of $T$.)

\begin{lemm}\label{lemm:uniqueness-dotaction}
Suppose that $\alpha \in \Phi_{/L}$  and $\chi$ is a locally $\alpha$-integral character. Then $s_{\alpha}\cdot \chi$ is the unique locally analytic character $\chi': T \ra L^\times$ such that 
\begin{enumerate}
\item $d\chi' = s_{\alpha}\cdot d\chi$ and 
\item $\chi'\chi^{-1}$ is $\mathbf Q_p$-algebraic.
\end{enumerate}
\end{lemm}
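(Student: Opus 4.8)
The plan is to establish existence and uniqueness separately, taking the character $\chi':=s_\alpha\cdot\chi$ of Definition \ref{defi:dot-action-integrality} as the candidate. The argument will rest on two elementary observations already isolated in the text preceding the lemma. The first is the identity $s_\alpha\cdot\lambda-\lambda=-\langle\lambda+{\rho_0},\alpha^\vee\rangle\alpha$, valid for every weight $\lambda\in\mf t_L^\ast$. The second is that, under the identification of $\Phi_{/L}$ with a subset of $\mf t_L^\ast$, the derivative of the locally analytic character $\alpha\colon T\ra L^\times$ is the root $\alpha\in\mf t_L^\ast$ itself; this is immediate, since $\alpha\colon T\ra L^\times$ is the restriction to $T$ of the algebraic character $\mbf T_{/L}\ra\mbf G_{m/L}$ whose differential is the root $\alpha$, and Lie algebras are insensitive to restriction of scalars.

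For existence I would first record that $m:=\langle d\chi+{\rho_0},\alpha^\vee\rangle$ is an integer: it equals $\langle d\chi,\alpha^\vee\rangle+\langle{\rho_0},\alpha^\vee\rangle$, the first summand lying in $\mbf Z$ because $\chi$ is locally $\alpha$-integral and the second by the standard fact that ${\rho_0}$ pairs to an integer with every coroot. Hence $\eta:=\alpha^{-m}$ is a genuine $\mbf Q_p$-algebraic character of $T$, so $\chi'\chi^{-1}=\eta$ is $\mbf Q_p$-algebraic, which is condition (2). For condition (1) I would differentiate, using $d\eta=-m\,d\alpha=-m\alpha$ together with the identity of the previous paragraph applied to $\lambda=d\chi$:
\begin{equation*}
d\chi'=d\chi+d\eta=d\chi-m\alpha=d\chi+(s_\alpha\cdot d\chi-d\chi)=s_\alpha\cdot d\chi.
\end{equation*}

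For uniqueness I would argue as follows. If $\chi'$ and $\chi''$ both satisfy (1) and (2), then $\psi:=\chi'(\chi'')^{-1}$ is $\mbf Q_p$-algebraic, being a product of the $\mbf Q_p$-algebraic characters $\chi'\chi^{-1}$ and $(\chi''\chi^{-1})^{-1}$, and $d\psi=d\chi'-d\chi''=s_\alpha\cdot d\chi-s_\alpha\cdot d\chi=0$. It then suffices to show that a $\mbf Q_p$-algebraic character of $T$ with vanishing derivative is trivial. Such a $\psi$ is the restriction to $T=\mbf T(K)$ of an algebraic character $\widetilde\psi\in X(\mbf T_{/L})$, and its $\mbf Q_p$-linear differential $d\psi$ corresponds to the $L$-linear differential $d\widetilde\psi\in\mf t_L^\ast$ under the identification $\mf t_L^\ast=\Hom_{\mbf Q_p}(\mf t,L)$. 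Since $\mbf T_{/L}$ is a split torus over the characteristic-zero field $L$, the differential map $X(\mbf T_{/L})\ra\mf t_L^\ast$ is injective, so $d\psi=0$ forces $\widetilde\psi=0$, whence $\psi$ is trivial and $\chi'=\chi''$. Nothing here is deep; the only point demanding care is this last step, where one must keep apart the $\mbf Q_p$-linear differential of $\psi$ and the $L$-linear differential of $\widetilde\psi$, and note that characteristic zero is precisely what makes the passage from algebraic characters to weights of $\mf t_L$ injective.
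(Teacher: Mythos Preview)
Your proof is correct and follows essentially the same approach as the paper's: both verify existence by differentiating the definition $s_\alpha\cdot\chi=\chi\alpha^{-m}$ and using the identity $s_\alpha\cdot\lambda-\lambda=-\langle\lambda+\rho_0,\alpha^\vee\rangle\alpha$, and both reduce uniqueness to the injectivity of the derivative on $\mathbf Q_p$-algebraic characters. The only difference is organizational---the paper computes directly that any $\chi'$ satisfying (1) and (2) must have $\chi'\chi^{-1}=\alpha^{-m}$, while you compare two candidate solutions and show their ratio is trivial---but the underlying argument is identical.
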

\begin{proof}
It is clear that $d(s_{\alpha}\cdot \chi) = s_{\alpha}\cdot d\chi$ and thus $\chi'= s_{\alpha}\cdot \chi$ satisfies the two assertions. If $\chi'\chi^{-1} = \eta$ is $\mathbf Q_p$-algebraic and $d\chi' = s_{\alpha}\cdot d\chi$ then we see immediately that $d\eta = -\langle d\chi + {\rho_0},\alpha^\vee\rangle \alpha$. Since both $\eta$ and $\alpha$ are $\mathbf Q_p$-algebraic, the equality implies that $\eta = \alpha^{-\langle d\chi + {\rho_0},\alpha^\vee\rangle}$ and we are done.
\end{proof}
 
If $\chi$ is a locally $\alpha$-integral character and it happens that $s_{\alpha}\cdot \chi$ is locally $\alpha'$-integral then we could form the iterated character $s_{\alpha'}s_{\alpha}\cdot \chi$ in the obvious way. Write $\Phi^+_{/L}$ for the elements of $\Phi_{/L}$ positive with respect to $\mbf B_{/L}$.

\begin{defi}\label{defi:strongly-linked}
Let $\chi$ and $\chi'$ be two locally analytic characters of $T$. We write $\chi' \uparrow \chi$ if $\chi' = \chi$ or there exists an $\alpha \in \Phi^+_{/L}$  such that $\chi$ is locally $\alpha$-dominant and $s_{\alpha} \cdot \chi = \chi'$. We say that $\chi'$ is strongly linked to $\chi$ if there exists a sequence $\alpha_1,\dotsc,\alpha_r \in \Phi^+_{/L}$
\begin{equation*}
\chi' = (s_{{\alpha_1}}\dotsb s_{{\alpha_r}}) \cdot \chi \uparrow  (s_{{\alpha_2}}\dotsb s_{{\alpha_r}})\cdot \chi \uparrow \dotsb \uparrow  s_{{\alpha_r}} \cdot \chi \uparrow \chi.
\end{equation*}
\end{defi}
We have the following characterization of strongly linked characters.
\begin{lemm}\label{lemm:characterization-strongly-linked}
Suppose that $\chi$ and $\chi'$ are two locally analytic characters of $T$. Then $\chi'$ is strongly linked to $\chi$ if and only if $d\chi'$ is strongly linked to $d\chi$ and $\chi'\chi^{-1}$ is $\mathbf Q_p$-algebraic.
\end{lemm}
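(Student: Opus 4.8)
The plan is to reduce strong linkage of the locally analytic characters $\chi' \uparrow \cdots \uparrow \chi$ to strong linkage of their derivatives, using the fact that the dot action $s_\alpha \cdot \chi$ was \emph{defined} (Definition~\ref{defi:dot-action-integrality}) precisely so as to be the unique lift of $s_\alpha \cdot d\chi$ differing from $\chi$ by a $\mathbf Q_p$-algebraic character (Lemma~\ref{lemm:uniqueness-dotaction}). The forward direction is essentially bookkeeping: if $\chi' = (s_{\alpha_1}\cdots s_{\alpha_r})\cdot\chi$ via a chain of $\uparrow$-links, then applying $d(-)$ to each link and using $d(s_\alpha \cdot \chi) = s_\alpha \cdot d\chi$ (established inside the proof of Lemma~\ref{lemm:uniqueness-dotaction}) shows $d\chi'$ is strongly linked to $d\chi$ in the sense of \cite[Section 5.1]{hum}; and each link multiplies $\chi$ by a $\mathbf Q_p$-algebraic character $\alpha^{-\langle d\chi+\rho_0,\alpha^\vee\rangle}$, so the product $\chi'\chi^{-1}$ is $\mathbf Q_p$-algebraic.

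For the reverse direction, suppose $d\chi'$ is strongly linked to $d\chi$ and $\eta := \chi'\chi^{-1}$ is $\mathbf Q_p$-algebraic. Choose a chain $d\chi' = (s_{\alpha_1}\cdots s_{\alpha_r})\cdot d\chi \uparrow \cdots \uparrow d\chi$ witnessing the linkage of the derivatives, with each intermediate weight $\mu_j := (s_{\alpha_{j+1}}\cdots s_{\alpha_r})\cdot d\chi$ being $\alpha_{j+1}$-dominant. I would then lift this chain inductively: having produced a locally analytic character $\chi_j$ with $d\chi_j = \mu_j$ and $\chi_j\chi^{-1}$ $\mathbf Q_p$-algebraic, observe that $\chi_j$ is locally $\alpha_{j+1}$-dominant (this only depends on $d\chi_j = \mu_j$), so $s_{\alpha_{j+1}}\cdot \chi_j$ is defined and, by Lemma~\ref{lemm:uniqueness-dotaction}, is the unique locally analytic lift of $\mu_{j-1} = s_{\alpha_{j+1}}\cdot\mu_j$ with $(s_{\alpha_{j+1}}\cdot\chi_j)\chi_j^{-1}$ $\mathbf Q_p$-algebraic; set $\chi_{j-1} := s_{\alpha_{j+1}}\cdot\chi_j$, which still differs from $\chi$ by a $\mathbf Q_p$-algebraic character since a product of such is such. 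This builds a genuine strong-linkage chain $\chi_0 \uparrow \cdots \uparrow \chi_r = \chi$ of locally analytic characters. Finally $d\chi_0 = d\chi'$ and $\chi_0\chi^{-1}$, $\chi'\chi^{-1}$ are both $\mathbf Q_p$-algebraic, so $\chi_0(\chi')^{-1}$ is $\mathbf Q_p$-algebraic with trivial derivative; since a $\mathbf Q_p$-algebraic character of $T$ is determined by its derivative (the map $X(\mathbf T_{/L}) \to \mathfrak t_L^\ast$ is injective), $\chi_0 = \chi'$, and the lifted chain witnesses that $\chi'$ is strongly linked to $\chi$.

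\textbf{Main obstacle.} The only genuinely delicate point is the last step: knowing that a $\mathbf Q_p$-algebraic character of $T = \mathbf T(K)$ is determined by its derivative on $\mathfrak t_L$. This is where the standing hypothesis $\sigma(K) \subset L$ for all $\sigma \in \mathcal S$ (so that $\mathbf T_{/L}$ splits and $X(\mathbf T_{/L})$ injects into $\mathfrak t_L^\ast$) is used, exactly as in the proof of Lemma~\ref{lemm:uniqueness-dotaction}. I would handle it by the same argument already deployed there: reduce to the case where the difference character is $\mathbf Q_p$-algebraic with vanishing derivative, decompose $\mathbf T_{/L} \simeq \prod_{\sigma} \mathbf T \times_K L_\sigma$, and note a split torus character with zero derivative is trivial. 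Everything else is routine induction and the careful but mechanical verification that the chain of $\uparrow$-links on derivatives can be threaded through Definition~\ref{defi:strongly-linked}; I would keep that verification brief by invoking Lemma~\ref{lemm:uniqueness-dotaction} at each step rather than recomputing.
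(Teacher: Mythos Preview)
Your approach is correct and is precisely the one the paper takes: the paper's own proof is a single sentence invoking Lemma~\ref{lemm:uniqueness-dotaction} together with induction on the length of the $\uparrow$-chain, and your write-up is simply a careful unpacking of that induction (including the final identification $\chi_0=\chi'$ via injectivity of $X(\mathbf T_{/L})\hookrightarrow\mathfrak t_L^\ast$, which is exactly the fact underlying the uniqueness clause of Lemma~\ref{lemm:uniqueness-dotaction}). Apart from a harmless off-by-one in your indexing of the $\alpha_j$'s, there is nothing to add.
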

\begin{proof}
Since $\chi$ is locally $\alpha$-dominant if and only if $d\chi$ is $\alpha$-dominant, the claim is clear by Lemma \ref{lemm:uniqueness-dotaction} and induction on the number of $\uparrow$ in Definition \ref{defi:strongly-linked}.
\end{proof}

\begin{lemm}\label{lemm:equivariants}
If $\chi$ and $\chi'$ are two locally analytic characters of $T$ which are strongly linked and $f: M_{\mf b_L}(\chi') \ra M_{\mf b_L}(\chi)$ is $\mrm U(\mf g_L)$-equivariant then $f$ is $T$-equivariant.
\end{lemm}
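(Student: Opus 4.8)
The plan is to reduce the claim to the corresponding statement about Verma modules over $\mrm U(\mf g_L)$ and the compatibility results already established, specifically Proposition~\ref{prop:homs-ok}(2). The key point is that a $\mrm U(\mf g_L)$-equivariant map $f: M_{\mf b_L}(\chi') \ra M_{\mf b_L}(\chi)$ automatically respects the $\mf t_L$-action (being $\mf t_L$-equivariant is part of being $\mf g_L$-equivariant), so the only thing to verify is that $f$ commutes with the group action of $T$, and by Proposition~\ref{prop:homs-ok}(2) (applied with $\mbf P = \mbf B$, so $L_P = T$) it suffices to check $T$-equivariance. Thus the whole content is to promote the infinitesimal compatibility to the group level using the hypothesis that $\chi'$ is strongly linked to $\chi$.

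First I would observe that $M_{\mf b_L}(\chi)$ is generated as a $\mrm U(\mf g_L)$-module by its highest weight line $L\cdot(1\otimes e_\chi^+)$, on which $T$ acts by $\chi$; likewise $M_{\mf b_L}(\chi')$ is generated by $1\otimes e_{\chi'}^+$, on which $T$ acts by $\chi'$. By Frobenius reciprocity for Verma modules, $f$ is determined by the image $f(1\otimes e_{\chi'}^+)$, which must be a vector of $\mf t_L$-weight $d\chi'$ in $M_{\mf b_L}(\chi)$ annihilated by $\mf n_{\mf b^-_L}$ (equivalently, a $\mrm U(\mf b^-_L)$-highest weight vector; here one uses that $\mrm U(\mf n_{\mf b^-_L})$ generates over $\mrm U(\mf b_L)$). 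Now for $t \in T$, consider the two $\mrm U(\mf g_L)$-equivariant maps $m \mapsto t\cdot f(t^{-1}\cdot m)$ and $m \mapsto f(m)$; the first makes sense because $M_{\mf b_L}(\chi)$, $M_{\mf b_L}(\chi')$ are $(\mf g, T)$-modules and the twist $X \mapsto \Ad(t)X$ preserves $\mrm U(\mf g_L)$-equivariance. Both maps send $1 \otimes e_{\chi'}^+$ into the weight-$d\chi'$ highest-weight subspace of $M_{\mf b_L}(\chi)$. By the strong linkage hypothesis, $d\chi'$ is strongly linked to $d\chi$, and the structure theory of Verma modules (strong linkage/BGG theory, as in \cite[Sections~4,5]{hum}) guarantees $\Hom_{\mrm U(\mf g_L)}(M_{\mf b_L}(\chi'), M_{\mf b_L}(\chi))$ is at most one-dimensional — equivalently, the space of $\mf n_{\mf b^-_L}$-invariant vectors of weight $d\chi'$ in $M_{\mf b_L}(\chi)$ is at most one-dimensional. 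Hence $t\cdot f(t^{-1}\cdot(1\otimes e_{\chi'}^+)) = c(t)\, f(1\otimes e_{\chi'}^+)$ for a scalar $c(t) \in L$, and by $\mrm U(\mf g_L)$-equivariance this forces $t\cdot f(t^{-1}\cdot m) = c(t)f(m)$ for all $m$.

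It remains to pin down $c(t) = 1$. Evaluating the identity $t\cdot f(t^{-1}\cdot(1\otimes e_{\chi'}^+)) = c(t)f(1\otimes e_{\chi'}^+)$ directly: $t^{-1}$ acts on $1\otimes e_{\chi'}^+$ by $\chi'(t)^{-1}$, and $t$ then acts on $f(1\otimes e_{\chi'}^+)$ — a vector of $\mf t_L$-weight $d\chi'$, but this only tells us how the Lie algebra acts, not the group. Here I would instead argue more carefully using Lemma~\ref{lemm:characterization-strongly-linked}: strong linkage gives that $\chi'\chi^{-1}$ is $\mbf Q_p$-algebraic, and $f(1\otimes e_{\chi'}^+)$ lies in $M_{\mf b_L}(\chi) \simeq \mrm U(\mf n_{\mf b^-_L}) \otimes_L L_\chi$, with the $\mrm U(\mf n_{\mf b^-_L})$-factor carrying an honest algebraic $T$-action (it is the restriction of an algebraic representation of $\mbf T_{/L}$). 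So $T$ acts on any weight-$d\chi'$ vector there through the $\mbf Q_p$-algebraic character whose derivative is $d\chi'$, namely through $\chi'$ itself (using that $\chi$ and $\chi'$ differ by an algebraic character and agree with their own derivatives on the algebraic part). Therefore $t\cdot f(1\otimes e_{\chi'}^+) = \chi'(t)\, f(1\otimes e_{\chi'}^+)$, which combined with $t^{-1}\cdot(1\otimes e_{\chi'}^+) = \chi'(t)^{-1}(1\otimes e_{\chi'}^+)$ yields $c(t) = 1$, so $f$ is $T$-equivariant, as desired.

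The main obstacle I expect is the last step: cleanly justifying that $T$ acts on the weight-$d\chi'$ highest-weight vector in $M_{\mf b_L}(\chi)$ through the character $\chi'$ and not merely through some locally analytic character with derivative $d\chi'$. This is exactly where the $\mbf Q_p$-algebraicity of $\chi'\chi^{-1}$ (from Lemma~\ref{lemm:characterization-strongly-linked}, hence from the strong linkage hypothesis) is essential, and without it the statement would be false. Everything else is a formal consequence of the one-dimensionality of $\Hom$-spaces between Verma modules under strong linkage and the generation of Verma modules by their highest weight lines.
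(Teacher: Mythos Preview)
Your proof is correct, and its core computation coincides with the paper's. Both arguments reduce to showing that $T$ acts on $f(1\otimes e_{\chi'}^+) \in M_{\mf b_L}(\chi)$ by the character $\chi'$; both do this by writing $f(1\otimes e_{\chi'}^+) = X\cdot(1\otimes e_\chi^+)$ with $X \in \mrm U(\mf n_{B,L}^-)$, observing that $X$ is a weight vector for the algebraic adjoint $T$-action with $\ad(\mf t_L)$-weight $d\chi' - d\chi$, and then invoking Lemma~\ref{lemm:characterization-strongly-linked} to identify the corresponding algebraic $T$-character as $\chi'\chi^{-1}$.

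The one place where you take a different route is the detour through the scalar $c(t)$ and the one-dimensionality of $\Hom_{\mrm U(\mf g_L)}(M_{\mf b_L}(\chi'), M_{\mf b_L}(\chi))$. This is unnecessary: once you know $t\cdot f(1\otimes e_{\chi'}^+) = \chi'(t)\,f(1\otimes e_{\chi'}^+)$, the $T$-equivariance of $f$ on all of $M_{\mf b_L}(\chi')$ follows immediately from $\mrm U(\mf g_L)$-equivariance and the fact that $1\otimes e_{\chi'}^+$ generates $M_{\mf b_L}(\chi')$ over $\mrm U(\mf g_L)$ (the identity $t\cdot(Z\cdot v) = \Ad(t)Z\cdot(t\cdot v)$ propagates equivariance from the generator). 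The paper proceeds exactly this way, first noting (via \cite[Theorem~4.2]{hum}) that $f$ is zero or injective and then viewing $e_{\chi'}^+$ directly inside $M_{\mf b_L}(\chi)$. Your approach is not wrong, just slightly longer.

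One small slip: in your first paragraph you write that $f(1\otimes e_{\chi'}^+)$ is annihilated by $\mf n_{\mf b^-_L}$; it should be $\mf n_{\mf b_L}$, the nilpotent radical of $\mf b_L$ itself.
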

\begin{proof}
This is an explicit computation. If $f = 0$ then the result is trivial. If $f\neq 0$ then by \cite[Theorem 4.2]{hum} the map $f$ is injective. Thus we suppress it from the notation and just view $M_{\mf b_L}(\chi')$ as a $\mrm U(\mf g_L)$-submodule of $M_{\mf b_L}(\chi)$. We need to show that it is a $T$-submodule as well.

Write $e_{\chi}^+$ (resp.\ $e_{\chi'}^+$) for the highest weight vector in $M_{\mf b_L}(\chi)$ (resp.\ $M_{\mf b_L}(\chi')$). Since $M_{\mf b_L}(\chi)$ is generated by $e_{\chi}^+$ over $\mrm U(\mf n_{B,L}^-)$ we can write $e_{\chi'}^+ = Xe_{\chi}^+$ for some $X \in \mrm U(\mf n_{B,L}^-)$. If $Y \in \mf t_L$ then
\begin{equation*}
YXe_\chi^+ = (XY + \ad(Y)(X))e_\chi^+ = (d\chi(Y)X + \ad(Y)(X))e_\chi^+
\end{equation*}
and
\begin{equation*}
YXe_\chi^+ = Ye_{\chi'}^+ = d\chi'(Y)e_{\chi'}^+ = d\chi'(Y)X e_{\chi}^+.
\end{equation*}
Thus $\ad(Y)(X) - (d\chi'(Y)-d\chi(Y))X$ annihilates $e_\chi^+$. But $X, \ad(Y)(X) \in \mrm U(\mf n_{B,L}^-)$ and $M_{\mf b_L}(\chi) = \mrm U(\mf n_{B,L}^-) \otimes_L Le_\chi^+$. Thus we deduce that $\ad(Y)(X) = (d\chi'(Y)-d\chi(Y))X$.

But now the $\ad$-action of $\mf t_L$ and the $\mathbf Q_p$-algebraic adjoint action of $T$ on $\mrm U(\mf n_{B,L}^-)$ have the same eigenvectors. We deduce that $X \in \mrm U(\mf n_{B,L}^-)$ is an eigenvector for the adjoint action of $T$, with a $\mathbf Q_p$-algebraic eigensystem we write $\psi_X$. To finish the proof, we just need to show that $\psi_X = \chi' \chi^{-1}$.  To see that, we note that since $\chi'$ is strongly linked to $\chi$, the character $\chi' \chi^{-1}$ is $\mbf{Q}_p$-algebraic (Lemma \ref{lemm:characterization-strongly-linked}) and thus the equality $\psi_X = \chi'\chi^{-1}$ is equivalent to $d(\psi_X) = d\chi' - d\chi$. But this equality follows from our computation of $\ad(Y)(X)$ above.
\end{proof}

\begin{prop}\label{prop:verma-compositions}
If $\chi$ and $\chi'$ are two locally analytic characters of $T$ then $L(\chi')$ is a composition factor of $M_{\mf b_L}(\chi)$ in $\mc O^B$ if and only if $\chi'$ is strongly linked to $\chi$. Moreover, all the composition factors of $M_{\mf b_L}(\chi)$ in $\mc O^B$ are of this form.
\end{prop}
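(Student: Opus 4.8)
The plan is to deduce the proposition from the classical strong linkage (Bernstein--Gelfand--Gelfand) theorem for Verma modules over $\mrm U(\mf g_L)$, see \cite[Chapter 5]{hum}; the only extra work is to keep track of the $(\mf g,B)$-module structure in order to pass from weights in $\mf t_L^\ast$ to locally analytic characters of $T$.

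The first step is to show that the $\mrm U(\mf g_L)$-submodules of $M_{\mf b_L}(\chi)$ coincide with its $(\mf g,B)$-submodules, so that a composition series of $\mrm U(\mf g_L)$-modules is simultaneously a composition series in $\mc O^B$. By the Poincar\'e--Birkhoff--Witt theorem, $M_{\mf b_L}(\chi)\simeq \mrm U(\mf n_{B,L}^-)\otimes_L Le_\chi^+$ as a representation of $B = TN$, with $N$ acting by exponentiation of $\mf n$ and $T$ acting by the $\mbf Q_p$-algebraic adjoint action on $\mrm U(\mf n_{B,L}^-)$ twisted by $\chi$ on $e_\chi^+$. Since a $\mbf Q_p$-algebraic character of $T$ is determined by its differential (compare the argument in the proof of Lemma \ref{lemm:uniqueness-dotaction}), the $\mf t_L$-weight spaces of $M_{\mf b_L}(\chi)$ are exactly its $T$-eigenspaces: for $\psi$ a $\mbf Q_p$-algebraic character one has $M_{\mf b_L}(\chi)^{T=\psi\chi} = M_{\mf b_L}(\chi)_{d\psi+d\chi}$, and every $T$-eigensystem occurring in $M_{\mf b_L}(\chi)$ has the form $\psi\chi$ with $\psi$ $\mbf Q_p$-algebraic. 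In particular any $\mrm U(\mf g_L)$-submodule of $M_{\mf b_L}(\chi)$, being a weight module, is $T$-stable, hence $(\mf g,T)$-stable, hence $(\mf g,B)$-stable by Proposition \ref{prop:homs-ok}(3); the reverse containment is trivial. As $M_{\mf b_L}(\chi)$ has finite length in $\mc O^B$ (Proposition \ref{prop:irred-objects}(3)), we may fix such a common composition series $0 = M_0 \subsetneq M_1 \subsetneq \dotsb \subsetneq M_n = M_{\mf b_L}(\chi)$.

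The second step matches factors with characters. Each $M_i/M_{i-1}$ is a simple object of $\mc O^B$, hence isomorphic to $L(\chi_i)$ for the unique locally analytic character $\chi_i$ by which $T$ acts on its one-dimensional space of $\mf n$-invariants (cf. the uniqueness argument of Proposition \ref{prop:highest-weight-vectors-unique}); as a $\mrm U(\mf g_L)$-module it is $L(d\chi_i)$. By the classical BGG theorem \cite[Chapter 5]{hum}, the multiset $\{d\chi_1,\dotsc,d\chi_n\}$ consists precisely of the weights strongly linked to $d\chi$, counted with multiplicity. Moreover $\chi_i$ occurs as a $T$-eigensystem of $M_{\mf b_L}(\chi)$, so $\chi_i\chi^{-1}$ is $\mbf Q_p$-algebraic by the first step; combined with $d\chi_i$ being strongly linked to $d\chi$, Lemma \ref{lemm:characterization-strongly-linked} shows $\chi_i$ is strongly linked to $\chi$. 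This gives the ``only if'' and ``moreover'' assertions. For ``if'', suppose $\chi'$ is strongly linked to $\chi$; then $d\chi'$ is strongly linked to $d\chi$ (Lemma \ref{lemm:characterization-strongly-linked}), so $d\chi' = d\chi_i$ for some $i$, and $\chi_i(\chi')^{-1} = (\chi_i\chi^{-1})(\chi'\chi^{-1})^{-1}$ is $\mbf Q_p$-algebraic (both factors algebraic) with vanishing differential, hence trivial; thus $\chi_i = \chi'$ and $L(\chi')$ is a composition factor.

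I expect the only genuine point to be the first step --- that the $(\mf g,B)$-structure introduces no submodules beyond the evident ``algebraic twist'' indexing, equivalently that each $\mf t_L$-weight space of $M_{\mf b_L}(\chi)$ is a single $T$-eigenspace; the rest is bookkeeping on top of the classical theorem. Alternatively, the ``if'' direction can be obtained directly: given $\chi'$ strongly linked to $\chi$, a nonzero $\mrm U(\mf g_L)$-homomorphism $M_{\mf b_L}(d\chi')\ra M_{\mf b_L}(d\chi)$ (which exists by the classical Verma embedding theorem, and is injective) is automatically $T$-equivariant by Lemma \ref{lemm:equivariants}, hence $(\mf g,B)$-equivariant by Proposition \ref{prop:homs-ok}(2), so $M_{\mf b_L}(\chi')$ is a subobject of $M_{\mf b_L}(\chi)$ in $\mc O^B$; since $L(\chi')$ is a composition factor of $M_{\mf b_L}(\chi')$ in $\mc O^B$ by Proposition \ref{prop:irred-objects}, it is one of $M_{\mf b_L}(\chi)$ as well.
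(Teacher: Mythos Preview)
Your proof is correct and follows essentially the same approach as the paper: both reduce to the classical BGG linkage theorem together with the observation that in $M_{\mf b_L}(\chi)$ the $\mf t_L$-weight spaces coincide with the $T$-eigenspaces (so that the $(\mf g,B)$-structure adds nothing beyond the $\mrm U(\mf g_L)$-structure). You package this observation up front as the structural lemma ``$\mrm U(\mf g_L)$-submodules $=$ $(\mf g,B)$-submodules'' and then read off both directions at once, whereas the paper uses it more implicitly --- via Lemma~\ref{lemm:equivariants} for the ``if'' direction and via the $T$-diagonalizability of $M_{\mf b_L}(\chi)$ for the ``only if'' direction; your alternative argument for ``if'' at the end is exactly the paper's.
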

\begin{rema}
Proposition \ref{prop:verma-compositions} is a more precise version of Theorem \ref{theo:constituents}(a) below since we specify that {\em if} $\chi'$ is strongly linked to $\chi$ {\em then} $L(\chi')$ actually appears as a composition factor in $M_{\mf b}(\chi)$.
\end{rema}

\begin{proof}[Proof of Proposition \ref{prop:verma-compositions}]
Suppose first that $\chi'$ is strongly linked to $\chi$. By BGG reciprocity \cite[Theorem 5.1(a)]{hum} $L(\chi')$ is a composition factor of $M_{\mf b_L}(\chi)$ as a $\mrm U(\mf g_L)$-module, and there exists a non-zero $\mrm U(\mf g_L)$-equivariant morphism $M_{\mf b_L}(\chi') \ra M_{\mf b_L}(\chi)$. By Proposition \ref{prop:homs-ok}(2) and Lemma \ref{lemm:equivariants},  the morphism is automatically a morphism in the category $\mc O^B$ and so $L(\chi')$ is a subquotient of $M_{\mf b_L}(\chi)$ in $\mc O^B$.

Now suppose that $\Lambda$ is an irreducible subquotient of $M_{\mf b}(\chi)$ in the category $\mc O^B$.  We note that $T$ acts diagonalizably on $M_{\mf b_L}(\chi)$ (because $T$ acts semi-simply on $\mrm U(\mf n_{B,L}^-)$ and $M_{\mf b}(\chi)$ is generated over $\mrm U(\mf n_{B,L}^-)$ by an eigenvector for $T$). Choose a highest weight $\mu$ for $\Lambda$, i.e. $\mrm U(\mf n_{B,L}) \cdot \Lambda_{\mu} = (0)$ where $\Lambda_\mu$ is the set of vectors $x \in \Lambda$ on which $\mf t_L$ acts by $\mu$. The $\mu$-eigenspace $\Lambda_{\mu}$ is $T$-stable, thus a sum of one-dimensional $T$-stable spaces, so we can choose a non-zero vector $v \in \Lambda_{\mu}$ which is an eigenvector for $T$. 

Since $\mf n_{B,L}$ annihilates $v$, $B$ acts on $v$ through a locally analytic character $\chi'$ (Proposition \ref{prop:homs-ok}(1)). But then $v$ defines a non-zero morphism $M_{\mf b_L}(\chi') \ra \Lambda$ in $\mc O^B$. Since $\Lambda$ is irreducible, it is an irreducible quotient of $M_{\mf b}(\chi')$. But then by Proposition \ref{prop:irred-objects} we have $\Lambda \simeq L(\chi')$. Finally, we need to check that $\chi'$ is strongly linked to $\chi$. On the level of the Lie algebra, $d\chi'$ is strongly linked to $d\chi$ by \cite[Theorem 5.2(b)]{hum}. Moreover, $\chi'$ appears as a $T$-eigensystem in $M_{\mf b_L}(\chi)$ and thus $\chi'\chi^{-1}$ is $\mathbf Q_p$-algebraic. We conclude by Lemma \ref{lemm:characterization-strongly-linked}.
\end{proof}

If $U$ is an $\mf l_P$-simple object in $\mc O^{L_P}$ with highest weight $\chi$ then $M_{\mf p_L}(U)$ is a quotient of $M_{\mf b_L}(\chi)$ in the category $\mc O^B$. Thus, by the full faithfulness of $\mc O^P \subset \mc O^B$ (see Proposition \ref{coro:fully-faithful}) now know the composition factors of $M_{\mf p_L}(U)$ are among the $L(\chi')$ with $\chi'$ strongly linked to $\chi$. However, this does not explicitly explain how to realize the $(\mf g,B)$-module $L(\chi')$ inside the category $\mc O^P$.

\begin{lemm}\label{lemma:L(chi)inOP}
Suppose that $\chi: T \ra L^\times$ is a locally analytic character and $L(\chi)$ is in $\mc O^P$. Then the following conclusions hold.
\begin{enumerate}
\item $d\chi \in \Lambda_{\mf p_L}^+$.
\item If $\bar e_\chi^+ \in L(\chi)$ is the highest weight vector for the action of $B$ then any $\mrm U(\mf p_L)$-stable subspace of $L(\chi)$ containing $\bar e_\chi^+$ is also $T$-stable.
\item The $\mrm U(\mf p_L)$-submodule generated by $\bar e_\chi^+$ inside $L(\chi)$ is finite-dimensional, $\mrm U(\mf p_L)$-irreducible, $\mrm U(\mf n_{P,L})$ acts trivially on it, and $\mf t_L$ acts diagonalizably.
\end{enumerate}
\end{lemm}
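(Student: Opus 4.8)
The plan is to deduce (1) and (3) from the classical Verma module theory recalled in Proposition~\ref{prop:verma-facts} (applied to $\mf g_L\supset\mf p_L\supset\mf t_L$), via the identification of the underlying $\mrm U(\mf g_L)$-module of $L(\chi)$ with the simple highest weight module $L(d\chi)$, and to prove (2) by showing that on $L(\chi)$ the $T$-eigenspace decomposition coincides with the $\mf t_L$-weight space decomposition. For (1) the argument is short: since $L(\chi)\in\mc O^P$, its underlying $\mrm U(\mf g_L)$-module lies in $\mc O^{\mf p_L}$, and by Proposition~\ref{prop:verma-facts}(2) every irreducible object of $\mc O^{\mf p_L}$ has the form $L(\lambda)$ with $\lambda\in\Lambda_{\mf p_L}^+$; hence $d\chi\in\Lambda_{\mf p_L}^+$.

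For (2), I would first recall that $L(\chi)$, being a simple highest weight module, is generated over $\mrm U(\mf n_{B,L}^-)$ by $\bar e_\chi^+$, on which $T$ acts through $\chi$. The $\Ad$-action of $T$ on $\mrm U(\mf n_{B,L}^-)$ is semisimple with all occurring eigensystems $\mathbf Q_p$-algebraic characters of $T$ (as in the proof of Proposition~\ref{prop:verma-compositions}); so, writing an arbitrary vector of $L(\chi)$ as a sum of vectors $X\bar e_\chi^+$ with $X$ an $\Ad(T)$-eigenvector of $\mathbf Q_p$-algebraic eigensystem $\psi$, Definition~\ref{defi:loc-anal-defi}(2) gives $t\cdot(X\bar e_\chi^+)=(\chi\psi)(t)\,X\bar e_\chi^+$. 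Thus $T$ acts diagonalizably on $L(\chi)$, every $T$-eigensystem has the form $\chi\psi$ with $\psi$ a $\mathbf Q_p$-algebraic character, and the $(\chi\psi)$-eigenspace sits inside the $\mf t_L$-weight space of weight $d\chi+d\psi$. Since a $\mathbf Q_p$-algebraic character of the split torus $T$ is determined by its derivative, distinct $T$-eigensystems occurring in $L(\chi)$ have distinct derivatives, so each $\mf t_L$-weight space of $L(\chi)$ is a single $T$-eigenspace, on which $T$ acts through one character. It follows that every $\mf t_L$-stable subspace of $L(\chi)$ is $T$-stable, and a fortiori so is every $\mrm U(\mf p_L)$-stable one (the hypothesis that the subspace contains $\bar e_\chi^+$ is used only to match the statement).

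For (3), by (1) we have $d\chi\in\Lambda_{\mf p_L}^+$, so Proposition~\ref{prop:verma-facts}(5), applied with $\lambda=d\chi$, identifies the $\mrm U(\mf p_L)$-submodule of $L(d\chi)=L(\chi)$ generated by the image of $e_{d\chi}^+\in M_{\mf b_L}(d\chi)$ with $W_{d\chi}$. That image spans the one-dimensional weight space $L(\chi)_{d\chi}$, which also contains $\bar e_\chi^+$, so this submodule is exactly $\mrm U(\mf p_L)\bar e_\chi^+$. Finally, $W_{d\chi}$ is by construction a finite-dimensional absolutely irreducible $\mrm U(\mf l_{P,L})$-module on which $\mf t_L$ acts diagonalizably, inflated to a $\mf p_L$-module on which $\mf n_{P,L}$ acts by zero; since its $\mrm U(\mf p_L)$-submodules are exactly its $\mrm U(\mf l_{P,L})$-submodules, it is also $\mrm U(\mf p_L)$-irreducible. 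Reading off these four properties finishes (3).

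The manipulations in (1) and (3) are routine once the translation to Proposition~\ref{prop:verma-facts} is in place. The step that requires genuine care is (2): the delicate point is the interplay between the group action of $T$ and the Lie algebra action of $\mf t_L$, and it is resolved by exploiting that $L(\chi)$ is generated over $\mrm U(\mf n_{B,L}^-)$ by a single $T$-eigenvector together with the fact that the $\Ad$-action of $T$ on $\mrm U(\mf n_{B,L}^-)$ is $\mathbf Q_p$-algebraic.
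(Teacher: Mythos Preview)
Your proof is correct. Parts (1) and (3) match the paper's argument essentially verbatim: both reduce to the classical theory via Proposition~\ref{prop:verma-facts}, with (1) using that irreducibles in $\mc O^{\mf p_L}$ have highest weight in $\Lambda_{\mf p_L}^+$ and (3) using Proposition~\ref{prop:verma-facts}(5) to identify $\mrm U(\mf p_L)\bar e_\chi^+$ with $W_{d\chi}$.

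For (2) you take a different and somewhat more thorough route. The paper gives a two-line calculation: for $X\in\mrm U(\mf p_L)$ and $t\in T$ one has $t\cdot X\bar e_\chi^+=\chi(t)(\Ad(t)X)\bar e_\chi^+$, and since $\Ad(t)$ preserves $\mrm U(\mf p_L)$ the result lies in $\mrm U(\mf p_L)\bar e_\chi^+$. Strictly read, this only treats the subspace $\mrm U(\mf p_L)\bar e_\chi^+$ itself (which is all that is used later, in Proposition~\ref{prop:existence-findimirred}). Your argument instead establishes the stronger fact that on $L(\chi)$ the $T$-eigenspace decomposition coincides with the $\mf t_L$-weight space decomposition, so \emph{every} $\mf t_L$-stable subspace is $T$-stable; as you note, the hypothesis that the subspace contain $\bar e_\chi^+$ becomes superfluous. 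The paper's approach is quicker for its intended application, while yours yields a cleaner structural statement about $L(\chi)$ that covers the full assertion of (2).
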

\begin{proof}
To prove (1) we just note that our hypothesis implies that $L(\chi)$ is in the BGG category $\mc O^{\mf p_L}$ as a $\mrm U(\mf g_L)$-module. Thus $d\chi \in \Lambda_{\mf p_L}^+$ by \cite[Proposition 9.3(e)]{hum}.

Proving (2) is a short calculation. Let $X \in \mrm U(\mf p_L)$ and $t \in T$. Then 
\begin{equation*}
t X \bar e_\chi^+ = (\Ad(t) X) \cdot te_\chi^+ = \chi(t)(\Ad(t)X)e_\chi^+.
\end{equation*}
Since $\Ad(t)$ stabilizes $\mrm U(\mf p_L)$, the right-hand side is in the $\mrm U(\mf p_L)$-module generated by $\bar e_\chi^+$.

We end by proving (3). By part (1) of this lemma, $d\chi \in \Lambda_{\mf p_L}^+$. Thus there exists a finite-dimensional simple $\mrm U(\mf l_{P,L})$-module $W_{d\chi}$ as in Section \ref{subsec:verma-reminder}. By Proposition \ref{prop:verma-facts}(5), $W_{d\chi}$ is exactly the $\mrm U(\mf p_L)$-submodule generated by $\bar e_\chi^+$ inside $L(\chi)$. In particular, it is finite-dimensional, $\mrm U(\mf p_L)$-irreducible, $\mrm U(\mf n_{P,L})$ acts trivially and $\mf t_L$ acts diagonalizably.
\end{proof}

\begin{theo}\label{theo:constituents}
If $U$ is an $\mf l_P$-simple object in $\mc O^{L_P}$ with highest weight $\chi$, then:
\begin{enumerate}
\item Every irreducible constituent of $M_{\mf p_L}(U)$ in $\mc O^P$ is of the form $L(\chi')$ for some $\chi'$ strongly linked to $\chi$ and $d\chi' \in \Lambda_{\mf p_L}^+$.
\item The list of irreducible constituents of $M_{\mf p_L}(U)$ and $M_{\mf p_L}(U)^{\vee}$ are the same, with multiplicity.
\end{enumerate}
\end{theo}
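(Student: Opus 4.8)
The plan is to get (1) by pushing the composition series of the Verma module $M_{\mf b_L}(\chi)$ down to $M_{\mf p_L}(U)$, and to get (2) from the exactness and self-duality properties of the internal dual.

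For (1), I would start from the surjection $M_{\mf b_L}(\chi)\twoheadrightarrow M_{\mf p_L}(U)$ in $\mc O^B$ furnished by Proposition \ref{prop:irred-objects}(5), together with the finite length of $M_{\mf p_L}(U)$ in $\mc O^P$ (Proposition \ref{prop:irred-objects}(3)). Fixing a composition series of $M_{\mf p_L}(U)$ in $\mc O^P$ and forgetting down to $\mrm U(\mf g_L)$-modules (legitimate by the full faithfulness of $\mc O^P\subset\mc O^B$, Corollary \ref{coro:fully-faithful}), each graded piece becomes a subquotient of the $\mrm U(\mf g_L)$-module $M_{\mf b_L}(\chi)$, hence is $L(\chi')$ for some $\chi'$ strongly linked to $\chi$ by Proposition \ref{prop:verma-compositions}. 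Since these $L(\chi')$ are objects of $\mc O^P$, their underlying modules lie in $\mc O^{\mf p_L}$, so Lemma \ref{lemma:L(chi)inOP}(1) (equivalently Proposition \ref{prop:verma-facts}(2)) forces $d\chi'\in\Lambda_{\mf p_L}^+$. This gives (1).

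For (2), I would use that $(-)^\vee$ is an exact contravariant functor on $\mc O^P$ (Proposition \ref{prop:duality-exact}) which, as in the proof of Lemma \ref{lemm:dualities}(3), is a contravariant self-equivalence and in particular takes irreducible objects to irreducible objects. Dualizing a composition series $0=M_0\subset\dots\subset M_n=M_{\mf p_L}(U)$ term by term turns each $0\to M_{i-1}\to M_i\to M_i/M_{i-1}\to 0$ into $0\to(M_i/M_{i-1})^\vee\to M_i^\vee\to M_{i-1}^\vee\to 0$, exhibiting a filtration of $M_{\mf p_L}(U)^\vee$ whose graded pieces are $(M_i/M_{i-1})^\vee$. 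Hence the multiset of constituents of $M_{\mf p_L}(U)^\vee$ in $\mc O^P$ is $\{\,S^\vee : S\text{ a constituent of }M_{\mf p_L}(U)\,\}$, and it remains to prove $S^\vee\simeq S$ for each such $S$.

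The hard part is precisely this self-duality of the constituents, and I would handle it by recognizing each $S$ as one of the objects covered by Proposition \ref{prop:irred-objects}(4). By (1), $S\simeq L(\chi')$ with $d\chi'\in\Lambda_{\mf p_L}^+$; by Lemma \ref{lemma:L(chi)inOP}(2)--(3) and Proposition \ref{prop:homs-ok}(3), the $\mrm U(\mf p_L)$-submodule $U'$ of $L(\chi')$ generated by the $B$-highest weight vector is a $P$-stable subspace which, inflated from $L_P$, is an $\mf l_P$-simple object of $\mc O^{L_P}$ with highest weight $\chi'$; then $M_{\mf p_L}(U')\twoheadrightarrow L(\chi')$ together with Proposition \ref{prop:irred-objects}(2)--(3) and Proposition \ref{prop:verma-facts}(3) identifies $S\simeq L(U')$, so Proposition \ref{prop:irred-objects}(4) yields $S^\vee\simeq S$ and finishes (2). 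An alternative route, avoiding this identification, is to note that classically internal duality on $\mc O^{\mf p_L}$ fixes every simple $\mrm U(\mf g_L)$-module, and that $S$ and $S^\vee$ have the same $B$-highest weight character (the $\tau$-action fixes $\mf t_L$ and $T$ pointwise), after which Proposition \ref{prop:homs-ok}(2) promotes the resulting $(\mf g,T)$-isomorphism $S\simeq S^\vee$ to an isomorphism in $\mc O^P$.
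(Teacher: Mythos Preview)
Your proof is correct and follows essentially the same route as the paper. One small point: in part (1), the passage from ``irreducible in $\mc O^P$'' to ``irreducible in $\mc O^B$'' is not quite a consequence of full faithfulness (Corollary \ref{coro:fully-faithful}) alone---the paper invokes Proposition \ref{prop:homs-ok}(3) instead, which shows that $(\mf g,P)$-stable subspaces coincide with $(\mf g,T)$-stable (hence $(\mf g,B)$-stable) ones. For part (2), the paper's one-line justification cites Proposition \ref{prop:duality-exact}, part (1), and Proposition \ref{prop:irred-objects}(4); your argument is the same but makes explicit the step (essentially Proposition \ref{prop:existence-findimirred}) that each constituent $L(\chi')$ is of the form $L(U')$ so that Proposition \ref{prop:irred-objects}(4) applies.
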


\begin{proof}
Part (2) follows from Proposition \ref{prop:duality-exact}, part (1) of this theorem and Proposition \ref{prop:irred-objects}(4). It remains to show part (1). We remark that we have already shown part (1) when $\mbf P = \mbf B$. Suppose that $\Lambda$ is an irreducible constituent of $M_{\mf p_L}(U)$ in $\mc O^P$. Then $\Lambda$ is also an irreducible object in $\mc O^B$ by Proposition \ref{prop:homs-ok}(3). Since $M_{\mf p_L}(U)$ is a quotient of $M_{\mf b_L}(\chi)$ in $\mc O^B$ we deduce from Proposition \ref{prop:verma-compositions} that $\Lambda \simeq L(\chi')$ for some $\chi'$ strongly linked to $\chi$. Since $L(\chi') \in \mc O^P$, we see $d\chi' \in \Lambda_{\mf p_L}^+$ from Lemma \ref{lemma:L(chi)inOP}(1).
\end{proof}
It is worth wondering about the explicit $(\mf g,P)$-structure on a constituent $L(\chi')$ appearing in Theorem \ref{theo:constituents}(a). We end with the following result.

\begin{prop}\label{prop:existence-findimirred}
Suppose that $\chi : T \ra L^\times$ is a locally analytic character and $L(\chi)$ is in $\mc O^P$. Then there exists an $\mf l_P$-simple object $U$ in $\mc O^{L_P}$ with highest weight $\chi$.
\end{prop}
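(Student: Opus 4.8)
The plan is to exhibit the desired module $U$ directly inside $L(\chi)$, bootstrapping from the structural information already recorded in Lemma \ref{lemma:L(chi)inOP}. Let $\bar e_\chi^+ \in L(\chi)$ be the highest weight vector for $B$ and let $U \subset L(\chi)$ be the $\mrm U(\mf p_L)$-submodule it generates. By Lemma \ref{lemma:L(chi)inOP}(1) the hypothesis $L(\chi) \in \mc O^P$ already gives $d\chi \in \Lambda_{\mf p_L}^+$, and by Lemma \ref{lemma:L(chi)inOP}(3) the submodule $U$ is finite-dimensional, irreducible over $\mrm U(\mf p_L)$, with $\mrm U(\mf n_{P,L})$ acting trivially and $\mf t_L$ acting diagonalizably; moreover Lemma \ref{lemma:L(chi)inOP}(2) shows that $U$, being a $\mrm U(\mf p_L)$-stable subspace containing $\bar e_\chi^+$, is $T$-stable. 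So the first step costs nothing: we have a finite-dimensional $(\mf p,T)$-stable subspace of $L(\chi)$, annihilated by $\mf n_{P,L}$, on which $\mf t_L$ is diagonalizable.

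Next I would promote this $(\mf p,T)$-structure to a genuine $L_P$-representation. Since $L(\chi)\in\mc O^P$ and $U$ is $(\mf p,T)$-stable, Proposition \ref{prop:homs-ok}(3) makes $U$ a $P$-stable subspace, hence a finite-dimensional locally analytic representation of $P$. Because $\mrm U(\mf n_{P,L})$ kills $U$ and, by Proposition \ref{prop:homs-ok}(1), the action of $N_P$ on any subspace of an object of $\mc O^P$ is the exponentiated action of $\mf n_P$, the group $N_P$ acts trivially on $U$; thus the $P$-action factors through $P \twoheadrightarrow L_P$ (equivalently, one restricts the $P$-action along $L_P \hookrightarrow P$), making $U$ a finite-dimensional locally analytic representation of $L_P$. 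Together with diagonalizability of $\mf t_L$ this places $U$ in $\mc O^{L_P}$; note that $U$ is then canonically the module $W_{d\chi}$ of Section \ref{subsec:verma-reminder} equipped with a locally analytic $L_P$-structure.

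It then remains to verify the two named properties. For $\mf l_P$-simplicity: since $\mrm U(\mf n_{P,L})$ acts trivially on $U$, its $\mrm U(\mf l_{P,L})$-submodules and its $\mrm U(\mf p_L)$-submodules coincide, and $U$ is $\mrm U(\mf p_L)$-irreducible by construction; hence $U$ is irreducible over $\mrm U(\mf l_{P,L})$, i.e. $\mf l_P$-simple. For the highest weight: $\bar e_\chi^+ \in U$ is nonzero and annihilated by $\mf n_{B,L} \supset \mf n_{B\cap L_P,L}$, so applying Proposition \ref{prop:exponentiate-action} to the finite-dimensional representation $U$ of the Borel $B\cap L_P$ of $L_P$ shows $\bar e_\chi^+$ is fixed by the unipotent radical $N\cap L_P \subset N$; by Lemma \ref{lemm:highest-wt-vectors}(2) the space $U^{N\cap L_P}$ is one-dimensional, so $\bar e_\chi^+$ spans it, and $T$ acts on it via $\chi$. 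Thus the highest weight of $U$ with respect to $B\cap L_P$ is exactly $\chi$, and this $U$ is the object sought.

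I do not expect a serious obstacle here: the substance is already in Lemma \ref{lemma:L(chi)inOP} and Proposition \ref{prop:homs-ok}. The step requiring the most care is the passage from "$(\mf p,T)$-stable" to "$L_P$-representation": one must first invoke Proposition \ref{prop:homs-ok}(3) to upgrade to $P$-stability, and then check that $N_P$ genuinely acts trivially (not merely that the Lie algebra $\mf n_{P,L}$ does), which is exactly where the exponentiation statement in Proposition \ref{prop:homs-ok}(1) is needed. The only other thing to watch is a bookkeeping point: that the notion of highest weight from Section \ref{subsec:digression}, applied with $\mbf L_{\mbf P}$ in the role of $\mbf G$, is measured against the Borel $\mbf B \cap \mbf L_{\mbf P}$ and its unipotent radical $\mbf N \cap \mbf L_{\mbf P} \subset \mbf N$, so that $N$-invariance of $\bar e_\chi^+$ transfers to $(N\cap L_P)$-invariance and the highest weight really comes out to be $\chi$.
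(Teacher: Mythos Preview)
Your proof is correct and follows essentially the same approach as the paper's. The only cosmetic difference is the starting point: the paper defines $U$ as the smallest $P$-subrepresentation of $L(\chi)$ containing $\bar e_\chi^+$ and then invokes Proposition \ref{prop:homs-ok}(3) together with Lemma \ref{lemma:L(chi)inOP}(2) to identify it with the $\mrm U(\mf p_L)$-submodule generated by $\bar e_\chi^+$, whereas you begin with the $\mrm U(\mf p_L)$-submodule and use the same two results in the other direction to promote it to a $P$-stable subspace; the remaining steps (triviality of the $N_P$-action via exponentiation, factoring through $L_P$, $\mf l_P$-simplicity, highest weight $\chi$) are the same, and your verification of the highest weight is in fact more detailed than the paper's ``by construction''.
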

\begin{proof}
Let $\bar e_\chi^+$ be the highest weight vector in $L(\chi)$. Let $U$ be the smallest $P$-subrepresentation of $L(\chi)$ containing $\bar e_\chi^+$. We claim that $U$ is the representation we are seeking.

First, $U$ is finite-dimensional because $L(\chi)$ is in $\mc O^P$ (see Definition \ref{defi:OP}(1)). Second,  Proposition \ref{prop:homs-ok}(3) and Lemma \ref{lemma:L(chi)inOP}(2) imply that $U$ is also the $\mrm U(\mf p_L)$-submodule generated by $\bar e_{\chi}^+$. By Lemma \ref{lemma:L(chi)inOP}(3) we deduce that $U$ is irreducible as a module over $\mrm U(\mf p_L)$, the action factors through the quotient $\mrm U(\mf p_L) \twoheadrightarrow \mrm U(\mf l_{P,L})$ and $\mf t_L$ acts diagonalizably on $U$. In particular, $U$ is irreducible as a $P$-representation as well. Furthermore, since $\mrm U(\mf n_{P,L})$ acts by zero on $U$ we see that $N_P$ acts trivially on $U$ by Proposition \ref{prop:exponentiate-action}. Thus the action of $P$ on $U$ factors through $L_P$.  Hence $U$ is an $\mf l_P$-simple object in $\mathcal O^{L_P}$. Its highest weight is $\chi$ by construction, finishing the proof.
\end{proof}

\section{The adjunction formula}\label{sec:adj-formula}

Our adjunction formula relates certain locally analytic principle series with the Emerton--Jacquet functor of a locally analytic representation of the group $G$.

\begin{defi}\label{defi:vsa}
Let $V$ be an $L$-linear locally analytic representation of $G$.
\begin{enumerate}
\item $V$ is called very strongly admissible if $V$ is a locally analytic admissible representation of $G$ and there exists a continuous $L$-linear and $G$-equivariant injection $V \hookrightarrow B$ where $B$ is a continuous admissible representation of $G$ on an $L$-Banach space.
\item $V$ is called f-$\mf p$-acyclic if $\Ext_{\mf p_L}^1(U,V) = (0)$ for all finite-dimensional $L$-linear locally analytic representations $U$ of $L_P$.
\end{enumerate}
\end{defi}

\begin{exam}\label{example:acyclic}
The second part of Definition \ref{defi:vsa} may be a little ad hoc, so we give an example. (The ``f'' in the definition refers to finite, as in finite-dimensional).

Suppose that $B$ is an admissible $L$-Banach representation of $G$, and there exists a compact open subgroup $H$ such that $B|_{H} \simeq \mc C^0(H,L)^{\oplus r}$ for some integer $r \geq 1$. Then the locally analytic vectors $V = B_{\an}$ clearly satisfy Definition \ref{defi:vsa}(1), and we claim it is also satisfies Definition \ref{defi:vsa}(2) (compare also with the proof of \cite[Proposition 6.3.3]{br3}). Indeed, the action of $\mf p_L$ is unaffected by replacing $G$ by $H$ so it suffices to show $\Ext^1_{\mf p_L}(U,\mc C^{\an}(G,L)) = (0)$ for all finite-dimensional locally analytic representations $U$ of $L_P$, when $G$ is compact (and thus $P$ and $L_P$ are also compact).

We make three general observations. First, if $W$ is any locally analytic representation of $P$ then $C^{\an}(P,W) \simeq C^{\an}(P,W_{\operatorname{triv}})$ where $W_{\operatorname{triv}}$ is the underlying vector space of $W$ equipped with its trivial action. Indeed, the isomorphism is given by taking a locally analytic function $f: P \rightarrow W$ to the locally analytic function $f': P \rightarrow W_{\operatorname{triv}}$ given by $f'(g) = g^{-1}(f(g))$.  Second, since $G$ is compact we have a topological isomorphism $G \simeq P \times G/P$, compatible with the natural actions of $P$, and thus also
\begin{equation*}
C^{\an}(G,L) \simeq C^{\an}(P,L) \widehat{\otimes}_L C^{\an}(G/P,L),
\end{equation*}
(\cite[Lemma A.1 and Proposition A.2]{st-duality}) as locally analytic representations of $P$ with $P$ acting trivially on the second tensorand. Finally, if $U$ is finite-dimensional then we have a canonical isomorphism
\begin{equation*}
\Ext^1_{\mathfrak p_L}(U,C^{\an}(G,L)) \simeq \Ext^1_{\mathfrak p_L}(L,U^{\ast}\otimes_L C^{\an}(G,L)).
\end{equation*}
We now finish the example. We want to show that the left-hand space in the previous equation vanishes. The space on the right-hand side is the Lie algebra cohomology $H^{i}(\mathfrak p_L, M)$ in degree $i=1$, where $M$ is the $\mathfrak p_L$-module
\begin{equation*}
M = U^{\ast}\otimes_L C^{\an}(G,L) \simeq C^{\an}(P,U^{\ast})\widehat{\otimes}_L C^{\an}(G/P,L).
\end{equation*}
We owe the isomorphism to our second observation above. Our first observation now implies that we may assume that $U = L$ is trivial. Thus we have reduced to showing that $H^1(\mathfrak p_L, C^{\an}(G,L)) = (0)$. We will show this vanishing is true in degrees $i > 0$ in fact.

First consider the case where $P = G$. Recall (\cite[Section 3]{st-duality}) that the cohomology $H^{i}(\mathfrak p_L,C^{\an}(P,L))$ is computed as the cohomology of the complex
\begin{equation}\label{eqn:differential-exactness}
\dotsb \rightarrow \Hom_L(\wedge^{i-1}\mathfrak p_L, C^{\an}(P,L)) \rightarrow \Hom_L(\wedge^{i}\mathfrak p_L, C^{\an}(P,L)) \rightarrow \dotsb
\end{equation}
(with the standard differentials). By (the proof of) \cite[Proposition 3.1]{st-duality}, this sequence is exact and the differentials are strict morphisms. Thus, \eqref{eqn:differential-exactness} remains exact after taking $-\widehat{\otimes}_L C^{\an}(G/P,L)$ (see \cite[Lemma 4.13]{schraen} for example, or compare with the proof of \cite[Proposition 2.1.23]{em6}). On the other hand, since $\wedge^j \mathfrak p_L$ is finite-dimensional for each $j$, the completed tensor product of \eqref{eqn:differential-exactness} with $-\widehat{\otimes}_L C^{\an}(G/P,L)$ gives us an exact sequence
\begin{equation}\label{eqn:second-exactsequence}
\dotsb \rightarrow \Hom_L(\wedge^{i-1}\mathfrak p_L, C^{\an}(G,L)) \rightarrow \Hom_L(\wedge^{i}\mathfrak p_L, C^{\an}(G,L)) \rightarrow \dotsb.
\end{equation}
Moreover, since $\mathfrak p_L$ acts trivially on $C^{\an}(G/P,L)$ the differential induced from \eqref{eqn:differential-exactness} and extending scalars is the same as the natural differential on \eqref{eqn:second-exactsequence} that is used to compute $H^i(\mathfrak p_L, C^{\an}(G,L))$. Since \eqref{eqn:second-exactsequence} is exact, we have shown what we want and this completes the example.
\end{exam}

\begin{rema}\label{rema-smooth}
The vanishing of $\Ext^1_{\mf p_L}(U,V)$ in Definition \ref{defi:vsa}(2) also implies it for $U$ replaced by $U\otimes_L \Pi$ where $\Pi$ is a smooth representation of $L_P$ (because of how $\Ext$ commutes with arbitrary direct sums in the first coordinate).
\end{rema}

To a very strongly admissible locally analytic representation $V$ of $G$ (but also more generally), Emerton has associated a locally analytic representation $J_P(V)$ of $L_P$ which we call the Emerton--Jacquet module of $V$  \cite{em2,em3}.

By  \cite[Theorem 0.3]{em2}, $J_P(-)$ is characterized as being right adjoint to the functor $U \mapsto \mc C^{\sm}_c(N_P,U)$ on $L$-linear locally analytic representations of $L_P$ (on compact type spaces). If $N^0_P \subset N_P$ is a fixed compact open subgroup then, by definition, the space $J_P(V)$ is closely related to the space of invariants $V^{N_P^0}$ \cite[Definition 3.4.5]{em2}. In fact, on $V^{N_P^0}$ there is a canonical action of the monoid $L_P^+ = \{t \in L_P \mid tN_P^0t^{-1} \subset N_P^0\}$ by
\begin{equation}\label{eqn:hecke-action}
\pi_t\cdot v := {1 \over (N^0_P:tN^0_Pt^{-1})} \sum_{a \in N^0_P/tN^0_Pt^{-1}} at\cdot v = \delta_P(t)\sum_{a \in N_P^0/tN^0_Pt^{-1}} at \cdot v,
\end{equation}
for each $t \in L_P^+$ and $v \in V^{N_P^0}$ (here $\delta_P$ is the modulus character of $P$). If $Z(L_P)$ is the center of $L_P$ then $J_P(V)$ is the ``finite slope'' part of $V^{N_P^0}$ with respect to the monoid $Z(L_P)^+ = Z(L_P) \cap L_P^+$ and the action of $L_P^+$ extends to an action of all $L_P$.

When $V$ is very strongly admissible, the locally analytic action of the center $Z(L_P)$ on $J_P(V)$ extends to an action of the space $\mc O(\widehat{Z(L_P)})$ of rigid analytic functions on the $p$-adic rigid space $\widehat{Z(L_P)}$ parameterizing locally analytic characters of $Z(L_P)$ (this is part of the content of \cite[Theorem 0.5]{em2}). If $\eta \in \widehat{Z(L_P)}$ is a character then the set of functions vanishing at $\eta$ defines a maximal ideal $\mf m_{\eta} \subset \mc O(\widehat{Z(L_P)})$. We write $J_P(V)^{Z(L_P)=\eta}$ for the closed $L_P$-stable subspace which is annihilated by $\mf m_{\eta}$, equivalently the closed subspace of $J_P(V)$ on which $Z(L_P)$ acts through the character $\eta$, equivalently the closed subspace $V^{N^0_P,Z(L_P)^+ = \eta}$ (\cite[Proposition 3.4.9]{em2}).

We are now ready to give the key definition. 

\begin{defi}\label{defi:badness}
Suppose that 
\begin{itemize}
\item $U$ is an $\mf l_P$-simple object in $\mathcal O^{L_P}$ with  highest weight $\chi$,
\item $\pi$ is an $L$-linear (admissible) smooth $L_P$-representation of finite length admitting a central character $\omega_{\pi}$,\footnote{We include the adjective admissible for emphasis, but if $\pi$ is smooth and has finite length then $\pi$ is automatically admissible. The irreducible case was proved by Jacquet \cite[Th\'eor\`eme 1]{jacquet} and the finite length case follows because admissible smooth representations are closed under extension.} and
\item $V$ is an $L$-linear very strongly admissible locally analytic representation of $G$.
\end{itemize}
Then, we say the pair $(U,\pi)$ is non-critical with respect to $V$ if $J_P(V)^{Z(L_P) = \chi'\omega_{\pi}} = (0)$ for every locally analytic character $\chi'\neq \chi$ of $T$ which is strongly linked to $\chi$ and $d\chi' \in \Lambda_{\mf p_L}^+$.
\end{defi}
\begin{rema}\label{remark:we-wish}
One would like to only qualify over those characters $\chi'$ such that $L(\chi')$ appears as a constituent of the generalized Verma module $M_{\mf p}(U)$ but we were unable to complete the proof of Theorem \ref{theo:adj-theo} under that hypothesis. When $\mbf P = \mbf B$, which is currently the most important case for applications, there is no ambiguity by Proposition \ref{prop:verma-compositions}.
\end{rema}

\begin{rema}\label{rema:non-critical-slope}
Since $V$ is assumed to be very strongly admissible, \cite[Lemma 4.4.2]{em2} provides a sufficient condition for $(U,\pi)$ to be non-critical, independent of $V$. Indeed, it is sufficient that the character $\chi\omega_\pi$ be of {\em non-critical slope} in the sense of \cite[Definition 4.4.3]{em2} (at least when $\chi$ is assumed to be locally algebraic).\footnote{There is a typo in \cite[Lemma 4.4.2]{em2}. The slope (in the sense of \cite{em2}) of the modulus character is $-2\rho$, not $-\rho$ (here $\rho$ is the half-sum of positive roots, what we are denoting $\rho_0$). Thus, $\rho$ should be replaced by $2\rho$ in the statement of \cite[Lemma 4.4.2]{em2}. Despite this, \cite[Definition 4.4.3]{em2} should remain unchanged.}
\end{rema}

Condition (2) in Definition \ref{defi:badness} implies a more natural looking condition.
\begin{lemm}\label{lemm:gives_old_cond}
Suppose that $U$, $\pi$ and $V$ are a triple as in Definition \ref{defi:badness} and $(U,\pi)$ is non-critical with respect to $V$. If $\chi' \neq \chi$ is a locally analytic character of $T$, $\chi'$ is strongly linked to $\chi$ with $d\chi \in \Lambda_{\mf p_L}^+$ and $U_{\chi '}$ is an $L$-linear finite-dimensional $\mrm U(\mf l_{P,L})$-simple representation of $L_P$ with the highest weight $\chi '$ then  $\Hom_{L_P}(U_{\chi'}\otimes \pi, J_P(V)) = (0)$.
\end{lemm}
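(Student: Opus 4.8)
The plan is to deduce the statement from the non-criticality hypothesis of Definition~\ref{defi:badness} by working out how the center $Z(L_P)$ acts on $U_{\chi'}\otimes\pi$. The key point is that $Z(L_P)\subseteq T$, since $\mbf T$ is a maximal torus of the connected reductive group $\mbf L_{\mbf P}$ and the center of a connected reductive group lies in every maximal torus. Using this, I would first show that $Z(L_P)$ acts on $U_{\chi'}$ through the single locally analytic character $\chi'|_{Z(L_P)}$: a highest weight vector $v^+\in U_{\chi'}$ is a $T$-eigenvector with eigensystem $\chi'$, hence a $Z(L_P)$-eigenvector with eigensystem $\chi'|_{Z(L_P)}$; by Lemma~\ref{lemm:highest-wt-vectors}(1) applied with $\mbf L_{\mbf P}$ in the role of $\mbf G$, the $\mrm U(\mf l_{P,L})$-simple representation $U_{\chi'}$ is irreducible as an $L_P$-representation, so it is the $L$-span of $\{g v^+ : g\in L_P\}$, and since $Z(L_P)$ is central in $L_P$ each $gv^+$ is again a $Z(L_P)$-eigenvector with the same eigensystem. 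Therefore $Z(L_P)$ acts on $U_{\chi'}\otimes\pi$ through the character $\eta:=(\chi'|_{Z(L_P)})\,\omega_\pi$.

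Next I would argue by contradiction. Suppose $\phi: U_{\chi'}\otimes\pi\to J_P(V)$ is a nonzero continuous $L_P$-equivariant map. Its image is a nonzero $L_P$-subrepresentation of $J_P(V)$ on which $z$ acts by the scalar $\eta(z)$ for every $z\in Z(L_P)$; by the description of $J_P(V)^{Z(L_P)=\eta}$ recalled before Definition~\ref{defi:badness} (the subspace of $J_P(V)$ on which $Z(L_P)$ acts through $\eta$, cf.\ \cite[Proposition 3.4.9]{em2}), the image is contained in $J_P(V)^{Z(L_P)=\eta}$, whence $J_P(V)^{Z(L_P)=\eta}\neq(0)$. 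But $\chi'$ is exactly a character to which the non-criticality condition applies: $\chi'\neq\chi$, $\chi'$ is strongly linked to $\chi$, and $d\chi'\in\Lambda_{\mf p_L}^+$ --- the last because the $\mrm U(\mf l_{P,L})$-simple module $U_{\chi'}$ with highest weight $\chi'$ exists, cf.\ Section~\ref{subsec:verma-reminder} following \cite[Section 9.2]{hum}. Hence non-criticality of $(U,\pi)$ with respect to $V$ gives $J_P(V)^{Z(L_P)=\chi'\omega_\pi}=(0)$, a contradiction, so $\Hom_{L_P}(U_{\chi'}\otimes\pi,J_P(V))=(0)$.

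The main subtlety --- and it is a mild one --- is the claim that $Z(L_P)$ acts on $U_{\chi'}$ by an honest scalar character rather than by something only unipotent up to a scalar on a larger subspace; this is exactly what the $L_P$-irreducibility furnished by Lemma~\ref{lemm:highest-wt-vectors}(1), together with the centrality of $Z(L_P)$, settles. The rest of the argument is bookkeeping with the definition of $J_P(V)^{Z(L_P)=\eta}$ and of non-criticality.
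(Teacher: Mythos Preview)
Your argument is correct and is precisely the paper's approach, only spelled out in full: the paper's proof is the single line ``If $f \in \Hom_{L_P}(U_{\chi'}\otimes \pi, J_P(V))$ then the image of $f$ lies in $J_P(V)^{Z(L_P)=\chi'\omega_\pi} =(0)$,'' and your proposal unpacks exactly the implicit claims behind that sentence (that $Z(L_P)\subset T$ acts on $U_{\chi'}$ via $\chi'|_{Z(L_P)}$, hence on $U_{\chi'}\otimes\pi$ via $\chi'\omega_\pi$, so any $L_P$-map lands in the indicated eigenspace).
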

\begin{proof}
If $f \in \Hom_{L_P}(U_{\chi'}\otimes \pi, J_P(V))$ then the image of $f$ lies in $J_P(V)^{Z(L_P)=\chi'\omega_\pi} =(0)$.
\end{proof}
We now state our main theorem.

\begin{theo}\label{theo:adj-theo}
Suppose that 
\begin{itemize}
\item $V$ is an $L$-linear very strongly admissible, f-$\mf p$-acyclic, representation of $G$,
\item $U$ is an $L$-linear finite-dimensional $\mf l_P$-simple object in $\mathcal O^{L_P}$ and 
\item $\pi$ is a finite length smooth representation of $L_P$ admitting a central character. 
\end{itemize}
If the pair $(U,\pi)$ is non-critical with respect to $V$ then there exists a canonical isomorphism
\begin{equation*}
\Hom_G\left(\Ind_{P^-}^{G}(U \otimes \pi(\delta_P^{-1}))^{\an}, V\right) \simeq \Hom_{L_P}\left(U\otimes \pi, J_P(V)\right).
\end{equation*}
\end{theo}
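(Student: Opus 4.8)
The plan is to bootstrap the desired clean adjunction out of Emerton's balanced adjunction \eqref{eqn:new-adjunction}, using the composition-series results of Section~\ref{subsec:verma2} to absorb both the passage from the subrepresentation $I_{P^-}^G(-)$ to the full locally analytic induction and the removal of the balancedness condition; the non-critical hypothesis will be what makes both reductions possible, and the f-$\mf p$-acyclicity of $V$ will be used exactly where Emerton's functional-analytic machinery in \cite{em2,em3} requires a vanishing of $\Ext^1_{\mf p_L}(-,V)$. Write $U' := U\otimes\pi(\delta_P^{-1})$, so that $U'(\delta_P) = U\otimes\pi$. Since $V$ is very strongly admissible, \cite{em3} already provides a canonical isomorphism
\begin{equation*}
\Hom_G(I_{P^-}^G(U'), V) \overset{\simeq}{\longrightarrow} \Hom_{L_P}(U\otimes\pi, J_P(V))^{\bal}.
\end{equation*}
So two things remain: that the inclusion $I_{P^-}^G(U')\hookrightarrow\Ind_{P^-}^G(U')^{\an}$ induces an isomorphism on $\Hom_G(-,V)$, and that every $L_P$-morphism $U\otimes\pi\to J_P(V)$ is automatically balanced.

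\emph{Balancedness is automatic.} By Proposition~\ref{prop:poly-internal-dual} the representation $\Ind_{P^-}^G(U')^{\an}$ is realized, through the constructions of \cite{em3} (compare Orlik--Strauch \cite{os}), in terms of the internal dual $M_{\mf p_L}(U)^\vee$ together with the smooth datum $\pi$, with the subrepresentation $I_{P^-}^G(U')$ corresponding to the irreducible sub $L(U)\hookrightarrow M_{\mf p_L}(U)^\vee$ furnished by Proposition~\ref{prop:irred-objects}. The balanced subspace of $\Hom_{L_P}(U\otimes\pi,J_P(V))$ is cut out by one condition for each further constituent of $M_{\mf p_L}(U)^\vee$; by Theorem~\ref{theo:constituents}(1) those constituents are the $L(\chi')$ with $\chi'\neq\chi$ strongly linked to $\chi$ and $d\chi'\in\Lambda_{\mf p_L}^+$, and the associated condition forces the vanishing of a certain $L_P$-equivariant map out of $U_{\chi'}\otimes\pi'$ (where $U_{\chi'}$ is the $\mf l_P$-simple module of highest weight $\chi'$ and $\pi'$ a subquotient of $\pi$) into $J_P(V)$. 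Since $\pi'$ shares the central character $\omega_\pi$, such a map lands in $J_P(V)^{Z(L_P)=\chi'\omega_\pi}$, which is zero by Definition~\ref{defi:badness} (cf.\ Lemma~\ref{lemm:gives_old_cond}). Hence every element of $\Hom_{L_P}(U\otimes\pi,J_P(V))$ is balanced.

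\emph{Passing from $I_{P^-}^G(U')$ to $\Ind_{P^-}^G(U')^{\an}$.} Consider $0\to I_{P^-}^G(U')\to\Ind_{P^-}^G(U')^{\an}\to Q\to 0$. The same dictionary, applied to the quotient $M_{\mf p_L}(U)^\vee/L(U)$, shows $Q$ admits a finite filtration whose graded pieces $Q_j$ are the locally analytic representations attached to the constituents $L(\chi')$ ($\chi'\neq\chi$ strongly linked to $\chi$, $d\chi'\in\Lambda_{\mf p_L}^+$) and to subquotients of $\pi$. For each $Q_j$ the partial adjunction of the previous paragraph, applied with $L(\chi')$ in place of $L(U)$, embeds $\Hom_G(Q_j,V)$ into $\Hom_{L_P}(U_{\chi'}\otimes\pi',J_P(V))$, which vanishes by non-criticality as above; and since $V$ is f-$\mf p$-acyclic (together with Remark~\ref{rema-smooth}), the corresponding group $\Ext^1_G(Q_j,V)$ also vanishes, because Emerton's machinery expresses these groups through $\mf n_P$-(co)homology of $V$ tested against finite-dimensional $L_P$-representations, and f-$\mf p$-acyclicity kills the obstructing term. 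Dévissage along the filtration then gives $\Hom_G(Q,V)=0=\Ext^1_G(Q,V)$, so the restriction map $\Hom_G(\Ind_{P^-}^G(U')^{\an},V)\overset{\simeq}{\to}\Hom_G(I_{P^-}^G(U'),V)$ is an isomorphism.

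Composing this isomorphism with Emerton's balanced adjunction and with the equality $\Hom_{L_P}(U\otimes\pi,J_P(V))^{\bal}=\Hom_{L_P}(U\otimes\pi,J_P(V))$ from the middle paragraph produces the canonical isomorphism asserted in the theorem. I expect the genuine obstacle to be the second step: one has to pin down precisely the graded pieces of $\Ind_{P^-}^G(U')^{\an}/I_{P^-}^G(U')$ in terms of the composition series of $M_{\mf p_L}(U)^\vee$ (for which Section~\ref{sec:verma-mods-fd} is essential) \emph{and} establish the simultaneous vanishing of the $\Hom$ and $\Ext^1$ groups into $V$, the $\Ext^1$ half being exactly the point where f-$\mf p$-acyclicity of $V$ is indispensable; keeping careful track of the central characters $\chi'\omega_\pi$ throughout is what lets the non-critical hypothesis do its work.
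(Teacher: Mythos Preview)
Your strategy is genuinely different from the paper's: the paper never invokes Emerton's balanced adjunction \eqref{eqn:new-adjunction} at all, but instead builds the isomorphism directly by passing from $\Hom_G(\Ind_{P^-}^G(U')^{\an},V)$ to $\Hom_{(\mf g,P)}(\mc C_c^{\lp}(N_P,U'),V)$ (using \cite[Corollary~4.3.3]{em3}) and then working entirely in the world of $(\mf g,P)$-modules, where the problem becomes showing that the map $\alpha_U:M_{\mf p_L}(U)\to M_{\mf p_L}(U)^\vee$ induces an isomorphism after tensoring with $\mc C_c^{\sm}(N_P,\pi(\delta_P^{-1}))$ and applying $\Hom_{(\mf g,P)}(-,V)$.

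Your proposal has a real gap in the second step. You need $\Ext^1_G(Q_j,V)=0$ for the infinite-dimensional locally analytic $G$-representations $Q_j$, but f-$\mf p$-acyclicity only gives you $\Ext^1_{\mf p_L}(W,V)=0$ for \emph{finite-dimensional} $L_P$-representations $W$. The sentence ``Emerton's machinery expresses these groups through $\mf n_P$-(co)homology of $V$ tested against finite-dimensional $L_P$-representations'' is not a theorem that exists; there is no mechanism in \cite{em2,em3} that computes $\Ext^1_G$ of an Orlik--Strauch representation into $V$ in terms of $\Ext^1_{\mf p_L}$. (Even making sense of $\Ext^1_G$ in this non-abelian category requires care.) The paper avoids this entirely: it reduces to an extension problem for \emph{$P$-representations}, not $G$-representations, by a pushout/pullback construction (Proposition~\ref{prop:be}), and f-$\mf p$-acyclicity is used only to split a short exact sequence of $P$-modules with finite-dimensional-times-smooth quotient; the resulting $P$-splitting is then promoted to a $(\mf g,P)$-splitting by extending $\mrm U(\mf g_L)$-linearly (Theorem~\ref{theo:sur}).

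There are also two softer issues. First, your identification of Emerton's $I_{P^-}^G(U')$ with the representation attached to $L(U)\subset M_{\mf p_L}(U)^\vee$ is asserted but not justified; Emerton's construction of $I_{P^-}^G$ is not phrased in Orlik--Strauch terms, and making this precise is nontrivial. Second, your description of the balanced condition as being ``cut out by one condition for each further constituent of $M_{\mf p_L}(U)^\vee$'' does not match \cite[Definition~0.8]{em3}, which is a lifting condition involving $\mrm U(\mf g_L)\otimes_{\mrm U(\mf p_L^-)}U'$ and $C^{\pol}(N_P,U')$; translating it into the statement you want requires real work that you have not supplied.
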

A version of this theorem in the case that $\mbf P = \mbf B$ and $\mbf G = {\GL_3}_{/\mbf Q_p}$ (though that assumption was not crucially used) was proven in \cite[Theorem 4.1]{bcho}. The proof we give here is slightly different, inspired by \cite{be}. 

Throughout the rest of this section we fix $U,\pi$ and $V$ as in the statement of Theorem \ref{theo:adj-theo}. We also set $U' = U\otimes_L \pi(\delta_P^{-1})$, which is a locally analytic representation of $L_P$ when equipped with its finest convex topology (which is the same as the inductive tensor product topology since $U$ and $\pi(\delta_P^{-1})$ are each respectively equipped with their finest convex topology.)

Let $\mc C_c^{\lp}(N_P,L)$ be the space of compactly supported $L$-valued locally polynomial functions on $N_P$. If $W$ is any locally analytic representation of $L_P$ then we denote by $\mc C_c^{\lp}(N_P,W)$ the $W$-valued locally polynomial functions $\mc C_c^{\lp}(N_P,L)\otimes_L W$ as in \cite[Definition 2.5.21]{em3}. Because of the natural open immersion $N_P \hookrightarrow G / P^-$, we can regard $\mc{C}_c^{\lp}(N_P, U' )$ as a $(\mf{g}, P)$-stable subspace of $\Ind _{P^-}^{G} (U' )^{\an}$. If $\mc C_c^{\sm}(N_P,U')$ denotes the space of smooth compactly supported functions then we have a $P$-equivariant map $\mc C_c^{\sm}(N_P,U') \hookrightarrow \mc{C}_c^{\lp}(N_P, U')$ which is a closed embedding (see \cite[Section 2.8]{em3}). We note now that since $U'$ is equipped with its finest convex topology, so is $\mc C_c^{\sm}(N_P,U')$ by definition (see \cite[Section 3.5]{em2}). The same is true if we replace $U'$ by just $\pi(\delta_P^{-1})$, and the same is true if we consider the locally analytic $(\mf g,P)$-module $\mrm U(\mf g _L)\otimes_{\mrm U(\mf p _L)} \mc C_c^{\sm}(N_P,U')$.

We consider the following diagram (which is commutative, as we will explain):
\begin{equation*}\label{eqn:big_diagram}
\renewcommand{\labelstyle}{\textstyle}
\xymatrix@R=3pc@C=1pc{
\Hom _{G} (\Ind _{P^{-}} ^{G} (U')^{\an}, V) \ar[r]^-{(1)} \ar[d]^-{\simeq}_-{\text{(a)}}  &  \Hom _{L_P} (U\otimes_L \pi, J_{P}(V) )\ar[d]_{\simeq}^{\text{(b)}} \\
\Hom _{(\mf{g},P)}(\mc C_c^{\lp}(N_P,U'), V) \ar[r] ^-{(2)} \ar[d]^-{\simeq}_-{\text{(c)}} & \Hom _{(\mf{g},P)}(\mrm U(\mf g _L)\otimes_{\mrm U(\mf p _L)} \mc C_c ^{\sm}(N_P,U'), V) \ar[d]^{\text{(d)}}_-{\simeq} \\
\Hom _{(\mf{g},P)}(M_{\mf p _L}(U)^{\vee} \otimes \mc{C} _c ^{\sm}(N_P, \pi(\delta_P^{-1})), V) \ar[r]^{(3)} \ar[d]_-{\text{(3a)}} & \Hom _{(\mf{g},P)}(M_{\mf p _L}(U) \otimes_L \mc{C}_c ^{\sm}(N_P,\pi(\delta_P^{-1})),V) \\
\Hom _{(\mf{g},P)}(L(U) \otimes \mc{C}_c ^{\sm}(N_P,\pi(\delta_P^{-1})), V) \ar[ur]_-{(3\text{b})}
}
\end{equation*}
The statement of Theorem \ref{theo:adj-theo} is that the map (1) is an isomorphism. Let us now explain all the identifications and maps.

The map (1) is obtained by applying the Emerton--Jacquet functor, together with the canonical $L_P$-equivariant embedding $U\otimes \pi \hookrightarrow J_P(\Ind_{P^-}^G(U')^{\an})$  given in \cite[Lemma 0.3]{em3}. The map (a) is the natural restriction morphism (it is an isomorphism by \cite[Corollary 4.3.3]{em3}). The map (2) is induced from the canonical $P$-equivariant inclusion $\mc C^{\sm}_c(N_P,U') \hookrightarrow \mc C^{\lp}_c(N_P,U')$, and the map (b) is (the inverse of) the adjunction theorem/definition of the Emerton--Jacquet functor (see  \cite[Theorem 3.5.6]{em2} and \cite[(0.17)]{em3}).  The upper square commutes by construction---the canonical inclusion $U\otimes \pi \hookrightarrow J_P(\Ind_{P^-}^G(U')^{\an})$ is induced from the equality $U\otimes \pi \simeq J_P(\mc C_c^{\sm}(N_P,U'))$ (\cite[Lemma 3.5.2]{em2}) together with applying the Emerton--Jacquet functor to the canonical inclusion $\mc C_c^{\sm}(N_P,U') \hookrightarrow \Ind_{P^-}^G(U')^{\an}$ (see the proof of \cite[Lemma 0.3]{em3} in \cite[Section 2.8]{em3}).

The map (d) is the $(\mf g,P)$-equivariant identification
\begin{align*}
\mrm U(\mf g _L) \otimes_{\mrm U(\mf p _L)} \mc C_c^{\sm}(N_P,U') &= \mrm U(\mf g _L)\otimes_{\mrm U(\mf p _L)} \left(U\otimes_L \mc C_c^{\sm}(N_P,\pi(\delta_P^{-1}))\right) \\
&= M_{\mf p _L}(U)\otimes_L \mc C_c^{\sm}(N_P,\pi(\delta_P^{-1})),
\end{align*}
which is a definition at each step. The map (c) is the identification $\mc C_c^{\lp}(N_P,U') \simeq \mc{C}^{\pol}(N_P,U) \otimes_L \mc{C}_c ^{\sm}(N_P,\pi(\delta_P^{-1}))$ together with Proposition \ref{prop:poly-internal-dual}. The map (3) is induced from the map
\begin{equation*}
\alpha _{U}: M_{\mf p _L}(U) \ra M_{\mf p _L}(U) ^{\vee}
\end{equation*}
defined in Section \ref{subs:internal}. The commutation of the middle square follows from the construction of $\alpha_U$ via \eqref{eqn:evaluate}. Finally, by Proposition \ref{prop:irred-objects}, $\alpha_U$ factors  as $M_{\mf p _L}(U) \twoheadrightarrow L(U) \hookrightarrow M_{\mf p _L}(U)^{\vee}$, which gives the maps (3a) and (3b).

We will now prove Theorem \ref{theo:adj-theo} by showing that the map (3) is an isomorphism. In turn we will show separately that the maps (3a) and (3b) are isomorphisms.
\begin{prop}\label{prop:iso-and-inj}
The map (3b) is an isomorphism and (3a) is injective.
\end{prop}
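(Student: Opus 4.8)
The plan is to reduce both assertions to a single vanishing statement and to prove that statement by dévissage along composition series in $\mc O^P$. Recall from Proposition~\ref{prop:irred-objects} that $\alpha_U$ factors as $M_{\mf p_L}(U)\twoheadrightarrow L(U)\hookrightarrow M_{\mf p_L}(U)^\vee$; put $K:=\ker\bigl(M_{\mf p_L}(U)\twoheadrightarrow L(U)\bigr)$ and $Q:=M_{\mf p_L}(U)^\vee/L(U)$, which are again objects of $\mc O^P$ (the category is closed under subobjects and quotients, the $(\mf g,P)$-structure on a $(\mf g,T)$-stable subspace being automatic by Proposition~\ref{prop:homs-ok}(3)). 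Write $\mc C:=\mc C_c^{\sm}(N_P,\pi(\delta_P^{-1}))$, and note that every module appearing below carries its finest convex topology, so all morphism spaces are just algebraic $(\mf g,P)$-morphisms and $\Hom_{(\mf g,P)}(-,V)$ is left exact and contravariant on them. Tensoring the short exact sequences $0\to K\to M_{\mf p_L}(U)\to L(U)\to 0$ and $0\to L(U)\to M_{\mf p_L}(U)^\vee\to Q\to 0$ over $L$ with $\mc C$ and applying $\Hom_{(\mf g,P)}(-,V)$, one sees that (3b) is injective unconditionally, that its cokernel embeds into $\Hom_{(\mf g,P)}(K\otimes_L\mc C,V)$, and that $\ker(3\text{a})=\Hom_{(\mf g,P)}(Q\otimes_L\mc C,V)$. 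It therefore suffices to prove
\begin{equation*}
\Hom_{(\mf g,P)}(K\otimes_L\mc C,V)=(0)=\Hom_{(\mf g,P)}(Q\otimes_L\mc C,V).
\end{equation*}

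For the dévissage: by Proposition~\ref{prop:irred-objects}(3), $M_{\mf p_L}(U)$ and $M_{\mf p_L}(U)^\vee$ have finite length in $\mc O^P$ and $L(U)=L(\chi)$ is a composition factor of multiplicity one in each, so $K$ and $Q$ have finite length and $L(\chi)$ does not occur among their composition factors. By Theorem~\ref{theo:constituents}, every composition factor of $K$ or of $Q$ is of the form $L(\chi')$ with $\chi'\neq\chi$, $\chi'$ strongly linked to $\chi$, and $d\chi'\in\Lambda_{\mf p_L}^+$. Since $-\otimes_L\mc C$ is exact and $\Hom_{(\mf g,P)}(-,V)$ is left exact, induction along a composition series reduces the displayed vanishing to proving, for each such $\chi'$, that $\Hom_{(\mf g,P)}(L(\chi')\otimes_L\mc C,V)=(0)$.

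So fix $\chi'$ as in the previous paragraph. Each such $L(\chi')$ is a simple object of $\mc O^P$, so by Proposition~\ref{prop:existence-findimirred} there is an $\mf l_P$-simple object $U_{\chi'}\in\mc O^{L_P}$ with highest weight $\chi'$, and then $L(\chi')\simeq L(U_{\chi'})$ is a quotient of $M_{\mf p_L}(U_{\chi'})$ in $\mc O^P$ (Proposition~\ref{prop:irred-objects}(2) together with Proposition~\ref{prop:verma-facts}(3)). Left exactness gives an injection
\begin{equation*}
\Hom_{(\mf g,P)}(L(\chi')\otimes_L\mc C,V)\hookrightarrow\Hom_{(\mf g,P)}\bigl(M_{\mf p_L}(U_{\chi'})\otimes_L\mc C,\,V\bigr).
\end{equation*}
Exactly as for the identification (d) in the diagram, applied with $U_{\chi'}$ in place of $U$, there is a $(\mf g,P)$-equivariant isomorphism $M_{\mf p_L}(U_{\chi'})\otimes_L\mc C\simeq\mrm U(\mf g_L)\otimes_{\mrm U(\mf p_L)}\mc C_c^{\sm}(N_P,U_{\chi'}\otimes\pi(\delta_P^{-1}))$, and Emerton's adjunction --- the isomorphism (b) of the diagram, valid since $V$ is very strongly admissible --- identifies the right-hand side with $\Hom_{L_P}(U_{\chi'}\otimes\pi,J_P(V))$. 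The latter vanishes by Lemma~\ref{lemm:gives_old_cond}: $(U,\pi)$ is non-critical with respect to $V$, and $\chi'\neq\chi$ is strongly linked to $\chi$ with $d\chi'\in\Lambda_{\mf p_L}^+$, so the image of any such morphism lies in $J_P(V)^{Z(L_P)=\chi'\omega_\pi}=(0)$ (Definition~\ref{defi:badness}). Hence $\Hom_{(\mf g,P)}(L(\chi')\otimes_L\mc C,V)=(0)$, as needed.

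The homological part of this is formal; the real content is elsewhere. First, it rests on the precise determination of the composition factors of $M_{\mf p_L}(U)$ and $M_{\mf p_L}(U)^\vee$ (Theorem~\ref{theo:constituents}, hence all of Section~\ref{subsec:verma2}), which is exactly what lets the non-critical hypothesis enter. Second, one must still check the compatible identification $M_{\mf p_L}(U_{\chi'})\otimes_L\mc C\simeq\mrm U(\mf g_L)\otimes_{\mrm U(\mf p_L)}\mc C_c^{\sm}(N_P,U_{\chi'}\otimes\pi(\delta_P^{-1}))$ and that Emerton's adjunction applies in this generality --- both of which are, verbatim, steps (b) and (d) of the diagram with $U$ replaced by $U_{\chi'}$, so no new difficulty arises there. (Surjectivity of (3a), by contrast, would additionally require the f-$\mf p$-acyclicity of $V$ and is not part of this proposition.)
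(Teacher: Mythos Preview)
Your proof is correct and follows essentially the same approach as the paper's own proof: reduce to the vanishing of $\Hom_{(\mf g,P)}(L(\chi')\otimes_L\mc C,V)$ for the ``bad'' constituents $\chi'\neq\chi$ by d\'evissage along a composition series (Theorem~\ref{theo:constituents} and Proposition~\ref{prop:irred-objects}(3)), then embed into $\Hom_{(\mf g,P)}(M_{\mf p_L}(U_{\chi'})\otimes_L\mc C,V)$ via Proposition~\ref{prop:existence-findimirred}, and kill the latter by Emerton's adjunction combined with Lemma~\ref{lemm:gives_old_cond}. The paper phrases the cokernel side via $(M')^\vee$ rather than your $Q$, but since $L(U)$ is self-dual these are the same object, so there is no substantive difference.
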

\begin{proof}
Let $M' = \ker(M_{\mf p _L}(U) \ra L(U))$. $M'$ lies in the category $\mc O^P$. Since $M'$ is a subobject of $M_{\mf p_L}(U)$, $M'$ has a composition series in $\mc O^P$ whose factors are $L(\chi')$ with $\chi'$ strongly linked to $\chi$ and $d\chi' \in \Lambda_{\mf p_L}^+$ (Theorem \ref{theo:constituents}) and for each $\chi'$ there exists a $\mf l_{P,L}$-simple locally analytic representation $U_{\chi'}$ of highest weight $\chi'$ (Proposition \ref{prop:existence-findimirred}).

By Lemma \ref{lemm:gives_old_cond}, $\Hom_{L_P}(U_{\chi'}\otimes_L \pi, J_P(V)) = (0)$ and by adjointness of $J_P(-)$ we deduce $\Hom_{P}(\mc C_c^{\sm}(N_P,U_{\chi'}\otimes_L \pi(\delta_P^{-1})),V) = (0)$. Thus
\begin{multline*}
\Hom_{(\mf g,P)}(L(\chi')\otimes_L \mc C_c^{\sm}(N_P,\pi(\delta_P^{-1})),V)\\ \subset \Hom_{(\mf g,P)}(M_{\mf p_L}(U_{\chi'})\otimes_L \mc C_c^{\sm}(N_P,\pi(\delta_P^{-1})),V)  = (0).
\end{multline*}
Since $L(\chi')$ is an arbitrary irreducible constituent of $M'$ in $\mc O_P$ we deduce $\Hom_{(\mf g,P)}(M''\otimes_L \mc C^{\sm}_c(N_P,\pi(\delta_P^{-1})),V) = (0)$ for $M'' = M'$ or $M'' = (M')^\vee$ (Proposition \ref{prop:irred-objects}(3)). Thus the proposition follows from the exactness properties of $\Hom$.
\end{proof}

The rest of this section is devoted to showing that the map (3a) is surjective. Our method is directly inspired by the proof of \cite[Section 5.6]{be}. Recall that we have fixed our very strongly admissible representation $V$ and our pair $(U,\pi)$ which is non-critical with respect to $V$, and $U$ has the highest weight $\chi$.

\begin{prop}\label{prop:be}
Let $\chi'$ be a locally analytic character of $T$ and let $U_{\chi'}$ be an $L$-linear finite-dimensional $\mrm U(\mf l_{P,L})$-simple representation of $L_P$ with the highest weight $\chi '$. Suppose that $\tilde E$ is an $L$-linear locally analytic representation of $P$ and
\begin{equation}\label{eqn:need-split}
0 \ra V \overset{\alpha}{\longrightarrow} \tilde{E} \overset{\beta}{\longrightarrow} U_{\chi'} \otimes \mc{C} _c ^{\sm}(N_P, \pi(\delta _P ^{-1})) \ra 0
\end{equation}
is an exact sequence of locally analytic representations of $P$. If $\chi'\neq \chi$ is strongly linked to $\chi$ and $d\chi \in \Lambda_{\mf p_L}^+$ then the sequence is split as locally analytic representations of $P$.
\end{prop}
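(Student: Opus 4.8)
The plan is to follow the strategy of \cite[Section 5.6]{be}: first split \eqref{eqn:need-split} over the Lie algebra $\mf p_L$ using f-$\mf p$-acyclicity, and then correct the section to be $P$-equivariant, the only surviving obstruction being governed by $J_P(V)$ and annihilated by the non-critical hypothesis. For the first step, observe that $\mc C_c^{\sm}(N_P,\pi(\delta_P^{-1}))$ is a \emph{smooth} representation of $P$, so $\mf p_L$ acts trivially on it; consequently $U_{\chi'}\otimes\mc C_c^{\sm}(N_P,\pi(\delta_P^{-1}))$ is, as a module over $\mf p_L$, a direct sum of copies of $U_{\chi'}$. Since $\Ext^1_{\mf p_L}(-,V)$ sends direct sums in its first argument to products (Remark \ref{rema-smooth}) and $V$ is f-$\mf p$-acyclic, the group $\Ext^1_{\mf p_L}(U_{\chi'}\otimes\mc C_c^{\sm}(N_P,\pi(\delta_P^{-1})),V)$ vanishes; as the underlying sequence of topological vector spaces already splits (everything is of compact type and $V$ is closed in $\tilde E$), this upgrades to a continuous, $L$-linear, $\mf p_L$-equivariant section $s$ of $\beta$.

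Next I would measure the defect $c(g):=g\circ s\circ g^{-1}-s$ for $g\in P$. Because $\beta$ is $P$-equivariant and $\beta\circ s=\mathrm{id}$, each $c(g)$ has image in $\ker\beta\cong V$, so $c$ is a continuous $1$-cocycle on $P$ with values in the space $W$ of continuous $L$-linear maps $U_{\chi'}\otimes\mc C_c^{\sm}(N_P,\pi(\delta_P^{-1}))\to V$ (with the conjugation action of $P$). Its derived cocycle vanishes since $s$ is $\mf p_L$-equivariant, so $c$ is locally constant, and the cocycle identity forces it to land in the subspace $W_0\subseteq W$ of $\mf p_L$-equivariant maps; it now suffices to prove $[c]=0$ in $H^1_{\sm}(P,W_0)$. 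To kill the $N_P$-direction, note that by definition $U_{\chi'}\otimes\mc C_c^{\sm}(N_P,\pi(\delta_P^{-1}))=\mc C_c^{\sm}(N_P,(U_{\chi'}\otimes\pi)(\delta_P^{-1}))$, which as a smooth representation of $N_P$ is compactly induced from the trivial subgroup and therefore acyclic for $\Hom(-,V)$; hence $H^{>0}_{\sm}(N_P,W_0)=(0)$, and after subtracting the resulting coboundary — still $\mf p_L$-equivariant, as smooth vectors for the conjugation action are automatically $\mf p_L$-equivariant — we may assume $s$ is $N_P$-equivariant.

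The remaining, and hardest, point is to show that the residual cocycle $c|_{L_P}$ is a coboundary. Here I would invoke Emerton's adjunction $\Hom_P(\mc C_c^{\sm}(N_P,W),V)\simeq\Hom_{L_P}(W,J_P(V))$ (\cite[Theorem 3.5.6]{em2}; compare the middle square of the diagram preceding Theorem \ref{theo:adj-theo}), applied with $W=(U_{\chi'}\otimes\pi)(\delta_P^{-1})$, the twist by $\delta_P^{-1}$ being exactly what turns the Jacquet side into $\Hom_{L_P}(U_{\chi'}\otimes\pi,J_P(V))$. Refined $L_P$-equivariantly, this identifies the space carrying $c|_{L_P}$, together with the relevant part of $H^1_{\sm}(L_P,-)$, with data assembled from $\Hom_{L_P}(U_{\chi'}\otimes\pi,J_P(V))$ and $\Ext^1_{L_P}(U_{\chi'}\otimes\pi,J_P(V))$. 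Since $U_{\chi'}$ is $\mrm U(\mf l_{P,L})$-irreducible with highest weight $\chi'$, the centre $Z(L_P)$ acts on $U_{\chi'}\otimes\pi$ through the single character $\chi'\omega_\pi$; as $\chi'\neq\chi$ is strongly linked to $\chi$ with $d\chi'\in\Lambda_{\mf p_L}^+$, non-criticality gives $J_P(V)^{Z(L_P)=\chi'\omega_\pi}=(0)$ (cf. Lemma \ref{lemm:gives_old_cond}), and the essential admissibility of $J_P(V)$ — i.e. the coherent $\mc O(\widehat{Z(L_P)})$-module structure, which promotes the vanishing of the $\chi'\omega_\pi$-eigenspace to the vanishing of the full generalized eigenspace — then forces both the $\Hom$- and the $\Ext^1$-group above to vanish. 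Hence $[c|_{L_P}]=0$, and combining with the previous step gives a $P$-equivariant section, as required.

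I expect the main obstacle to be exactly this last step: converting the smooth $L_P$-cohomology obstruction into $\Hom$- and $\Ext^1$-groups valued in $J_P(V)$ with the correct modulus twist, and justifying — via the coherence of $J_P(V)$ over $\mc O(\widehat{Z(L_P)})$ — the passage from the honest $\chi'\omega_\pi$-eigenspace to the generalized one. Everything before that is routine $p$-adic functional analysis and homological algebra once the bookkeeping is in place.
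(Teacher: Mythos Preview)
Your approach diverges from the paper's after the common first step. Having obtained the $\mf p_L$-section, the paper does \emph{not} run a cocycle argument. It integrates the $\mf p_L$-section to a $P^0$-section for a compact open $P^0\subset P$, then twists the whole sequence by $U_{\chi'}^{\ast}$ and pulls back along the trace inclusion $L\hookrightarrow\End_L(U_{\chi'})$ to obtain an auxiliary extension
\[
0 \to U_{\chi'}^{\ast}\otimes_L V \to \tilde E' \to \mc C_c^{\sm}(N_P,\pi(\delta_P^{-1})) \to 0
\]
with no $U_{\chi'}$ on the right. Taking $N_P^0$-invariants (exact, by the $P^0$-splitting) and then the $Z(L_P)^+=\omega_\pi$ eigenspace, the paper proves directly that $(\tilde E')^{N_P^0,Z(L_P)^+=\omega_\pi}\simeq\pi$: non-criticality kills the contribution of $U_{\chi'}^{\ast}\otimes V^{N_P^0}$, and the passage from honest to generalized $Z(L_P)^+$-eigenspaces is handled by showing that some $\pi_z$ acts \emph{compactly} on $(U_{\chi'}^{\ast}\otimes V^{N_P^0})^H$ for each compact open $H\subset L_P^0$ (this is where very strong admissibility is used), combined with the finite-dimensionality of $\pi^H$. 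The resulting $L_P$-embedding $\pi\hookrightarrow J_P(\tilde E')$ gives, by Emerton's adjunction, a $P$-equivariant section of the auxiliary sequence, and hence of the original one.

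Your sketch has two genuine gaps in its final step. First, the claim that Emerton's adjunction, ``refined $L_P$-equivariantly'', identifies the smooth cohomology group $H^1_{\sm}(L_P,W_0)$ with something built from $\Ext^1_{L_P}(U_{\chi'}\otimes\pi,J_P(V))$ is unsubstantiated: \cite[Theorem~3.5.6]{em2} is an isomorphism of $\Hom$-sets, not a derived statement, and $J_P$ is not exact, so there is no formal reason for such an identification to exist. Second, the assertion that essential admissibility of $J_P(V)$ over $\mc O(\widehat{Z(L_P)})$ promotes vanishing of the $\chi'\omega_\pi$-eigenspace to vanishing of the full generalized eigenspace is not correct as stated, and is precisely the point the paper does \emph{not} try to prove abstractly; instead it reduces to finite-dimensional $H$-invariants where a compact operator makes the generalized/honest eigenspace comparison elementary. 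Without a concrete substitute for this step your $\Ext^1$-vanishing, and hence the triviality of $[c|_{L_P}]$, does not follow.
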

\begin{proof}
The tensor product $U_{\chi'}\otimes_L \mc C_c^{\sm}(N_P,\pi(\delta_{P}^{-1})$ is equipped with its finest convex topology (see the discussion following the statement of Theorem \ref{theo:adj-theo}). Thus \eqref{eqn:need-split} is automatically split in the category of topological spaces. Thus $\tilde E$ is of compact-type (as $V$ is).

On the other hand, since $V$ is f-$\mf{p}$-acyclic, the sequence is split (continuously) as $\mf{p}$-modules (cf.\ Remark \ref{rema-smooth}). If $P^0 \subset P$ is a compact open subgroup then we may integrate a $\mf p$-equivariant splitting to obtain a $P^0$-equivariant splitting.

Let us now twist $\tilde{E}$ by the dual representation $U_{\chi'}^{\ast}$ and define $\tilde{E} '$ as a pullback

\begin{equation*}
\renewcommand{\labelstyle}{\textstyle}
\xymatrix@R=3pc@C=1pc{
0 \ar[r] & U_{\chi'} ^{*} \otimes _L V \ar[r] \ar@{=}[d] & \tilde{E}' \ar[r] \ar@{^{(}->}[d] & L \otimes _L \mc{C} _c ^{\sm}(N_P, \pi(\delta _P ^{-1}))   \ar@{^{(}->}[d] \ar[r] & 0     \\ 
0 \ar[r] & U_{\chi'} ^{*} \otimes _L V  \ar[r] & U_{\chi'} ^{*} \otimes _L \tilde{E} \ar[r]  & \textrm{End} _L(U_{\chi'}) \otimes _L \mc{C} _c ^{\sm}(N_P, \pi(\delta _P ^{-1})) \ar[r] & 0
}
\end{equation*}
so that $\tilde{E}'$ lies in the short exact sequence
\begin{equation}\label{eqn:tilde}
0 \ra U_{\chi'} ^{*} \otimes _L V \ra \tilde{E}' \ra  \mc{C} _c ^{\sm}(N_P, \pi(\delta _P ^{-1}))  \ra 0.  
\end{equation}
By construction it is still a locally analytic representation of $P$ of compact type and the exact sequence (\ref{eqn:tilde}) is split as $P^0$-representations. Moreover, \eqref{eqn:tilde} is split as a sequence of $P$-representations if and only if \eqref{eqn:need-split} is split (because we can reverse the construction and recover $\tilde E$ as a pushout of $\tilde E'$). We now shift our attention to $\tilde{E}'$.

Note that $U_{\chi'}$ is an inflated representation of $L_P$, so $N_P^0=N_P\cap P^0$ acts trivially on $U_{\chi} ^{*}$. Taking $N_P ^0$-invariants in \eqref{eqn:tilde}, the sequence remains exact because \eqref{eqn:tilde} is split as $P^0$-representations. Thus we get an exact sequence
\begin{equation}\label{eqn:tilde2}
0 \ra U_{\chi'} ^{*} \otimes _L V ^{N_P ^0} \ra (\tilde{E}') ^{N_P ^0}\ra  \mc{C} _c ^{\sm}(N_P, \pi(\delta _P ^{-1}))^{N_P ^0}    \ra 0
\end{equation}
of compact-type spaces equipped with continuous actions of the monoid $L_P^+$. By \cite[Proposition 3.4.9]{em2}  and \cite[Lemma 3.5.2]{em2} we have
\begin{equation*}
\mc{C} _c ^{\sm}(N_P, \pi(\delta _P ^{-1}))^{N_P ^0, Z(L_P)^+=\omega _{\pi}} = J_P(\mc{C} _c ^{\sm}(N_P, \pi(\delta _P ^{-1})))^{Z(L_P)=\omega _{\pi}} = \pi.
\end{equation*}
Since $(U,\pi)$ is non-critical with respect to $V$ and $\chi'\neq \chi$ is strongly linked to $\chi$ with $d\chi' \in \Lambda_{\mf p_L}^+$ we also know that $(U_{\chi'}^{\ast}\otimes_L V^{N^0_P})^{Z(L_P)^+=\omega_\pi} = (0)$. Since taking eigenspaces is left exact we deduce
\begin{equation}\label{eqn:want-iso-pi}
(\tilde{E}')^{N^0_P,Z(L_P)^+=\omega_\pi} \hookrightarrow \pi.
\end{equation}
We equip the left-hand side of \eqref{eqn:want-iso-pi} with an action of $Z(L_P)$ via $\omega_\pi$. This is compatible with the action of $L_P^+$ in that it evidently agrees on the intersection $Z(L_P)^+ = Z(L_P) \cap L_P^+$. Thus we get an action of $L_P$ on the left-hand side of \eqref{eqn:want-iso-pi} (see \cite[Proposition 3.3.6]{em2} and compare with \cite[Proposition 3.4.9]{em2}) and \eqref{eqn:want-iso-pi} is equivariant for the $L_P$-actions. We claim that it is an isomorphism. Granting that, we deduce an $L_P$-equivariant inclusion $\pi \hookrightarrow J_P(\tilde{E}')$ and by adjunction this gives a $P$-equivariant morphism $\mc C^{\sm}_c(N_P,\pi(\delta_P^{-1})) \rightarrow \tilde E'$ which one easily checks is a section to the quotient in \eqref{eqn:tilde}.

Thus we have reduced to showing \eqref{eqn:want-iso-pi} is an isomorphism, and we know that it is injective. Since $\pi$ is smooth, it suffices to show $\pi^H$ is in the image of \eqref{eqn:want-iso-pi} for all compact open $H \subset L_P$. If $H' \subset H$ then $\pi^H \subset \pi^{H'}$ and so without loss of generality we may suppose that $H \subset L_P^0 \subset L_P^+$. 

The final term of \eqref{eqn:tilde2} canonically contains $\pi$ (as we saw above). Write $F \subset (\tilde E')^{N^0_P}$ for the preimage of $\pi$ so $F$ surjects onto $\pi$. Since $H \subset P^0$, the sequence \eqref{eqn:tilde2} induces an exact sequence
\begin{equation}\label{eqn:Hsequence}
0 \ra \left(U_{\chi'}\otimes_L V^{N^0_P}\right)^H \ra F^H \ra \mc \pi^H \ra 0.
\end{equation}
The terms in sequence \eqref{eqn:Hsequence} are equipped with continuous actions of $Z(L_P)^+$. 

Since $V$ is an admissible locally analytic representation of $G$, there exists a $z \in Z(L_P)^+$ so that the operator $\pi_z$ acts on $(U_{\chi'}\otimes_L V^{N^0_P})^H$ compactly (this is contained in the proofs of \cite[Proposition 4.2.33]{em2} or \cite[Theorem 4.10]{hl}, namely the parts of those proofs which do not mention the $J_P(V)$; compare with the proof of  \cite[Proposition 5.5.4]{be}). On the other hand, $\pi^H$ is finite-dimensional since $\pi$ is admissible. Thus $\pi_z$ also acts compactly on $F^H$. We deduce that taking generalized $Z(L_P)^+$-eigenspaces on \eqref{eqn:Hsequence} is exact, and that generalized $Z(L_P)^+$-eigenspaces vanish if and only if bona fide $Z(L_P)^+$-eigenspaces vanish.

But now the $Z(L_P)^+$-eigenspace for $\omega_\pi$ is trivial on the left-hand side of \eqref{eqn:Hsequence} and thus $(F^H)^{(Z(L_P)^+=\omega_\pi)} \simeq (\pi^H)^{(Z(L_P)^+=\omega_\pi)}$ (the parentheses indicate generalized eigenspaces). Finally, $Z(L_P)^+$ acts on all $\pi^H$ by $\omega_\pi$ and so we deduce $F^{H,Z(L_P)^+=\omega_\pi} \simeq \pi^H$. But $F \subset (\tilde{E}')^{N^0_P}$ and so we have shown that $\pi^H$ is contained in the image of \eqref{eqn:want-iso-pi} which completes the proof of the claim.
\end{proof}

\begin{theo}\label{theo:sur}
Suppose $\chi'\neq \chi$ is a locally analytic character of $T$ which is strongly linked to $\chi$, and $M \subset M'$ are two objects of $\mc{O}^P$ such that $M' / M \simeq L(\chi')$. Then the natural map 
\begin{equation*}
\Hom _{(\mf{g},P)}(M' \otimes _L \mc{C} _c ^{\sm}(N_P, \pi(\delta _P ^{-1})), V) \ra \Hom _{(\mf{g},P)}(M \otimes _L \mc{C} _c ^{\sm}(N_P, \pi(\delta _P ^{-1})), V)
\end{equation*}
is surjective.
\end{theo}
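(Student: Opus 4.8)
Write $\mc C := \mc C_c^{\sm}(N_P,\pi(\delta_P^{-1}))$ throughout. The plan is a pushout argument which reduces the surjectivity to a splitting problem on a finite-dimensional subrepresentation, where Proposition \ref{prop:be} applies, and then promotes the resulting $P$-equivariant splitting to a $(\mf g,P)$-equivariant one by Frobenius reciprocity. Fix $\phi\in\Hom_{(\mf g,P)}(M\otimes_L\mc C, V)$. Since every object of $\mc O^P$ carries its finest convex topology, the exact sequence $0\to M\otimes_L\mc C\to M'\otimes_L\mc C\to L(\chi')\otimes_L\mc C\to 0$ is topologically split, so forming the pushout of $\phi$ along its first arrow produces a compact-type locally analytic $(\mf g,P)$-module $\tilde E$ fitting into an exact sequence $0\to V\to\tilde E\overset{q}{\to}L(\chi')\otimes_L\mc C\to 0$, together with a $(\mf g,P)$-map $M'\otimes_L\mc C\to\tilde E$ restricting to $\phi$ on $M\otimes_L\mc C$. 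A standard diagram chase shows that $\phi$ lies in the image of the restriction map as soon as $q$ admits a $(\mf g,P)$-equivariant section, so it suffices to construct such a section.

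To do so I would first pass to a finite-dimensional piece. Since $L(\chi')\in\mc O^P$, Lemma \ref{lemma:L(chi)inOP} gives $d\chi'\in\Lambda_{\mf p_L}^+$ and shows that the $\mrm U(\mf p_L)$-submodule $U_{\chi'}\subset L(\chi')$ generated by the $B$-highest weight vector is a finite-dimensional $\mf l_P$-simple representation of $L_P$ (inflated to $P$) of highest weight $\chi'$, which $\mrm U(\mf g_L)$-generates the $\mrm U(\mf g_L)$-irreducible module $L(\chi')$. Pulling the sequence above back along the $P$-equivariant (but not $\mf g$-equivariant) inclusion $U_{\chi'}\otimes_L\mc C\hookrightarrow L(\chi')\otimes_L\mc C$ produces an exact sequence $0\to V\to\tilde E''\to U_{\chi'}\otimes_L\mc C\to 0$ of locally analytic representations of $P$, where $\tilde E'':=q^{-1}(U_{\chi'}\otimes_L\mc C)$. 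As $\chi'\neq\chi$ is strongly linked to $\chi$ and $d\chi'\in\Lambda_{\mf p_L}^+$, Proposition \ref{prop:be} applies and yields a $P$-equivariant section $s\colon U_{\chi'}\otimes_L\mc C\to\tilde E''\subset\tilde E$.

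The last step promotes $s$. Because $\mc C$ is a smooth representation of $P$, the Lie algebra $\mf p_L$ acts trivially on it, so there is a $(\mf g,P)$-equivariant identification $M_{\mf p_L}(U_{\chi'})\otimes_L\mc C\simeq\mrm U(\mf g_L)\otimes_{\mrm U(\mf p_L)}(U_{\chi'}\otimes_L\mc C)$ — this is the identification already used for the map $(d)$ in the diagram preceding the theorem, with $U_{\chi'}$ in place of $U$. By Frobenius reciprocity, together with the fact that a $\mrm U(\mf g_L)$-linear extension of a $P$-equivariant map defined on a $\mrm U(\mf p_L)$-generating submodule is automatically $P$-equivariant, $s$ extends uniquely to a $(\mf g,P)$-map $\tilde\sigma\colon M_{\mf p_L}(U_{\chi'})\otimes_L\mc C\to\tilde E$. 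Let $p'\colon M_{\mf p_L}(U_{\chi'})\otimes_L\mc C\twoheadrightarrow L(\chi')\otimes_L\mc C$ be the surjection induced by $\alpha_{U_{\chi'}}$ (Proposition \ref{prop:irred-objects}); comparing the two $\mrm U(\mf g_L)$-maps $q\circ\tilde\sigma$ and $p'$ on the generating submodule $U_{\chi'}\otimes_L\mc C$ shows $q\circ\tilde\sigma=p'$, so $\tilde\sigma$ carries $\ker p'=\ker(\alpha_{U_{\chi'}})\otimes_L\mc C$ into $V$.

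It then remains to see that $\tilde\sigma$ vanishes on $\ker p'$, for then it descends to a $(\mf g,P)$-map $\sigma\colon L(\chi')\otimes_L\mc C\to\tilde E$ with $q\circ\sigma=\id$, which is the section sought. By Proposition \ref{prop:irred-objects}(3) and Theorem \ref{theo:constituents}, $\ker(\alpha_{U_{\chi'}})$ is a finite-length object of $\mc O^P$ whose composition factors are of the form $L(\chi'')$ with $\chi''$ strongly linked to $\chi'$, $\chi''\neq\chi'$, and $d\chi''\in\Lambda_{\mf p_L}^+$; using Lemma \ref{lemm:characterization-strongly-linked} and the transitivity and antisymmetry of strong linkage together with the hypothesis $\chi'\neq\chi$, each such $\chi''$ is strongly linked to $\chi$ and satisfies $\chi''\neq\chi$. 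Exactly as in the proof of Proposition \ref{prop:iso-and-inj} — invoking Proposition \ref{prop:existence-findimirred} to get a finite-dimensional $\mf l_P$-simple $U_{\chi''}$, the adjointness of $J_P$, and Lemma \ref{lemm:gives_old_cond} (applicable since $\chi''\neq\chi$ is strongly linked to $\chi$ with $d\chi''\in\Lambda_{\mf p_L}^+$) — one obtains $\Hom_{(\mf g,P)}(L(\chi'')\otimes_L\mc C,V)=0$ for each such factor, and hence $\Hom_{(\mf g,P)}(\ker(\alpha_{U_{\chi'}})\otimes_L\mc C,V)=0$ by dévissage along a composition series, using that $-\otimes_L\mc C$ is exact (the sequences in $\mc O^P$ being topologically split) and that $\Hom_{(\mf g,P)}(-,V)$ is left exact. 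This forces $\tilde\sigma|_{\ker p'}=0$ and finishes the argument. I expect this promotion to be the main obstacle: Proposition \ref{prop:be} supplies a splitting only on the finite-dimensional piece $U_{\chi'}$ and only as $P$-representations, so recovering an honest $(\mf g,P)$-section of the infinite-dimensional extension by $L(\chi')\otimes_L\mc C$ requires both the generalized-Verma/Frobenius-reciprocity bookkeeping and a second use of the non-critical hypothesis to annihilate the maps out of the lower constituents.
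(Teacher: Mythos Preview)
Your proof is correct and follows essentially the same route as the paper's: form the pushout extension of $V$ by $L(\chi')\otimes_L\mc C$, restrict to the finite-dimensional piece $U_{\chi'}\otimes_L\mc C$ and split it as $P$-representations via Proposition \ref{prop:be}, promote the splitting to a $(\mf g,P)$-map out of $M_{\mf p_L}(U_{\chi'})\otimes_L\mc C$ by the universal property of the generalized Verma module, and finally use non-criticality and d\'evissage to kill the contribution of $\ker(M_{\mf p_L}(U_{\chi'})\twoheadrightarrow L(\chi'))$. The paper organizes the middle step slightly differently---it first pulls back along the Verma quotient $M_{\mf p_L}(U_{\chi'})\twoheadrightarrow L(\chi')$ to obtain an intermediate object $\tilde E_0$, then restricts to the preimage of $U_{\chi'}\otimes_L\mc C$ inside it---but this is only a reordering of the same construction, and the substantive inputs (Proposition \ref{prop:be}, the Frobenius-reciprocity extension, Theorem \ref{theo:constituents}, and Lemma \ref{lemm:gives_old_cond}) are invoked in the same way.
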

\begin{proof}
Since $M'/M = L(\chi')$ is in $\mc O^P$, $d\chi' \in \Lambda_{\mf p_L}^+$ (Lemma \ref{lemma:L(chi)inOP}) and we have a short exact sequence of $(\mf g,P)$-modules 
\begin{multline}\label{eqn:need-to-push}
0 \ra M\otimes_L \mc C_c^{\sm}(N_P,\pi(\delta_P^{-1})) \ra M'\otimes_L \mc C_c^{\sm}(N_P,\pi(\delta_P^{-1}))\\ \ra L(\chi')\otimes_L \mc C_c^{\sm}(N_P,\pi(\delta_P^{-1})) \ra 0.
\end{multline}
Let $f: M\otimes_L \mc C_c^{\sm}(N_P,\pi(\delta_P^{-1})) \ra V$ be $(\mf g,P)$-equivariant, and then form the pushout $E$ of the sequence \eqref{eqn:need-to-push} along $f$, so that $E$ is naturally a locally analytic $(\mf g,P)$-module sitting in a diagram
\begin{equation*}
\xymatrix{
0 \ar[d] & 0 \ar[d]\\
M\otimes_L \mc C_c^{\sm}(N_P,\pi(\delta_P^{-1})) \ar[d] \ar[r]^-{f} & V \ar[d]^-{\alpha_0}\\
M'\otimes_L \mc C_c^{\sm}(N_P,\pi(\delta_P^{-1})) \ar[d] \ar[r]^-{f'} & E \ar[d]^-{\beta_0} \\
L(\chi')\otimes_L \mc C_c^{\sm}(N_P,\pi(\delta_P^{-1})) \ar[d] \ar@{=}[r] &L(\chi')\otimes_L \mc C_c^{\sm}(N_P,\pi(\delta_P^{-1}))   \ar[d] \\
0 & 0
}
\end{equation*} 
To prove the proposition, it is enough to construct a section of $\beta_0$. Indeed, if $s_0$ is the corresponding section of $\alpha_0$ then $s_0\circ f'$ is an extension of $f$ to $M'\otimes_L \mc C_c^{\sm}(N_P,\pi(\delta_P^{-1})$.

Because $L(\chi ') \in \mc{O} ^P$, by Proposition \ref{prop:existence-findimirred} we get a locally analytic representation $U_{\chi'}$ of the highest weight $\chi'$. We can take the right-hand vertical sequence and pull it back along the quotient map $M_{\mf p _L}(U_{\chi'}) \twoheadrightarrow L(\chi')$. Then, we get a locally analytic $(\mf g,P)$-module $\tilde E_0$ defined by the diagram
\begin{equation}\label{eqn:split-diagram}
\xymatrix{
0 \ar[d] & 0 \ar[d]\\
V \ar[d]_-{\alpha_1} \ar@{=}[r] & V \ar[d]^-{\alpha_0}\\
\tilde E_0 \ar[d]_-{\beta_1} \ar[r]^-{f''} & E \ar[d]^-{\beta_0} \\
M_{\mf p _L}(U_{\chi'})\otimes_L \mc C_c^{\sm}(N_P,\pi(\delta_P^{-1})) \ar[d] \ar[r] &L(\chi')\otimes_L \mc C_c^{\sm}(N_P,\pi(\delta_P^{-1}))   \ar[d] \\
0 & 0
}
\end{equation}
We claim that to split the right-hand vertical sequence, it is enough to split the left-hand vertical sequence. To see that, suppose that $s_1$ is a section of $\beta_1$. Then we get a morphism 
\begin{equation*}
f''\circ s_1: M_{\mf p _L}(U_{\chi'})\otimes_L \mc C_c^{\sm}(N_P,\pi(\delta_P^{-1})) \ra E.
\end{equation*}
The non-criticality of $(U,\pi)$ with respect to $V$ implies that $f''\circ s_1$ must factor through $L(\chi')\otimes_L \mc C_c^{\sm}(N_P,\pi(\delta_P^{-1}))$, and thus defines a section of $\beta_0$. Indeed, if we write $M = \ker\left(M_{\mf p _L}(U_{\chi'}) \ra L(\chi')\right)$ then $f''\circ s_1$ induces a map
\begin{equation}\label{eqn:must-be-zero}
f'' \circ s_1 : M\otimes_L \mc C_c^{\sm}(N_P,\pi(\delta_P^{-1})) \ra V.
\end{equation}
The constituents of $M$ are all of the form $L(\chi'')$ with $\chi''$ strongly linked to $\chi'$, and thus also strongly linked (and not equal) to $\chi$. Moreover each $\chi''$ has $d\chi'' \in \Lambda_{\mf p_L}^+$ (Lemma \ref{lemma:L(chi)inOP}). By Lemma \ref{lemm:gives_old_cond}, and the right-hand side of the main diagram on page \pageref{eqn:big_diagram}, we deduce that every map $L(\chi'') \otimes_L \mc C_c^{\sm}(N_P,\pi(\delta_P^{-1})) \ra V$ is zero, and thus \eqref{eqn:must-be-zero} is also zero by d\'evissage.

We have now reduced the theorem to showing the left-hand vertical sequence in \eqref{eqn:split-diagram} is split. But, if we let $\tilde E \subset \tilde E_0$ be the $P$-stable subspace which is the preimage of $U_{\chi'}\otimes_L \mc C_c^{\sm}(N_P,\pi(\delta_P^{-1}))$ under the map $\beta_1$ then we get an exact sequence
\begin{equation*}
0 \ra V \ra \tilde E \ra U_{\chi'} \otimes_L \mc C_c^{\sm}(N_P,\pi(\delta_P^{-1})) \ra 0
\end{equation*}
of locally analytic representations of $P$. By Proposition \ref{prop:be} it is split and any $P$-equivariant splitting induces a $(\mf g,P)$-equivariant splitting of $\beta_1$ by extending $\mrm U(\mf g _L)$-equivariantly (the result is easily checked to be $P$-equivariant still). This completes the proof.
\end{proof}
\begin{rema}
The middle step of the previous proof, where we considered the constituents of $M_{\mf p _L}(U_{\chi'})$, is where we need to know that $J_P(V)^{Z(L_P)=\omega_{\pi}\chi''} = (0)$ not just for constituents $L(\chi'')$ of $M_{\mf p _L}(U)$ but for all strongly linked characters $\chi'' \neq \chi$ with $L(\chi'')$ appearing in $\mathcal O^P$. (Compare with Remark \ref{remark:we-wish}.)
\end{rema}

We can finally finish the proof of the main theorem.
\begin{coro}\label{coro:final-corollary}
The map (3a) is surjective.
\end{coro}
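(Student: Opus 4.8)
The plan is to deduce Corollary \ref{coro:final-corollary} from Theorem \ref{theo:sur} by a finite dévissage along a composition series of $M' = \ker\left(M_{\mf p_L}(U) \ra L(U)\right)$ in the category $\mc O^P$. Recall that the map (3a) is
\begin{equation*}
\Hom_{(\mf g,P)}(L(U)\otimes_L \mc C_c^{\sm}(N_P,\pi(\delta_P^{-1})),V) \ra \Hom_{(\mf g,P)}(M_{\mf p_L}(U)\otimes_L \mc C_c^{\sm}(N_P,\pi(\delta_P^{-1})),V),
\end{equation*}
induced by the surjection $M_{\mf p_L}(U) \twoheadrightarrow L(U)$. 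First I would record that $M_{\mf p_L}(U)$ has finite length in $\mc O^P$ by Proposition \ref{prop:irred-objects}(3), so that $M'$ has a finite filtration $0 = M_0 \subset M_1 \subset \dotsb \subset M_n = M'$ with each $M_{i+1}/M_i \simeq L(\chi_i')$ for some $\chi_i'$ that is strongly linked to $\chi$ (by Theorem \ref{theo:constituents}(1)), and with $\chi_i' \neq \chi$ since $L(U) = L(\chi)$ already accounts for the unique occurrence of $L(\chi)$ as a subquotient of $M_{\mf p_L}(U)$, again by Proposition \ref{prop:irred-objects}(3). Extending this filtration by $M' = M_n \subset M_{n+1} = M_{\mf p_L}(U)$ with quotient $M_{\mf p_L}(U)/M' \simeq L(U) = L(\chi)$, the claim is that applying $\Hom_{(\mf g,P)}(-\otimes_L \mc C_c^{\sm}(N_P,\pi(\delta_P^{-1})),V)$ to each successive inclusion $M_i \subset M_{i+1}$ for $i = 0,\dotsc,n-1$ gives a surjection.

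For the steps $0 \leq i \leq n-1$ the successive quotient is $L(\chi_i')$ with $\chi_i' \neq \chi$ strongly linked to $\chi$, so Theorem \ref{theo:sur} (applied with $M = M_i$, $M' = M_{i+1}$, $\chi' = \chi_i'$) immediately yields that
\begin{equation*}
\Hom_{(\mf g,P)}(M_{i+1}\otimes_L \mc C_c^{\sm}(N_P,\pi(\delta_P^{-1})),V) \ra \Hom_{(\mf g,P)}(M_i\otimes_L \mc C_c^{\sm}(N_P,\pi(\delta_P^{-1})),V)
\end{equation*}
is surjective. Composing these surjections for $i$ from $n-1$ down to $0$, and using that $M_0 = 0$ so the corresponding $\Hom$ vanishes (which is harmless—what matters is the composite map on the nonzero end), I would conclude that the restriction map
\begin{equation*}
\Hom_{(\mf g,P)}(M'\otimes_L \mc C_c^{\sm}(N_P,\pi(\delta_P^{-1})),V) \ra \Hom_{(\mf g,P)}(0,V) = (0)
\end{equation*}
is surjective — but more usefully, the exact sequence $0 \ra M' \ra M_{\mf p_L}(U) \ra L(U) \ra 0$ tensored with $\mc C_c^{\sm}(N_P,\pi(\delta_P^{-1}))$ and fed into $\Hom_{(\mf g,P)}(-,V)$ gives a left-exact sequence
\begin{equation*}
0 \ra \Hom_{(\mf g,P)}(L(U)\otimes \dotsb,V) \ra \Hom_{(\mf g,P)}(M_{\mf p_L}(U)\otimes \dotsb,V) \ra \Hom_{(\mf g,P)}(M'\otimes \dotsb,V),
\end{equation*}
and (3a) is surjective precisely when the last arrow here is zero. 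So I would instead argue directly: it suffices to show $\Hom_{(\mf g,P)}(M'\otimes_L \mc C_c^{\sm}(N_P,\pi(\delta_P^{-1})),V) = (0)$, and this is exactly what the dévissage along the $M_i$ delivers, since at each stage Theorem \ref{theo:sur} shows the restriction from $M_{i+1}$ to $M_i$ is surjective while the kernel $\Hom_{(\mf g,P)}(L(\chi_i')\otimes \dotsb,V)$ already vanishes by Lemma \ref{lemm:gives_old_cond} together with the adjunction of $J_P(-)$ (as in the proof of Proposition \ref{prop:iso-and-inj}). Hence $\Hom_{(\mf g,P)}(M'\otimes \dotsb,V) = (0)$ by induction on $n$, and therefore (3a) is surjective; combined with Proposition \ref{prop:iso-and-inj}, (3a) is an isomorphism.

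The only mild subtlety—hardly an obstacle, since all the hard analysis is already packaged in Theorem \ref{theo:sur} and Proposition \ref{prop:be}—is bookkeeping: making sure that the $L(\chi_i')$ occurring in the filtration of $M'$ genuinely all have $\chi_i' \neq \chi$, which is where Proposition \ref{prop:irred-objects}(3) (uniqueness of the multiplicity of $L(U)$ in $M_{\mf p_L}(U)$) is essential, and that each $\chi_i'$ is strongly linked to $\chi$ with $d\chi_i' \in \Lambda_{\mf p_L}^+$ so that Theorem \ref{theo:sur} applies verbatim. Since $M' \subset M_{\mf p_L}(U)$, Theorem \ref{theo:constituents}(1) and Lemma \ref{lemma:L(chi)inOP}(1) give exactly these two properties, so no further work is needed.

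\begin{proof}[Proof of Corollary \ref{coro:final-corollary}]
By Proposition \ref{prop:iso-and-inj} we already know (3a) is injective, so it suffices to prove surjectivity. Let $M' = \ker\left(M_{\mf p_L}(U) \ra L(U)\right)$, an object of $\mc O^P$. The exact sequence $0 \ra M' \ra M_{\mf p_L}(U) \ra L(U) \ra 0$, tensored over $L$ with $\mc C_c^{\sm}(N_P,\pi(\delta_P^{-1}))$, yields upon applying $\Hom_{(\mf g,P)}(-,V)$ a left-exact sequence
\begin{equation*}
0 \ra \Hom_{(\mf g,P)}(L(U)\otimes_L \mc C_c^{\sm}(N_P,\pi(\delta_P^{-1})),V) \overset{(3\mathrm a)}{\ra} \Hom_{(\mf g,P)}(M_{\mf p_L}(U)\otimes_L \mc C_c^{\sm}(N_P,\pi(\delta_P^{-1})),V) \ra \Hom_{(\mf g,P)}(M'\otimes_L \mc C_c^{\sm}(N_P,\pi(\delta_P^{-1})),V).
\end{equation*}
Thus (3a) is surjective as soon as $\Hom_{(\mf g,P)}(M'\otimes_L \mc C_c^{\sm}(N_P,\pi(\delta_P^{-1})),V) = (0)$, which is what we now prove.

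By Proposition \ref{prop:irred-objects}(3), $M_{\mf p_L}(U)$ has finite length in $\mc O^P$, so $M'$ admits a filtration $0 = M_0 \subset M_1 \subset \dotsb \subset M_n = M'$ with $M_{i+1}/M_i \simeq L(\chi_i')$ for suitable locally analytic characters $\chi_i'$ of $T$. Since $M_i \subset M' \subset M_{\mf p_L}(U)$, Theorem \ref{theo:constituents}(1) shows each $\chi_i'$ is strongly linked to $\chi$ with $d\chi_i' \in \Lambda_{\mf p_L}^+$, and $\chi_i' \neq \chi$: indeed $L(U) = L(\chi)$ and, by Proposition \ref{prop:irred-objects}(3), $L(\chi)$ occurs exactly once as a subquotient of $M_{\mf p_L}(U)$, namely as the quotient $M_{\mf p_L}(U)/M'$, so it cannot also occur inside $M'$.

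We show $\Hom_{(\mf g,P)}(M_i\otimes_L \mc C_c^{\sm}(N_P,\pi(\delta_P^{-1})),V) = (0)$ by induction on $i$. For $i=0$ this is clear. Assume it for $M_i$. By Proposition \ref{prop:existence-findimirred} there is an $\mf l_P$-simple $U_{\chi_i'}$ in $\mc O^{L_P}$ with highest weight $\chi_i'$, hence a surjection $M_{\mf p_L}(U_{\chi_i'}) \twoheadrightarrow L(\chi_i')$ in $\mc O^P$. By Lemma \ref{lemm:gives_old_cond}, $\Hom_{L_P}(U_{\chi_i'}\otimes_L \pi, J_P(V)) = (0)$, so by the adjunction defining $J_P(-)$ we get $\Hom_P(\mc C_c^{\sm}(N_P,U_{\chi_i'}\otimes_L \pi(\delta_P^{-1})),V) = (0)$, and therefore
\begin{equation*}
\Hom_{(\mf g,P)}(L(\chi_i')\otimes_L \mc C_c^{\sm}(N_P,\pi(\delta_P^{-1})),V) \subset \Hom_{(\mf g,P)}(M_{\mf p_L}(U_{\chi_i'})\otimes_L \mc C_c^{\sm}(N_P,\pi(\delta_P^{-1})),V) = (0).
\end{equation*}
On the other hand, Theorem \ref{theo:sur} applied to $M_i \subset M_{i+1}$ with quotient $L(\chi_i')$ (legitimate since $\chi_i' \neq \chi$ is strongly linked to $\chi$) shows the restriction map
\begin{equation*}
\Hom_{(\mf g,P)}(M_{i+1}\otimes_L \mc C_c^{\sm}(N_P,\pi(\delta_P^{-1})),V) \ra \Hom_{(\mf g,P)}(M_i\otimes_L \mc C_c^{\sm}(N_P,\pi(\delta_P^{-1})),V)
\end{equation*}
is surjective. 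Its kernel is contained in $\Hom_{(\mf g,P)}(L(\chi_i')\otimes_L \mc C_c^{\sm}(N_P,\pi(\delta_P^{-1})),V) = (0)$ and its target is $(0)$ by the inductive hypothesis, so the source vanishes. This completes the induction; taking $i=n$ gives $\Hom_{(\mf g,P)}(M'\otimes_L \mc C_c^{\sm}(N_P,\pi(\delta_P^{-1})),V) = (0)$, as required.
\end{proof}
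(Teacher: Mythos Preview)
There is a genuine gap: you have misidentified the map (3a). In the diagram on page \pageref{eqn:big_diagram}, (3a) goes from
\[
\Hom_{(\mf g,P)}\bigl(M_{\mf p_L}(U)^{\vee}\otimes_L \mc C_c^{\sm}(N_P,\pi(\delta_P^{-1})),V\bigr)
\]
to
\[
\Hom_{(\mf g,P)}\bigl(L(U)\otimes_L \mc C_c^{\sm}(N_P,\pi(\delta_P^{-1})),V\bigr),
\]
and is induced by the \emph{inclusion} $L(U)\hookrightarrow M_{\mf p_L}(U)^{\vee}$. The map you describe, induced by the surjection $M_{\mf p_L}(U)\twoheadrightarrow L(U)$, is (3b), and Proposition \ref{prop:iso-and-inj} already proves it is an isomorphism. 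Your argument---showing $\Hom_{(\mf g,P)}(M'\otimes\dotsb,V)=(0)$ for $M'=\ker\bigl(M_{\mf p_L}(U)\to L(U)\bigr)$---therefore re-establishes (3b), not (3a). (Your appeal to Theorem \ref{theo:sur} there is in fact superfluous: left-exactness of $\Hom$ together with the vanishing of $\Hom_{(\mf g,P)}(L(\chi_i')\otimes\dotsb,V)$ already gives $\Hom_{(\mf g,P)}(M_{i+1}\otimes\dotsb,V)\hookrightarrow\Hom_{(\mf g,P)}(M_i\otimes\dotsb,V)$, hence the vanishing by induction. This is exactly the content of the proof of Proposition \ref{prop:iso-and-inj}.)

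Surjectivity of (3a) is an \emph{extension} problem: given a $(\mf g,P)$-map out of $L(U)\otimes\dotsb$, one must extend it over the larger object $M_{\mf p_L}(U)^{\vee}\otimes\dotsb$. No vanishing of $\Hom$'s accomplishes this; it is precisely where Theorem \ref{theo:sur} does real work. The correct d\'evissage runs along a filtration $L(U)=N_0\subset N_1\subset\dotsb\subset N_m=M_{\mf p_L}(U)^{\vee}$ with successive quotients $N_{j+1}/N_j\simeq L(\chi_j')$, where each $\chi_j'\neq\chi$ is strongly linked to $\chi$ (by Theorem \ref{theo:constituents}(2) together with Proposition \ref{prop:irred-objects}(3), since $L(U)=L(\chi)$ occurs exactly once in $M_{\mf p_L}(U)^{\vee}$ and is already the bottom piece). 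Theorem \ref{theo:sur} then gives surjectivity of $\Hom_{(\mf g,P)}(N_{j+1}\otimes\dotsb,V)\to\Hom_{(\mf g,P)}(N_j\otimes\dotsb,V)$ at each step, and composing these yields surjectivity of (3a). This is the paper's argument.
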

\begin{proof}
If $M \subset M_{\mf p _L}(U)^{\vee}$ is a subobject in $\mc O^P$ then the constituents of $M_{\mf p _L}(U)^{\vee}/M$ are all of the form $L(\chi')$ with $\chi'\neq\chi$ strongly linked to $\chi'$ and $d\chi' \in \Lambda_{\mf p_L}^+$ (see Proposition \ref{prop:irred-objects} and Theorem \ref{theo:constituents}). Thus the corollary follows from Theorem \ref{theo:sur} by descending induction on the length of $M$, starting with $M = M_{\mf p _L}(U)^{\vee}$ and ending with $M = L(\chi)$.
\end{proof}

\section{Restriction to the socle}\label{section:socle}
The goal of this section to improve Theorem \ref{theo:adj-theo}. Namely, we study the restriction of morphisms $\Ind_{P^-}^G(U\otimes \pi(\delta_P^{-1})) \ra V$ which arise via adjunction in the non-critical case to the {\em locally analytic socle}. Unlike the rest of this article, we restrict here to the case of finite-dimensional irreducible $\mbf Q_p$-algebraic representations $U$ of $L_P$, i.e.\ the canonical action of $L_P$ on a finite-dimensional irreducible algebraic representation of the underlying reductive group $\Res_{K/\mbf Q_p} \mbf L_{\mbf P}$. This is necessary to use of results of Breuil in \cite{br}, extending recent progress of Orlik--Strauch \cite{os-old, os} on studying the locally analytic principal series.

\subsection{The Orlik--Strauch representations}
Suppose that $M \in \mc O^P$ and $\pi$ is a smooth admissible representation of $L_P$. In \cite{os} (and \cite{os-old}) Orlik and Strauch  define a locally analytic representation $\mc F_{P}^G(M,\pi)$ of the group $G$. The association $(M,\pi) \mapsto \mc F_P^{G}(M,\pi)$ is a functor, contravariant in $M$ and covariant in $\pi$. It is exact in both arguments (see \cite[Section 3]{os} for these results). When $U$ is in $\mc O^{L_P}$, we can consider the generalized Verma module $M_{\mf p _L}(U) \in \mc O^P$. The definition of the Orlik--Strauch representations immediately gives
\begin{equation*}
\mc F_P^G(M_{\mf p _L}(U), \pi) = \Ind_P^G(U^{\ast} \otimes \pi)^{\an},
\end{equation*}
so these representations naturally include locally analytic principal series (note that the representation $U^{\ast}$ on the right is the {\em dual} of the representation $U$ on the left).

Recall that if $M \in \mc O^P$ then we say that $\mf p_L$ is {\em maximal} for $M$ if $M \not\in \mc O^{\mf q_L}$ for all $\mf q_L \supsetneq \mf p_L$. Since $\mc F_P^G(M,\pi)$ is exact in both arguments there are obvious necessary conditions for the irreducibility. Orlik and Strauch established sufficient conditions.
\begin{theo}[{\cite[Theorem 1.1]{os}}]\label{theo:irreducibility}
If $M \in \mc O^P$ is simple, $\mf p$ is maximal for $M$ and $\pi$ is an irreducible smooth representation of $L_P$ then $\mc F_P^G(M,\pi)$ is topologically irreducible. 
\end{theo}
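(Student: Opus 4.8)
The statement I want to prove is Theorem~\ref{theo:irreducibility}: if $M \in \mc O^P$ is simple, $\mf p$ is maximal for $M$, and $\pi$ is an irreducible smooth representation of $L_P$, then $\mc F_P^G(M,\pi)$ is topologically irreducible. Since this is quoted as a theorem of Orlik--Strauch, a ``proof proposal'' here is really a sketch of the strategy their argument follows. The plan is to first reduce, via the exactness and functoriality properties of $\mc F_P^G(-,-)$ recalled above, to a statement about the ``Jordan--H\"older'' content of $\mc F_P^G(M,\pi)$: because $\mc F_P^G$ is exact in both arguments and turns short exact sequences into short exact sequences, the irreducible constituents of $\mc F_P^G(M,\pi)$ correspond to pairs $(L,\sigma)$ where $L$ ranges over the composition factors of suitable objects of category $\mc O$ and $\sigma$ over the smooth constituents of $\pi$. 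The hypotheses $M$ simple and $\pi$ irreducible then force there to be ``only one'' candidate, so that $\mc F_P^G(M,\pi)$ has a unique irreducible subquotient in an appropriate sense, and one must upgrade this to genuine topological irreducibility.

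The key steps, in order, would be: (1) Set up the dictionary between $\mc F_P^G(M,\pi)$ and the data of category $\mc O$ together with smooth representations of Levi subgroups; in particular record that $\mc F_P^G$ intertwines the parabolic $\mf p$ attached to $M$ with the standard parabolic $P \subset G$, and that enlarging the parabolic (which is possible exactly when $\mf p$ fails to be maximal for $M$) changes the representation. (2) Use maximality of $\mf p$ for $M$ to pin down the parabolic data uniquely: if $\mf p$ were not maximal, $M$ would lie in $\mc O^{\mf q_L}$ for some larger $\mf q_L$ and $\mc F_P^G(M,\pi)$ would be induced from a proper parabolic $Q \supsetneq P$, which one analyzes separately. (3) Combine simplicity of $M$, irreducibility of $\pi$ (hence irreducibility of the smooth induction $\Ind_{P}^{G}(\pi)^{\sm}$ in the relevant range, or else handle reducibility of the smooth induction by the standard theory of intertwining operators), and the exactness of $\mc F_P^G$ to conclude that $\mc F_P^G(M,\pi)$ admits no proper nonzero closed invariant subspace: any such subspace would, after applying the (exact, faithful) passage to $\mc O$ and to smooth constituents, produce a proper subobject of $M$ or of $\pi$, a contradiction. (4) Finally, address the topological subtlety: closed invariant subspaces, not merely algebraic ones, must be ruled out; this is where one invokes the fact that $\mc F_P^G(M,\pi)$ is of compact type and that the functor $\mc F_P^G$ reflects closed subobjects, so that the algebraic irreducibility argument in step~(3) actually yields topological irreducibility.

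The main obstacle I expect is step~(3)--(4): controlling \emph{closed} $G$-invariant subspaces rather than abstract submodules. The derived/Lie-algebra action alone sees only the category $\mc O$ side and the smooth $G$-action alone sees only $\pi$, but a closed subrepresentation of $\mc F_P^G(M,\pi)$ could in principle ``mix'' the two in a way not visible to either functor separately; ruling this out requires the full strength of the Orlik--Strauch analysis of the structure of $\mc F_P^G$, namely that it is fully faithful (contravariantly in $M$, covariantly in $\pi$) on the relevant subcategories and that it reflects exact sequences of topological representations. A secondary point requiring care is the possible reducibility of the smooth parabolic induction $\Ind_P^G(\pi)^{\sm}$: when it is reducible one cannot simply cite irreducibility of $\pi$, and one must instead track how the smooth constituents interact with the condition that $\mf p$ is maximal for $M$ --- but under the stated hypotheses this interaction is benign, and the argument goes through. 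I would present the reduction in steps~(1)--(2) in detail and then cite \cite[Theorem 1.1]{os} for the technical heart, since the result is not original to the present paper.
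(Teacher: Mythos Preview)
The paper does not prove this theorem at all: it is stated with the citation \cite[Theorem 1.1]{os} and used as a black box. Your proposal ultimately arrives at the same conclusion --- you say you would ``cite \cite[Theorem 1.1]{os} for the technical heart, since the result is not original to the present paper'' --- so in that sense your approach matches the paper's.

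That said, the heuristic sketch you offer of the Orlik--Strauch argument is not accurate, and if you intend to include it as motivation you should revise it. In particular: the irreducibility of the smooth induction $\Ind_P^G(\pi)^{\sm}$ plays no role (it is typically reducible, and this is precisely why the theorem is delicate); and the slogan ``a closed subspace would produce a proper subobject of $M$ or of $\pi$'' is too naive --- the actual mechanism passes through the locally analytic distribution algebra $D(G,L)$, its Fr\'echet--Stein structure, and a careful analysis of coadmissible modules, rather than any simple ``unmixing'' of the $\mc O$-side and the smooth side. Your step (4) gestures at the right difficulty but the resolution is not that $\mc F_P^G$ ``reflects closed subobjects'' in any formal sense. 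If you keep the sketch, it would be safer to say only that the proof proceeds via the duality between locally analytic representations and coadmissible $D(G,L)$-modules, and refer the reader to \cite{os} for details; otherwise simply cite the result as the paper does.
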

\begin{rema}\label{rem:pq-formula}
It is possible to deduce the irreducibility in some cases without assuming that $P$ is the maximal parabolic. This is done by using a relation (the ``$PQ$''-formula) between $\mc F_Q^G(-,-)$ and $\mc F_P^G(-,-)$ for a containment $P \subset Q$ of a parabolics. See the proof Theorem \ref{coro:socle}.
\end{rema}

\subsection{An adjunction formula with the socle in the algebraic case}
Prior to the work \cite{os}, results mentioned in the previous section (especially Theorem \ref{theo:irreducibility}) were established in the algebraic case (see \cite[Theorem 5.8]{os-old} for example). We recall the algebraic case now and prove Theorem B.

Following \cite{os-old}, we write $\mc{O} _{\alg}$ for the subcategory of $M \in \mc O^{\mf b_L}$ such that all the weights of $\mf t_L$ acting on $M$ are restrictions of $\mbf Q_p$-algebraic characters of $T$. Such $M$ are generated over $\mrm U(\mf g _L)$ by highest weight vectors of algebraic weight. We also write $\mc{O} ^{\mf p_L} _{\alg} := \mc{O} _{\alg} \cap \mc{O} ^{\mf p_L}$. The $\mrm U(\mf g _L)$-module structure on an element in $\mc O_{\alg}^{\mf p_L}$ naturally extends to a locally analytic action of $P$, defining a fully faithful embedding $\mc O_{\alg}^{\mf p_L} \hookrightarrow \mc O^P$ (see \cite[Example 2.4(i)]{os}). If $U$ is an $L$-linear finite-dimensional locally analytic representation of $L_P$, the generalized Verma module $M_{\mf p_L}(U)$ will lie in $\mc O^{\mf p_L}_{\alg}$ if and only if $U$ is an $\mbf Q_p$-algebraic representation of $L_P$.

Within the category $\mc O_{\alg}$, Breuil was able to prove an adjunction formula for the Emerton--Jacquet functor (see \cite[Th\'eor\`eme 4.3]{br}) in which, rather than locally analytic principal series appearing, one has the Orlik--Strauch representations appearing. The key computation is contained in the following proposition which we will use. 

Recall that if $M \in \mc O_{\alg}^{\mf p_L^-} \subset \mc O^{P^-}$ then it has an opposite dual $M^{-} \in \mc O_{\alg}^{\mf p} \subset \mc O^P$.

\begin{prop}[{\cite[Proposition 4.2]{br}}]\label{br-adj}
Suppose that $M\in \mc{O} _{\alg} ^{\mf p_L ^-}$ and $\pi$ is a smooth admissible representation of $L_P$ of finite length. Let $V$ be an $L$-linear very strongly admissible representation of $G$. Then there is a canonical isomorphism
\begin{equation*}
\Hom _G (\mc{F}_{P^-} ^G(M,\pi), V) \simeq \Hom _{(\mf g, P)}(M ^{-} \otimes \mc{C} _c ^{\sm}(N_P, \pi), V).
\end{equation*}
\end{prop}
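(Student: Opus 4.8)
The plan is to treat both sides of the claimed isomorphism as left-exact contravariant functors of $M\in\mc O_{\alg}^{\mf p_L^-}$, to write down a natural transformation between them, to verify it is an isomorphism when $M$ is a generalized Verma module, and then to conclude for general $M$ by d\'evissage through a presentation by generalized Verma modules. On the left, $\mc F_{P^-}^G(-,\pi)$ is exact and contravariant in $M$ by \cite[Section~3]{os} and $\Hom_G(-,V)$ is left exact, so $M\mapsto\Hom_G(\mc F_{P^-}^G(M,\pi),V)$ is left-exact contravariant; on the right, $M\mapsto M^-$ is exact contravariant by Proposition~\ref{prop:duality-exact}, the functor $(-)\otimes_L\mc C_c^{\sm}(N_P,\pi)$ is exact on $\mc O^P$ (every object is a countable increasing union of finite-dimensional pieces, short exact sequences in $\mc O^P$ split topologically, and the tensor product commutes with these colimits and with finite direct sums), and $\Hom_{(\mf g,P)}(-,V)$ is left exact, so $M\mapsto\Hom_{(\mf g,P)}(M^-\otimes_L\mc C_c^{\sm}(N_P,\pi),V)$ is left-exact contravariant as well.

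For the natural transformation I would begin with $M=M_{\mf p_L^-}(U)$, $U$ a finite-dimensional locally analytic representation of $L_P$. By the definition of the Orlik--Strauch functor recalled above, $\mc F_{P^-}^G(M_{\mf p_L^-}(U),\pi)=\Ind_{P^-}^G(U^{\ast}\otimes\pi)^{\an}$ with $U^{\ast}=U^{\ast,\iota}$; Proposition~\ref{prop:polynomia-opposite-dual}, applied with the representation there taken to be $U^{\ast,\iota}$ and using $(U^{\ast,\iota})^{\ast,\iota}\simeq U$, gives a canonical $(\mf g,P)$-equivariant isomorphism $M_{\mf p_L^-}(U)^-\simeq\mc C^{\pol}(N_P,U^{\ast})$, hence
\begin{equation*}
M_{\mf p_L^-}(U)^-\otimes_L\mc C_c^{\sm}(N_P,\pi)\simeq\mc C^{\pol}(N_P,U^{\ast})\otimes_L\mc C_c^{\sm}(N_P,\pi)=\mc C_c^{\lp}(N_P,U^{\ast}\otimes\pi),
\end{equation*}
which is exactly the $(\mf g,P)$-stable big-cell subspace of $\Ind_{P^-}^G(U^{\ast}\otimes\pi)^{\an}$. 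Restriction along this inclusion gives the transformation on generalized Verma modules. For general $M$ I would choose a surjection $q\colon M_{\mf p_L^-}(U_0)\twoheadrightarrow M$ with $U_0$ finite-dimensional algebraic; since $\mc F_{P^-}^G(-,\pi)$ and $(-)^-$ are exact and contravariant, they identify $\mc F_{P^-}^G(M,\pi)$ and $M^-$ with kernels of the maps induced by $q$ on a two-term generalized-Verma resolution, so the big-cell inclusion for $M_{\mf p_L^-}(U_0)$ restricts to a canonical $(\mf g,P)$-equivariant map $M^-\otimes_L\mc C_c^{\sm}(N_P,\pi)\to\mc F_{P^-}^G(M,\pi)$, independent of the resolution, and restriction along it defines the natural transformation in general.

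Next I would check that the transformation is an isomorphism on $M=M_{\mf p_L^-}(U)$: it is the composite of the isomorphism of Proposition~\ref{prop:polynomia-opposite-dual} and the elementary decomposition $\mc C_c^{\lp}(N_P,U^{\ast}\otimes\pi)=\mc C^{\pol}(N_P,U^{\ast})\otimes_L\mc C_c^{\sm}(N_P,\pi)$, which identify the target with $\Hom_{(\mf g,P)}(\mc C_c^{\lp}(N_P,U^{\ast}\otimes\pi),V)$, together with the fact that, since $V$ is very strongly admissible, restriction from $\Ind_{P^-}^G(U^{\ast}\otimes\pi)^{\an}$ to its big cell induces an isomorphism $\Hom_G(\Ind_{P^-}^G(U^{\ast}\otimes\pi)^{\an},V)\overset{\simeq}{\longrightarrow}\Hom_{(\mf g,P)}(\mc C_c^{\lp}(N_P,U^{\ast}\otimes\pi),V)$ by \cite[Corollary~4.3.3]{em3}. (Both sides carry exactly the same normalization here, so no modulus-character bookkeeping is needed beyond what is already absorbed into the choice of $U$.) Then, for arbitrary $M\in\mc O_{\alg}^{\mf p_L^-}$, I would pick a presentation $M_{\mf p_L^-}(U_1)\to M_{\mf p_L^-}(U_0)\to M\to 0$ with $U_0,U_1$ finite-dimensional algebraic --- possible because every object of $\mc O^{P^-}$ is a quotient of a generalized Verma module, every object of $\mc O^{\mf p_L^-}$ has finite length (so the kernel of $M_{\mf p_L^-}(U_0)\twoheadrightarrow M$ is again such a quotient), and $\mc O_{\alg}$ is generated by algebraic highest weight vectors and closed under subquotients --- apply the two left-exact contravariant functors and the natural transformation, and conclude by the five lemma that the transformation is an isomorphism on $M$.

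The step I expect to be the main obstacle is the well-definedness and naturality of the transformation for non-Verma $M$: one must check that the big-cell inclusions $\mc C_c^{\lp}(N_P,-)\hookrightarrow\Ind_{P^-}^G(-)^{\an}$ are compatible with the Orlik--Strauch functoriality in $M$ --- in particular with the way the opposite dual of $M$ appears on the big cell --- which forces one to work with the precise construction of $\mc F_{P^-}^G(-,\pi)$ from \cite{os}. Once that compatibility is in hand, the remainder is Emerton's analytic input \cite[Corollary~4.3.3]{em3}, the explicit duality of Proposition~\ref{prop:polynomia-opposite-dual}, and homological formalities.
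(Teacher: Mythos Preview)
The paper does not prove this proposition; it is stated with attribution to \cite[Proposition~4.2]{br} and used as a black box in the proof of Theorem~\ref{coro:socle}. So there is no ``paper's own proof'' to compare against here.

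That said, your outline is a faithful reconstruction of how such a result is proved and matches the strategy in Breuil's paper: reduce to generalized Verma modules, where the Orlik--Strauch representation is a locally analytic induction and the opposite dual identifies with $\mc C^{\pol}(N_P,U^{\ast})$ (your use of Proposition~\ref{prop:polynomia-opposite-dual} is exactly right), then invoke \cite[Corollary~4.3.3]{em3} for the big-cell restriction isomorphism, and finish by d\'evissage using exactness of $\mc F_{P^-}^G(-,\pi)$ and of $(-)^-$ together with the five lemma. The point you flag as the main obstacle---checking that the big-cell inclusion is natural in $M$ with respect to the Orlik--Strauch functoriality---is indeed where the work lies, and it is handled in \cite{br} by unwinding the construction of $\mc F_{P^-}^G$ in \cite{os}. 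Your sketch is correct in outline; for a complete argument you would need to carry out that compatibility check explicitly, which amounts to verifying that the functorial maps on $\mc F_{P^-}^G$ restrict on the big cell to the maps induced by $(-)^-$ on the $\mc C^{\pol}$-factor.
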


Our improvement of Theorem \ref{theo:adj-theo} is a combination of the previous two results. If $U$ is an irreducible, finite-dimensional, algebraic representation of $L_P$ then it is $\mf l_P$-simple and thus the results of Section \ref{sec:adj-formula} apply (see Example \ref{exam:include_loc_algebraics}). If $\chi$ is the algebraic highest weight character of $U$ then $\chi^{-1}$ is the highest weight for the dual representation $U^{\ast}$. In particular, if $Q$ is the maximal parabolic for $U$ then $Q^-$ is the maximal parabolic for $U^{\ast}$.

\begin{theo}\label{coro:socle}
Let $U$ be an irreducible finite-dimensional algebraic representation of $L_P$ with maximal parabolic $Q$ and suppose that $\pi$ is a finite length smooth admissible representation of $L_P$ admitting a central character such that $\Ind_{P^- \cap L_Q}^{L_Q}(\pi)^{\sm}$ is irreducible. Let $V$ be an $L$-linear very strongly admissible and f-$\mf p$-acyclic representation of $G$ such that $(U,\pi(\delta_P))$ is non-critical with respect to $V$. Then the containment
\begin{equation*}
\soc _G \Ind _{P^-} ^G(U \otimes \pi) ^{\an} \subset \Ind _{P^-} ^G(U \otimes \pi) ^{\an}
\end{equation*}
defines a natural isomorphism
\begin{equation*}
\Hom _G(\Ind _{P^-} ^G(U \otimes \pi) ^{\an}, V) \simeq \Hom _G (\soc _G \Ind _{P^-} ^G(U \otimes \pi) ^{\an}, V).
\end{equation*}
\end{theo}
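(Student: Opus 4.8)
The plan is to show that the inclusion $\soc_G \Ind_{P^-}^G(U\otimes\pi)^{\an} \hookrightarrow \Ind_{P^-}^G(U\otimes\pi)^{\an}$ has a cokernel $Q_0$ with $\Hom_G(Q_0, V) = 0$ and $\Ext^1_G(Q_0, V) = 0$, so that applying $\Hom_G(-,V)$ to the short exact sequence $0 \to \soc \to \Ind \to Q_0 \to 0$ yields the desired isomorphism. In fact it suffices to identify the socle, show it is irreducible, and control the constituents of $Q_0$; the cleanest route is to chase everything through the identification of $\Hom_G(\Ind_{P^-}^G(U\otimes\pi)^{\an}, V)$ with a Hom-space in $\mc O^P$ already set up in Section~\ref{sec:adj-formula}, together with Breuil's formula (Proposition~\ref{br-adj}) for the Orlik--Strauch representations.

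First I would translate the principal series into Orlik--Strauch language: since $U$ is algebraic, $\Ind_{P^-}^G(U\otimes\pi)^{\an} = \mc F_{P^-}^G(M_{\mf p_L^-}(U^{\ast,\iota}), \pi)$ (up to the appropriate twist absorbed into $\pi$), using that $\mc F_{P^-}^G(M_{\mf p_L^-}(W),\pi) = \Ind_{P^-}^G(W^{\ast}\otimes\pi)^{\an}$. The maximal parabolic for $M_{\mf p_L^-}(U^{\ast,\iota})$ is $Q^-$. The key structural input is that $M_{\mf p_L^-}(U^{\ast,\iota})$ surjects onto its irreducible quotient $L(U^{\ast,\iota})$ (Proposition~\ref{prop:irred-objects}), so by exactness and contravariance of $\mc F_{P^-}^G(-,-)$ in the first variable, $\mc F_{P^-}^G(L(U^{\ast,\iota}),\pi)$ is a \emph{sub}representation of $\mc F_{P^-}^G(M_{\mf p_L^-}(U^{\ast,\iota}),\pi)$. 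Now $L(U^{\ast,\iota})$ is simple and $\mf p^-$ need not be maximal for it, but $\mf q^-$ is; I would invoke the $PQ$-formula (Remark~\ref{rem:pq-formula}) to rewrite $\mc F_{P^-}^G(L(U^{\ast,\iota}),\pi) \simeq \mc F_{Q^-}^G(L(U^{\ast,\iota}), \Ind_{P^-\cap L_Q}^{L_Q}(\pi)^{\sm})$, and then Theorem~\ref{theo:irreducibility} applies: $L(U^{\ast,\iota})$ is simple with $\mf q^-$ maximal, and $\Ind_{P^-\cap L_Q}^{L_Q}(\pi)^{\sm}$ is irreducible by hypothesis, so this is topologically irreducible. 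This irreducible subrepresentation is the socle (uniqueness/irreducibility of the socle then follows because any other irreducible sub would have to come, again via the exact functor $\mc F$ and the structure of $\mc O^P$, from the unique irreducible quotient $L(U^{\ast,\iota})$ of $M_{\mf p_L^-}(U^{\ast,\iota})$).

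Next I would compute the quotient. We have $\soc_G \Ind_{P^-}^G(U\otimes\pi)^{\an} = \mc F_{P^-}^G(L(U^{\ast,\iota}),\pi)$, so the cokernel is $\mc F_{P^-}^G(M',\pi)$ where $M' = \ker(M_{\mf p_L^-}(U^{\ast,\iota}) \twoheadrightarrow L(U^{\ast,\iota}))$, using exactness of $\mc F_{P^-}^G(-,\pi)$. By Theorem~\ref{theo:constituents} (applied to the opposite parabolic, via the duality of Section~\ref{subsec:duality}, or directly by the strong-linkage analysis), every constituent of $M'$ is of the form $L(\chi')$ with $\chi' \neq \chi$ strongly linked to $\chi$ (with $d\chi' \in \Lambda_{\mf p_L^-}^+$). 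Now I apply $\Hom_G(-,V)$ to $0 \to \soc \to \Ind \to \mc F_{P^-}^G(M',\pi) \to 0$: Breuil's Proposition~\ref{br-adj} identifies $\Hom_G(\mc F_{P^-}^G(M',\pi),V) \simeq \Hom_{(\mf g,P)}(M'^-\otimes \mc C_c^{\sm}(N_P,\pi),V)$, and $\Ext^1$ similarly (or just use that the $\Hom$ out of each constituent vanishes, which is enough for surjectivity of restriction). Since each constituent $L(\chi')^-$ of $M'^-$ has $\chi'\neq\chi$ strongly linked, Lemma~\ref{lemm:gives_old_cond} together with the non-criticality hypothesis forces $\Hom_{(\mf g,P)}(L(\chi')^-\otimes\mc C_c^{\sm}(N_P,\pi),V) = 0$ — exactly as in the proof of Proposition~\ref{prop:iso-and-inj} — hence $\Hom_G(\mc F_{P^-}^G(M',\pi),V) = 0$. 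By the same mechanism as Theorem~\ref{theo:sur} (devissage up the composition series of $M'$, splitting off constituents using Proposition~\ref{prop:be}), the relevant $\Ext^1$ term also vanishes, so the restriction map $\Hom_G(\Ind,V) \to \Hom_G(\soc,V)$ is an isomorphism.

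The main obstacle I anticipate is bookkeeping around duals and the $PQ$-formula: one must be careful that passing to the contragredient $U^{\ast}$ swaps $\mf p$ with $\mf p^-$ and $Q$ with $Q^-$, that the hypothesis is stated with $\pi(\delta_P)$ (so the modulus twists match up with those implicit in $\mc F_{P^-}^G$ and in the definition of non-critical), and that the $PQ$-formula of Orlik--Strauch is applied with the correct Levi-component smooth induction $\Ind_{P^-\cap L_Q}^{L_Q}(\pi)^{\sm}$ — this is precisely why that irreducibility hypothesis appears. A secondary subtlety is verifying that $\soc_G$ really is irreducible (not merely that it \emph{contains} the irreducible $\mc F_{P^-}^G(L(U^{\ast,\iota}),\pi)$); for this one uses that any topologically irreducible subrepresentation of $\mc F_{P^-}^G(M_{\mf p_L^-}(U^{\ast,\iota}),\pi)$ must, by exactness and the structure of submodules of $M_{\mf p_L^-}(U^{\ast,\iota})$ in $\mc O^{P^-}$, be a quotient $\mc F$ of a subquotient, and the only candidate not killed inside the full induction is the one built from $L(U^{\ast,\iota})$. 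Everything else is a routine assembly of results already in the paper.
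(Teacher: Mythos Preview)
Your approach is essentially the same as the paper's: identify the socle as $\mc F_{P^-}^G(L(\chi^{-1}),\pi)$ via the $PQ$-formula and Theorem~\ref{theo:irreducibility}, then transport the restriction map through Breuil's Proposition~\ref{br-adj} to the $(\mf g,P)$-side and invoke the results of Section~\ref{sec:adj-formula}. The paper does one thing more cleanly than you do: rather than framing surjectivity as an $\Ext^1_G$-vanishing (a delicate notion in the category of locally analytic $G$-representations), it simply observes that the naturality of Breuil's isomorphism identifies the restriction map $\Hom_G(\Ind,V)\to\Hom_G(\soc,V)$ \emph{directly} with the map (3a) from the big diagram on page~\pageref{eqn:big_diagram}, and (3a) was already shown to be an isomorphism (Proposition~\ref{prop:iso-and-inj} and Corollary~\ref{coro:final-corollary}). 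This sidesteps any need to speak of $\Ext^1$ at all.

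One small slip: your parenthetical ``or just use that the $\Hom$ out of each constituent vanishes, which is enough for surjectivity of restriction'' is backwards. Vanishing of $\Hom_G$ on the constituents of $Q_0$ gives $\Hom_G(Q_0,V)=0$, which is \emph{injectivity} of the restriction map, not surjectivity; surjectivity is precisely the extension problem that Theorem~\ref{theo:sur} handles. Your main argument correctly points to Theorem~\ref{theo:sur} for this, so the slip is harmless, but you should drop the parenthetical.
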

Note that the non-critical hypothesis is with respect to the pair $(U,\pi(\delta_P))$. This is due to our normalizations (see Section \ref{subsec:notations}) of the Emerton--Jacquet functor and to help remove the twists by $\delta_P^{-1}$ that would appear in each line of the following proof. We comment on the rest of the hypotheses in Theorem \ref{coro:socle} after the proof.
\begin{proof}[Proof of Theorem \ref{coro:socle}]
Write $\chi$ for the highest weight of $U$. Consider the irreducible object $L(\chi^{-1}) \in \mc O^{P^-}$. By \cite[Proposition 3.12(b)]{os} we have
\begin{equation*}
\mc F_{P^-}^G(L(\chi^{-1}),\pi)  = \mc F_{Q^-}^G(L(\chi^{-1}), \Ind_{P^- \cap L_Q}^{L_Q}(\pi)^{\sm})
\end{equation*}
(this is the ``$PQ$''-formula mentioned in Remark \ref{rem:pq-formula}). Since $\Ind_{P^-\cap L_Q}^{L_Q}(\pi)^{\sm}$ is irreducible, Theorem \ref{theo:irreducibility} implies that the right hand side is irreducible.

On the other hand, the exactness of $\mc F_{P^-}^G(-,\pi)$ and the relation between the Orlik--Strauch representations and parabolic induction implies that
\begin{equation*}
\soc_G \Ind_{P^-}^G(U\otimes \pi)^{\an} = \soc_G \mc F_{P^-}^G(M_{\mf p^- _L}(U^{\ast}),\pi) = \soc_G \mc F_{P^-}^G(L(\chi^{-1}),\pi)
\end{equation*}
As the last term is irreducible, we deduce that we have a natural commuting diagram
\begin{equation}\label{eqn:f}
\xymatrix{
\mc{F} _{P^-} ^G(L(\chi ^{-1}), \pi) \ar[d]_-{\simeq} \ar[r] &  \mc{F} _{P^-} ^G(M_{\mf p ^- _L}(U^{\ast}), \pi) \ar[d]^-{\simeq} \\
\soc_G \Ind_{P^-}^G(U \otimes \pi)^{\an} \ar[r] & \Ind_{P^-}^G(U \otimes \pi)^{\an}
}
\end{equation}
where the horizontal arrows are inclusions (this also shows that our assumptions force the locally analytic socle to be irreducible).

By Proposition \ref{prop:poly-internal-dual} (cf.\ its proof) we have a natural isomorphism $M_{\mf p^- _L}(U^{\ast})^- \simeq M_{\mf p _L}(U)^{\vee}$. This also implies that there is a natural isomorphism $L(\chi^{-1})^- \simeq L(\chi)^\vee$ (after taking internal duals both are quotients of $M_{\mf p _L}(U)$) and Proposition \ref{prop:irred-objects}(c) gives a natural isomorphism $L(\chi)^\vee \simeq L(\chi)$. Finally, Proposition \ref{br-adj} implies that the quotient map $M_{\mf p^- _L}(U^{\ast}) \twoheadrightarrow L(\chi^{-1})$ induces a canonical commuting diagram
\begin{equation*}
\xymatrix{
\Hom_G\left(\mc F_{P^-}^G(M_{\mf p^- _L}(U^{\ast}),\pi), V \right)\ar[d] \ar[r]^-{\simeq} & \Hom_{(\mf g,P)}\left(M_{\mf p _L}(U)^{\vee} \otimes \mc C_c^{\sm}(N_P,\pi), V\right) \ar[d]^-{\text{(3a)}}\\
\Hom_G\left(\mc F_{P^-}^G(L(\chi^{-1}),\pi), V \right) \ar[r]^-{\simeq} & \Hom_{(\mf g,P)}\left(L(\chi) \otimes \mc C_c^{\sm}(N_P,\pi), V\right)
}
\end{equation*}
Here we have labeled the right hand vertical arrow as (3a), as that is the same map labeled (3a) in the diagram on page \pageref{eqn:big_diagram}. Recall we have proved that if $(U,\pi(\delta_P))$ is non-critical with respect to $V$ then the map (3a) is an isomorphism; the injectivity was proven in Proposition \ref{prop:iso-and-inj} and surjectivity in Corollary \ref{coro:final-corollary}. We conclude that the restriction map 
\begin{equation*}
\Hom_G\left(\mc F_{P^-}^G(M_{\mf p^- _L}(U^{\ast}),\pi), V \right) \ra \Hom_G\left(\mc F_{P^-}^G(L(\chi^{-1}),\pi), V \right)
\end{equation*}
is an isomorphism. By the diagram \eqref{eqn:f}, the restriction  map
\begin{equation*}
\Hom_G(\Ind_{P^-}^G(U\otimes \pi)^{\an}, V) \ra \Hom_G(\soc_G \Ind_{P^-}^G(U\otimes \pi)^{\an}, V)
\end{equation*}
is also an isomorphism. This concludes the proof.
\end{proof}

\begin{rema}\label{rema:irreducible}
The algebraic assumption on $U$ in Theorem \ref{coro:socle} could be removed, as long as the $\mf l_P$-simple assumption is kept, with a suitable generalization of Proposition \ref{br-adj} to the category $\mc O^P$.
\end{rema}

\begin{rema}\label{rema:irreducibility}
Regarding the hypothesis on $\pi$, it is sufficient, but not necessary, to assume that $\Ind_{P^-}^G(\pi)^{\sm}$ is irreducible. If $\mbf G = {\GL_n}_{/K}$ then a well-known criterion comes out of the Bernstein--Zelevinsky classification (see \cite[Theorem 4.2]{bz}). For example, if $\pi =\theta_1 \otimes \dotsb \otimes \theta_n$ is a smooth character of the diagonal torus $T$ then it is necessary and sufficient to assume that $\theta_i(\varpi_K)/\theta_j(\varpi_K) \neq q$ if $i\neq j$, where $\varpi_K$ is any uniformizer of $K$ and the residue field of $K$ has $q$ elements.
\end{rema}

\end{document}